\theoremstyle{plain}
\newtheorem{satz}{Theorem}[section]
\newtheorem{lem}[satz]{Lemma}
\newtheorem{kor}[satz]{Corollary}
\newtheorem{prop}[satz]{Proposition}
\theoremstyle{definition}
\newtheorem{defn}[satz]{Definition}
\newtheorem{bem}[satz]{Remark}
\newcommand{\R}{\mathbb{R}}
\newcommand{\C}{\mathbb{C}}
\newcommand{\N}{\mathbb{N}}
\newcommand{\CP}{\mathbb{CP}}
\renewcommand{\O}{\mathrm{O}}
\newcommand{\Id}{\mathrm{Id}}
\newcommand{\Ric}{\mathrm{Ric}}
\newcommand{\scal}{\mathrm{scal}}
\newcommand{\vol}{\mathrm{vol}}
\newcommand{\Sym}{\operatorname{Sym}}
\newcommand{\tr}{\operatorname{tr}}
\newcommand{\im}{\operatorname{im}}
\newcommand{\diag}{\operatorname{diag}}
\newcommand{\End}{\operatorname{End}}
\newcommand{\Hom}{\operatorname{Hom}}
\newcommand{\Ad}{\operatorname{Ad}}
\newcommand{\ad}{\operatorname{ad}}
\newcommand{\Cas}{\mathrm{Cas}}
\newcommand{\Sy}{\mathscr{S}}
\newcommand{\X}{\mathfrak{X}}
\newcommand{\U}{\operatorname{U}}
\newcommand{\SU}{\operatorname{SU}}
\newcommand{\su}{\mathfrak{su}}
\renewcommand{\u}{\mathfrak{u}}
\newcommand{\SO}{\operatorname{SO}}
\newcommand{\so}{\mathfrak{so}}
\newcommand{\Sp}{\operatorname{Sp}}
\newcommand{\GL}{\operatorname{GL}}
\newcommand{\gl}{\mathfrak{gl}}
\newcommand{\sl}{\mathfrak{sl}}
\renewcommand{\sl}{\mathfrak{sl}}
\newcommand{\spann}{\operatorname{span}}
\newcommand{\pr}{\operatorname{pr}}
\renewcommand{\i}{\mathrm{i}}
\newcommand{\m}{\mathfrak{m}}
\newcommand{\h}{\mathfrak{h}}
\newcommand{\g}{\mathfrak{g}}
\renewcommand{\k}{\mathfrak{k}}
\renewcommand{\t}{\mathfrak{t}}
\DeclareMathOperator*{\closedsum}{\overline{\bigoplus}}
\newcommand{\isom}{\mathfrak{iso}}
\newcommand{\bbE}{\mathbb{E}}
\newcommand{\curvop}{\mathcal{R}}
\newcommand{\bbF}{\mathbb{F}}
\newcommand{\Ltwoinprod}[2]{\mathchoice
  {\big(#1,#2\big)_{L^2}}%
  {(#1,#2)_{L^2}}%
  {(#1,#2)_{L^2}}%
  {(#1,#2)_{L^2}}%
}
\newcommand{\prim}{0}
\newcommand{\curvendo}{\mathcal{K}(R)}
\newcommand{\LL}{\Delta_{\mathrm{L}}}
\newcommand{\quadric}{\mathscr{Q}}
\newcommand{\intprod}{\mathbin{\lrcorner}}
\newcommand{\Eop}{\mathrm{E}}
\newcommand{\rmE}{\mathrm{E}}
\newcommand{\rmF}{\mathrm{F}}
\newcommand{\rmS}{\mathrm{S}}
\title{\rmfamily On the rigidity of the complex Grassmannians}
\author{Paul Schwahn\footnote{Laboratoire de Math\'ematiques d'Orsay, Universit\'e Paris-Saclay, Rue Michel Magat 307, 91405 Orsay Cedex, France}, Uwe Semmelmann\footnote{Institut f\"ur Geometrie und Topologie, Fachbereich Mathematik, Universit\"at Stuttgart, Pfaffenwaldring 57, 70569 Stuttgart, Germany.}}
\date{\today}
\begin{document}

\maketitle

\begin{abstract}
\noindent
We study the integrability to second order of the infinitesimal Einstein de\-for\-ma\-tions of the symmetric metric $g$ on the complex Grassmannian of $k$-planes inside $\C^n$. By showing the nonvanishing of Koiso's obstruction polynomial, we characterize the infinitesimal deformations that are integrable to second order as an explicit variety inside $\su(n)$. In particular we show that $g$ is isolated in the moduli space of Einstein metrics if $n$ is odd.

\medskip

\noindent{\textit{Mathematics Subject Classification} (2020): 32Q20, 53C24, 53C25}

\medskip

\noindent{\textit{Keywords}: Einstein deformations, Rigidity, Kähler-Einstein manifolds, Complex Grassmannians}
\end{abstract}

\section{Introduction}
\label{sec:intro}

Let $g$ be an Einstein metric on a closed manifold $M^m$, that is $\Ric_g = E g$, where $E= \frac{\scal_g}{m}$ is the Einstein constant. An important problem is to decide whether $g$ is \emph{rigid}, i.e.~whether the equivalence class $[g]$ under homothetic scaling and pullback by diffeomorphisms is isolated in the moduli space of Einstein structures. By a deep result of Koiso \cite[Thm.~3.2]{Koiso83}, $g$ is not rigid if and only if there exists an analytic curve $(g_t)$ of Einstein metrics through $g=g_0$ that are not all equivalent. In fact, this curve may be taken to consist of unit volume metrics and to be transversal to  orbits of the diffeomorphism group. Thus one may assume the tangent vector $h=\dot g_0$ to be a so-called \emph{tt}-tensor, i.e.~$h$ is \textbf{t}raceless ($\tr h=0$) and \textbf{t}ransverse (divergence-free, $\delta h=0$). The symmetric $2$-tensor $h=\dot g_0$ then satisfies the linearized Einstein equation $\LL h = 2E h$, where $\LL$ is the Lichnerowicz Laplacian. The solutions to this equation which are in addition tt-tensors are called \emph{infinitesimal Einstein deformations}. The finite-dimensional space of infinitesimal Einstein deformations of $g$ is denoted by $\varepsilon (g)$. Canonically, the isometric group of $(M,g)$ acts on $\varepsilon(g)$. For a comprehensive exposition, we refer the reader to the monograph of Besse \cite[\S12]{besse}.

It follows immediately from Koiso's results that $g$ is rigid if $\varepsilon(g)$ is trivial. However the converse is in general false, as illustrated by the symmetric metric on $\CP^{2k}\times\CP^1$ \cite{Koiso82}. The question is thus: given  $h \in \varepsilon(g)$, does there exists a curve of Einstein metrics $(g_t)$ with $h=\dot g_0$? If this is the case, the infinitesimal deformation $h$ is called \emph{integrable}.

The main tool for studying this question is the Einstein operator
\[\Eop(g) : = \Ric_g - \frac{\int_M \scal_g \vol_g}{m} g,\]
where the volume  of $g$ is normalized to one. If $(g_t)$ is an analytic curve of Einstein metrics of volume one, then $\Eop(g_t)$ and all its derivatives have to vanish at $t=0$. This gives a sequence of obstructions for an arbitrary curve through $g$ to be a curve of Einstein metrics. An element $h \in \varepsilon (g)$ is called \emph{integrable
to order $k$} there exists symmetric $2$-tensors $h_2,\ldots,h_k$ such that for the curve $(g_t)$ defined by
\[g_t=g+th+\sum_{j=2}^k\frac{t^j}{j!}h_j,\]
the first $k$ derivatives of $\Eop(g_t)$ vanish at $t=0$, i.e.
\[
 \frac{d^j}{d  t^j}\Big|_{t=0} \Eop(g_t)  = 0
 \qquad
 \text{for } j=1, \ldots , k.
\]
An element $h \in \varepsilon (g)$ is called \emph{formally integrable} if it is integrable to all orders, that is, if there is a formal power series solution of $E(g_t)=0$. Koiso showed that this is in fact equivalent to integrability, i.e.~the existence of an actual curve of Einstein metrics through $g$ with first order jet $h$. In particular, if $h$ is not integrable to order $k$ for some $k\in\N$, then there does not exist a nontrivial curve of Einstein metrics through $g$ and thus the metric $g$ is rigid.

By definition, any $h \in \varepsilon(g)$ is integrable to first order, i.e.~the curve $g_t = g + th$ solves the Einstein equation to first order in the sense that the first derivative of $\Eop(g_t)$ vanishes at $t=0$. The next step is to study the question whether a given $h\in \varepsilon (g)$ is
integrable to second order. For this, one has to find a symmetric tensor $h_2$ such that the curve $g_t= g + t h + \frac{t^2}{2} h_2$ satisfies
\begin{equation}
\label{2ndobstr}
 0 = \frac{d^2}{d  t^2}\Big|_{t=0} \Eop(g_t) = \Eop''_g(h,h) + \Eop'_g(h_2),
\end{equation}
that is, $\Eop''_g(h,h) \in \im\Eop'_g$. Koiso showed that this is equivalent to $\Eop''_g(h,h) $
being $L^2$-orthogonal to the space $\varepsilon (g)$ itself \cite[Prop.~3.2]{Koiso82}. Hence $h \in \varepsilon (g)$ is integrable to second order if and only if
$\Psi(h,h,k) = 0$ for all $k \in \varepsilon (g)$, where the trilinear form $\Psi $ is defined  by
\[
\Psi(h_1, h_2, h_3) :=  \Ltwoinprod{\Eop''_g(h_1, h_2)}{ h_3}\qquad
 \text{for } h_1, h_2, h_3 \in \varepsilon (g).
\]
Thus a necessary condition for integrability to second order for an $h\in\varepsilon(g)$ is the vanishing of the \emph{Koiso obstruction} $\Psi(h) = \Psi(h,h,h)$. Koiso gives an explicit formula for $\Psi(h)$, which is a complicated combination of second order derivatives of $h$ \cite[Lem.~4.3]{Koiso82}, which is in general rather difficult to compute.

In \cite[Thm.~1.1]{NS23} an explicit formula for the full obstruction form $\Psi(h_1, h_2, h_3)$ is given in terms of the Frölicher-Nijenhuis bracket in a coordinate-free way. For the special case $h_1=h_2=h_3=h$ it recovers the Koiso obstruction $\Psi(h)$. In particular, it turns out that $\Psi$ is symmetric in all 3 arguments. However, it can be shown a priori that on $\varepsilon(g)$, the form $\Psi$ coincides with the third variation of the Einstein--Hilbert functional at $g$ and is thus completely symmetric. Hence the new formula for $\Psi(h_1,h_2,h_3)$ can also be checked by reformulating the Koiso obstruction and then polarizing.

Let $(M, g, J)$ be a compact Kähler--Einstein manifold with Kähler form $\omega$. In this case, a further simplification of the expression for $\Psi$ is possible. Identifying symmetric $2$-tensors with symmetric endomorphisms of $TM$ via the metric, one has a splitting of $\Sym^2T^\ast M$ into endomorphisms commuting resp.~anti-commuting with $J$, and a corresponding splitting
\[\varepsilon(g) = \varepsilon^+(g) \oplus \varepsilon^-(g).\]
Koiso showed in \cite{Koiso83} that
\[
\varepsilon^+(g) = \{ F\circ J \,|\, F \in \Omega^{1,1}_0(M) \cap \ker(\Delta - 2E) \cap \ker d^\ast \} ,
\]
 where $\Omega^{1,1}_0(M) $ is the space of primitive $(1,1)$-forms and $\Delta = d d^* + d^*d$ is the Hodge Laplacian.
 Now \cite[Thm.~1.4]{NS23} gives the following criterion:\footnote{The original version contained a numerical error: the factor $8E$ originally read $\frac{17E}{2}$.}
\begin{prop}
\label{intkaehler}
 Assume that $(M,g,J)$ is a compact Kähler--Einstein manifold with $\varepsilon^-(g)=0$. Then
 $h = F \circ J \in \varepsilon(g)$ is integrable to second order if and only if
\[
 \Ltwoinprod{\omega \wedge dG}{ F \wedge dF } + \Ltwoinprod{\omega \wedge dF}{F \wedge dG + G \wedge dF } = 8E\Ltwoinprod{F^2 \circ J}{ G}
\]
holds for all elements $G\circ J \in  \varepsilon^+(g) $. In particular, the Koiso obstruction for $h = F \circ J $ can be written as
\begin{equation}
\label{psikaehler}
\Psi(h) = 6 \Ltwoinprod{\omega \wedge dF}{F \wedge dF } -8E\Ltwoinprod{F \wedge F}{ F \wedge \omega}.
\end{equation}
\end{prop}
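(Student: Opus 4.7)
The plan is to specialize the general coordinate-free formula of \cite[Thm.~1.4]{NS23} for the trilinear form $\Psi(h_1, h_2, h_3)$ to the Kähler--Einstein setting with $h_i = F_i \circ J$. Since the hypothesis $\varepsilon^-(g) = 0$ together with Koiso's description of $\varepsilon^+(g)$ identifies $\varepsilon(g)$ entirely with $\{F \circ J : F \in \Omega^{1,1}_0(M) \cap \ker d^\ast \cap \ker(\Delta - 2E)\}$, integrability to second order of $h = F \circ J$ is equivalent to $\Psi(h, h, G \circ J) = 0$ for all admissible $G$. Thus the first assertion will follow once the bilinear (in $F$ and $G$) expression $\Psi(h,h,G\circ J)$ has been computed and identified with the displayed identity.

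To carry out the evaluation, I substitute $h_i = F_i \circ J$ into the NS23 formula. The Frölicher--Nijenhuis brackets of endomorphism-valued 1-forms of the form $F_i \circ J$ expand via the Leibniz rule; combined with $\nabla J = 0$ and the primitivity of the $F_i$, they produce the three wedge-product pairings $\Ltwoinprod{\omega \wedge dG}{F \wedge dF}$ and $\Ltwoinprod{\omega \wedge dF}{F \wedge dG + G \wedge dF}$, whose relative coefficients are forced by the symmetry in the first two slots. The curvature term in the NS23 formula, which a priori is a complicated contraction of the Riemann tensor with the $h_i$, collapses on a Kähler--Einstein manifold---using $\Ric = E g$ together with the standard commutation of $\curvop$ with $J$ on $(1,1)$-forms---to a single term proportional to $\Ltwoinprod{F^2 \circ J}{G}$ with coefficient $8E$. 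This yields the first identity.

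For the Koiso obstruction formula \eqref{psikaehler} I polarize by setting $G = F$. The left-hand side then collapses to $3 \Ltwoinprod{\omega \wedge dF}{F \wedge dF}$, while the right-hand side becomes $8E \Ltwoinprod{F^2 \circ J}{F}$. The factor of $2$ relating $\Psi(h,h,h)$ to the bilinear version $\Psi(h,h,\cdot)$ (inherent in the normalization of the obstruction) converts $3$ into $6$, and an elementary identity $2\Ltwoinprod{F^2 \circ J}{F} = \Ltwoinprod{F \wedge F}{F \wedge \omega}$ for primitive $(1,1)$-forms on a Kähler manifold rewrites the curvature pairing in the displayed form. The main obstacle will be the second step: keeping careful track of signs and numerical coefficients when unfolding the Frölicher--Nijenhuis bracket and the curvature contraction---this is precisely the place where the error flagged in the footnote has to be reworked to obtain the corrected coefficient $8E$ in place of $\tfrac{17E}{2}$.
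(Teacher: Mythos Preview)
The paper does not prove this proposition: it is quoted (with a corrected coefficient, as the footnote indicates) from \cite[Thm.~1.4]{NS23}. So there is no ``paper's own proof'' to compare against; what you have written is essentially a sketch of how one would recover the NS23 result from their more general Theorem~1.1, which is indeed the natural route.

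That said, your proposal is a plan rather than a proof. The substantive step---substituting $h_i=F_i\circ J$ into the general Fr\"olicher--Nijenhuis formula and tracking how the bracket and curvature terms collapse---is asserted but not carried out, and you yourself flag it as ``the main obstacle''. Two specific points deserve care. First, your justification for the factor~$2$ turning $3$ into $6$ is muddled: there is no extra normalization between $\Psi(h,h,h)$ and $\Psi(h,h,\cdot)$. The correct reading is that the first displayed equation is the \emph{vanishing condition} $\Psi(h,h,G\circ J)=0$, and any overall nonzero constant may be divided out; the formula \eqref{psikaehler} then gives the actual value of $\Psi(h)$, and the $6$ versus $3$ simply reflects that the condition was stated after dividing by $2$. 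Second, the algebraic identity $2\langle F^2\circ J,F\rangle=\langle F\wedge F,F\wedge\omega\rangle$ is correct (it is the special case $\alpha=\beta=\gamma=F$ of Lemma~\ref{tripleprod}), so that part is fine.
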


In \cite{Koiso80} Koiso already studied the case of compact symmetric spaces $M=G/K$ and arrived at the following list of infinitesimally deformable spaces:

\begin{prop}
\label{koisoied}
Only the following irreducible symmetric spaces of compact type admit infinitesimal Einstein deformations:
\begin{enumerate}[\upshape(i)]
\item
$M= \SU(n)$ \quad with $ \; n\ge 3$,
\item
$M= \SU(n)/\SO(n)$ \quad with $  \; n\ge 3$,
\item
$M= \SU(2n)/\Sp(n)$ \quad with $  \; n\ge 3$,
\item
$M=\SU(p+q)/\rmS(\U(p) \times \U(q))$  \quad with $  \; p,q \ge 2$,
\item
$M=\rmE_6/\rmF_4$.
\end{enumerate}
\end{prop}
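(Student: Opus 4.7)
The plan is to retrace Koiso's representation-theoretic argument from \cite{Koiso80}. For a compact irreducible symmetric space $M = G/K$ with Cartan decomposition $\g = \k \oplus \m$, the bundle of symmetric $2$-tensors is $G \times_K \Sym^2 \m^\ast$. Peter--Weyl together with Frobenius reciprocity yields the $G$-equivariant decomposition
\[
L^2(\Sym^2 T^\ast M) \;\cong\; \closedsum_{\lambda \in \hat G} V_\lambda \otimes \Hom_K(V_\lambda, \Sym^2 \m^\ast),
\]
and on a symmetric space of compact type the Lichnerowicz Laplacian acts on the $\lambda$-isotypical component as multiplication by the Casimir eigenvalue $\Cas(\lambda) = \langle \lambda, \lambda + 2\rho \rangle$, evaluated in the normalization dual to the symmetric metric. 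Thus locating $\varepsilon(g)$ reduces to listing those pairs $(\lambda, \phi)$ with $\phi \in \Hom_K(V_\lambda, \Sym^2 \m^\ast)$ nonzero, $\Cas(\lambda) = 2E$, and $\phi$ landing in the tt-part of $\Sym^2 \m^\ast$.

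First I would dispose of the tt-constraint by decomposing $\Sym^2 \m^\ast = \R \oplus \Sym^2_0 \m^\ast$ as a $K$-module and observing that the divergence on a symmetric space is encoded, via the Cartan relation $[\m, \m] \subset \k$, by an explicit $K$-equivariant linear map into $\m^\ast$. Its kernel picks out the $K$-submodule of $\Sym^2 \m^\ast$ contributing to $\varepsilon(g)$. The trivial $G$-representation contributes only the metric itself, so one may restrict attention to nontrivial $\lambda$.

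The main step is then the Casimir bound: the equation $\Cas(\lambda) = 2E$ together with the requirement that $V_\lambda|_K$ share an irreducible component with the finite-dimensional $K$-module $\Sym^2_{\mathrm{tt}} \m^\ast$ restricts $\lambda$ to a short list of candidates, since $\Cas(\lambda)$ grows quadratically in the norm of $\lambda$ while the smallest $K$-highest weights occurring in $\Sym^2 \m^\ast$ force a lower bound via branching. I would then run through Cartan's classification of irreducible symmetric spaces of compact type --- both group-type spaces $H = H \times H / H_{\mathrm{diag}}$ and the classical and exceptional Type II families --- computing the relevant Casimir values and applying standard branching rules such as $\SU(n) \downarrow \SO(n)$, $\SU(2n) \downarrow \Sp(n)$, $\SU(p+q) \downarrow \rmS(\U(p) \times \U(q))$ for the classical non-group cases and $\rmE_6 \downarrow \rmF_4$ for the exceptional one, to determine in which cases a nontrivial solution survives.

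The main obstacle is the exhaustiveness of this case-by-case elimination: outside the five listed families, one must verify in every remaining family that no dominant weight simultaneously solves the Casimir equation and admits a nonzero equivariant map into the tt-part of $\Sym^2 \m^\ast$. The sharpest control is needed for the classical $B$, $C$, $D$ series and their Grassmannian quotients, where several candidate weights formally satisfy the Casimir equation and must be eliminated via branching or the tt-condition. Once this bookkeeping is complete, only the spaces of the statement survive.
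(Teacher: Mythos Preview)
The paper does not prove this proposition at all: it is quoted verbatim as Koiso's result from \cite{Koiso80} and used only as background motivation for which symmetric spaces to study. So there is no proof in the paper to compare your proposal against.

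Your outline is a reasonable sketch of how Koiso's original argument proceeds, but you are missing the key simplification that makes the case analysis tractable. With the standard metric normalized so that $E=\tfrac12$ (Proposition~\ref{einsteinconst}), the Lichnerowicz Laplacian on a symmetric space coincides with $\Cas^{\g,-B_\g}$, and the equation $\LL h = 2E h$ becomes $\Cas^{\g,-B_\g}_\lambda = 1$. By Proposition~\ref{casadjoint} this is the Casimir eigenvalue of the adjoint representation, and Koiso shows (cf.~\cite[Lem.~5.2]{Koiso82}, also cited later in the present paper) that for an irreducible symmetric space of compact type the adjoint representation is the \emph{only} Fourier mode with this eigenvalue. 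Thus one does not need to bound a short list of candidate weights $\lambda$ via growth estimates and then eliminate them one by one as you suggest; the problem immediately reduces to computing the single space $\Hom_K(\g,\Sym^2_0\m)$ and checking when the tt-condition leaves a nonzero subspace. Your ``main obstacle'' of exhaustively ruling out spurious weights in the $B$, $C$, $D$ families therefore does not arise.
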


In all of these cases it turned out that $\varepsilon(g) \cong \g$ as a $G$-module, where $\g \cong \isom(M,g)$ is the Lie algebra of $G$, which is also the isometry group of $(M,g)$. Moreover, in these cases the Koiso obstruction can be viewed as an invariant cubic
polynomial on $\g$, i.e. as an element of $(\Sym^3 \g^\ast)^G$.

Koiso already showed that all infinitesimal Einstein deformations on $M=E_6/F_4$ are unobstructed to second order simply because in this case $(\Sym^3 \g^\ast)^G=0$. The first substantial progress on the rigidity problem for compact symmetric spaces was recently obtained in \cite{BHMW}. The authors show that the  bi-invariant metric on $\SU(n)$ is rigid for $n$ odd, thereby exhibiting the first example of a rigid Einstein metric on an irreducible manifold. In addition, for $n$ even they describe the variety of infinitesimal Einstein deformations integrable to second order.

In this article we will study the complex Grassmannians $M=\SU(p+q)/\rmS(\U(p) \times \U(q))$, with $ p,q \ge 2$. These are the only Hermitian symmetric spaces in the list above. Moreover, all other spaces in the list admit an invariant symmetric $3$-tensor \cite[Prop.~2.1]{GGiso}, which can be used to give an explicit parametrization of the space $\varepsilon(g)$ by Killing vector fields through a bundle morphism \cite[Lem.~2.8]{BHMW}. For the Grassmannians we need another way to make the isomorphism $\g\cong\varepsilon(g)$ explicit. Extending and generalizing the approach of \cite{NS23}, we construct it as a first-order differential operator instead of a bundle map. The construction in \cite{NS23} relies on the quaternionic Kähler structure of the two-plane Grassmannians, however our approach shows that this is not a crucial ingredient.

We note that the complex Grassmannians satisfy the assumptions of Proposition~\ref{intkaehler} since the space $\varepsilon^-(g)$ vanishes, as will become evident later. This enables us to make use of the simpler form \eqref{psikaehler} for the obstruction polynomial.

In \cite[Thm.~1.5]{NS23} it is shown that the symmetric metric on the $2$-plane Grassmannian $\SU(n+2)/\rmS(\U(2)\times\U(n))$ is rigid for $n$ odd. More generally the set of infinitesimal Einstein deformations integrable to second order is explicitly
characterized as an algebraic subset of $\g = \su(n+2)$. We prove a similar statement for all Grassmannians, simplifying the argument, and recovering the results of \cite{NS23} in the special case of the $2$-plane Grassmannian. Our main result is the following.

\begin{satz}
\label{main}
Let $M = \SU(n)/\rmS(\U(n_+) \times \U(n_-))$, $n=n_+ + n_-$ with $n_+,n_-\ge 2$, the complex Grassmannian with its Hermitian symmetric
structure $(M,g,J)$. Then the following holds:
\begin{enumerate}[\upshape(i)]
\item
The set of infinitesimal Einstein deformations in $\varepsilon(g)$ which are integrable to second order is isomorphic to the
variety
\[
\quadric = \left\{X \in \mathfrak{su}(n) \, \middle| \, X^2 = \frac{\tr(X^2)}{n} I_n  \right\}.
\]
\item
If $n$ is odd, then all infinitesimal Einstein deformations are obstructed to second order. Thus the metric $g$ is rigid.
\end{enumerate}
\end{satz}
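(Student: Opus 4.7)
The plan is to compute the integrability criterion of Proposition~\ref{intkaehler} explicitly under an identification $\varepsilon(g)\cong\su(n)$, and then recognize the resulting algebraic condition as cutting out the variety $\quadric$.

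First I would realize $\varepsilon(g)\cong\su(n)$ by an explicit $G$-equivariant construction: for $X\in\su(n)$, let $\widetilde X$ denote the induced Killing vector field on $M$ and $f_X$ its Killing potential, i.e.\ the function with $\widetilde X=J\grad f_X$ and $\Delta f_X=2Ef_X$ (which exists since $M$ is a simply connected compact Kähler--Einstein manifold of positive scalar curvature). Let $F_X$ be the primitive projection of $\i\,\partial\bar\partial f_X$---equivalently, a constant multiple of the primitive $(1,1)$-part of $d\widetilde X^\flat$ since $\widetilde X$ preserves $J$. Then $F_X\in\Omega^{1,1}_0(M)\cap\ker(\Delta-2E)\cap\ker d^\ast$, so Koiso's characterization recalled before Proposition~\ref{intkaehler} places $h_X:=F_X\circ J$ inside $\varepsilon^+(g)$. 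Since $X\mapsto h_X$ is a nonzero $G$-equivariant linear map (verified on a single $X$) and $\su(n)$ is an irreducible $G$-module isomorphic to $\varepsilon(g)$ (by the remark after Proposition~\ref{koisoied}), Schur's lemma yields that $X\mapsto h_X$ is a linear isomorphism onto $\varepsilon^+(g)=\varepsilon(g)$; in particular $\varepsilon^-(g)=0$, so Proposition~\ref{intkaehler} applies.

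Next I would use representation theory to pin down $\Psi$. The trilinear form $\Psi$ on $\varepsilon(g)^{\otimes 3}\cong\su(n)^{\otimes 3}$ is $G$-invariant and symmetric, so it corresponds to an element of $(\Sym^3\su(n)^\ast)^{\SU(n)}$; for $n\ge 3$ this space is one-dimensional, spanned by $X\mapsto\tr(X^3)$. Hence, provided the associated scalar is nonzero, the polarized integrability condition $\Psi(h_X,h_X,h_Y)=0$ for all $Y\in\su(n)$ reduces to
\[
\tr(X^2 Y)=0\quad\text{for all }Y\in\su(n).
\]
Since the annihilator of $\su(n)$ in $\gl(n,\C)$ under the trace pairing is $\C\cdot I_n$, this is equivalent to $X^2=\tfrac{\tr(X^2)}{n}I_n$, i.e.\ $X\in\quadric$. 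Completing (i) therefore reduces to verifying that $\Psi$ is not identically zero; this can be done by evaluating the cubic \eqref{psikaehler} on a single well-chosen $X\in\su(n)$ with $\tr(X^3)\neq 0$ and showing the result is nonzero.

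For part (ii): if $X\in\quadric$ then $X^2=cI_n$ with $c\in\R$, and $X$ being skew-Hermitian forces $c\le 0$; writing $c=-\mu^2$ the eigenvalues of $X$ are $\pm\i\mu$, and $\tr X=0$ forces them to occur with equal multiplicity, so $n$ must be even unless $\mu=0$. For $n$ odd, therefore $\quadric=\{0\}$, every nonzero infinitesimal Einstein deformation is obstructed to second order, and rigidity of $g$ follows from Koiso's theorem as recalled in the introduction.

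The main obstacle I expect is the nonvanishing step: Schur's lemma reduces the full obstruction to a single scalar, but computing that scalar from \eqref{psikaehler} requires evaluating $L^2$-integrals of wedges of $(1,1)$-forms built from $f_X$, with careful bookkeeping against the Grassmannian's curvature and the Einstein constant. Once one nonzero value of $\Psi(h_X)$ is established, the rest of the argument is formal representation theory and a short linear-algebra step.
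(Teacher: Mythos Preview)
Your high-level architecture matches the paper's: reduce everything to the one-dimensionality of $(\Sym^3\su(n)^\ast)^{\SU(n)}$, so that (i) becomes ``$\Psi\neq 0$'' and (ii) follows from elementary linear algebra on $\quadric$. Where the proposal breaks down is the parametrization step, and this is not a minor bookkeeping issue---it is exactly the content that the paper has to develop.

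Your $F_X$ (the primitive part of $i\partial\bar\partial f_X$, equivalently of $d\tilde X^\flat$) is \emph{not} coclosed, so $h_X=F_X\circ J$ does not lie in $\varepsilon^+(g)$ and Proposition~\ref{intkaehler} cannot be invoked. Concretely, on the Grassmannian the holonomy gives a parallel splitting $\Lambda^{1,1}_0T^\ast M=\bbE_+\oplus\bbE_-\oplus\bbF$ with fibres $\su(n_+)$, $\su(n_-)$ and a complement; for a Killing field one has $(dX)_0=(dX)_++(dX)_-$ and
\[
d^\ast(dX)_\pm=\frac{n_\pm^2-1}{n_\pm n}\,X,
\]
so $d^\ast(dX)_0$ is a strictly positive multiple of $X$. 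The space $\Hom_K(\g,\Lambda^{1,1}_0\m)$ is two-dimensional, and only one line in it produces coclosed tensors---namely
\[
e_X=\frac{n_-^2-1}{n_-}(dX)_+-\frac{n_+^2-1}{n_+}(dX)_-.
\]
Schur's lemma cannot pick this line out for you; it only tells you that any nonzero equivariant map from $\su(n)$ into sections of $\Lambda^{1,1}_0T^\ast M$ is injective, not that its image is $\varepsilon(g)$. Isolating the correct combination requires the curvature/Casimir computations that form the core of \S\ref{sec:fibrewise}--\S\ref{sec:ied}.

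On the nonvanishing: the paper does not evaluate \eqref{psikaehler} at a single $X$. Instead it computes $\Psi$ globally on $\isom(M,g)$ and expresses it as an explicit nonzero multiple of the invariant cubic $\mu(X)=\int_M z_X^3$ (for $n_+\neq n_-$) or of $\nu_+(X)=\int_M|(dX)_+|^2z_X$ (for $n_+=n_-$), and then proves separately that these cubics do not vanish (the case $n_+=n_-$ being delicate, since $\mu\equiv 0$ there). Your ``single $X$'' approach is not wrong in principle, but carrying it out still forces you through the same fibrewise identities, because $e_X$ and $de_X=X\intprod\Omega$ are built from $(dX)_\pm$ and a parallel $4$-form $\Omega$ that only becomes tractable after the $\bbE_\pm$ machinery is in place.
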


The structure of the article is as follows. \S\ref{sec:grassmann} recalls some basic facts about the structure of the complex Grassmannians as a homogeneous space and introduces some notation. In \S\ref{sec:fibrewise} we shall discuss the required fibrewise machinery, that is, considerations of representation theory and curvature, and some auxiliary invariant objects are introduced. After a brief discussion of Killing fields and potentials in \S\ref{sec:diff} together with some differential identities, we explicity parametrize the infinitesimal Einstein deformations by Killing vector fields in \S\ref{sec:ied}.

Before we can take on the Koiso obstruction polynomial $\Psi$, we collect some of the properties of and identities between invariant cubic forms on $\su(n)$ resp.~on Killing fields in \S\ref{sec:invforms}. Finally, in \S\ref{sec:obstruction}, we express $\Psi$ in terms of previously defined polynomials and prove Theorem~\ref{main}.

Returning to the list in Proposition~\ref{koisoied}, we note that the only remaining spaces where one can expect to show rigidity are $\SU(n)/\SO(n)$ with $n\geq3$ odd. Indeed, for $G=\SU(n)$ with $n$ even, Propositions~\ref{invcubic} and \ref{variety} show that there always exist infinitesimal Einstein deformations that are integrable to second order, regardless of whether the obstruction polynomial $\Psi$ vanishes identically or not. Therefore, in order to investigate the rigidity of the remaining symmetric spaces, one would have to take into account at least the \emph{third order obstruction}. That is, given an infinitesimal deformation $h\in\varepsilon(g)$ together with a symmetric tensor $h_2$ satisfying the second order condition \ref{2ndobstr}, by differentiating $\Eop(g_t)$ thrice and applying Koiso's orthogonality argument \cite[Prop.~3.2]{Koiso82}, one needs to check whether
\[\Eop'''_g(h,h,h)+3\Eop''_g(h,h_2)\perp\varepsilon(g).\]
A closed formula for $\Eop'''_g$, which would involve the third variation of the Ricci tensor, is however currently lacking.

\section{The structure of the complex Grassmannians}
\label{sec:grassmann}

Let $n_+,n_-\geq 2$ and $n=n_++n_-$. We consider in this article the complex Grassmannians
\[M=G/K=\frac{\SU(n)}{\rmS(\U(n_+)\times\U(n_-))},\]
a symmetric space of real dimension $2n_+n_-$. Denoting with $\g$ and $\k$ the Lie algebras of $G$ and $K$, respectively, we have a reductive decomposition $\g=\k\oplus\m$ that satisfies the Cartan relation $[\m,\m]\subset\k$. The isotropy representation $\m$, canonically identified with the tangent space $T_oM$ at the identity coset, is as a complex representation of $K$ isomorphic to $\C^{n_+}\otimes_\C(\C^{n_-})^\ast$, where $\C^k$ denotes the definining representation of $\U(k)$.

Equipped with the standard metric $g$ induced by the negative of the Killing form (see~\S\ref{sec:casimir}) and the canonical complex structure $J$ corresponding to multiplication with $\i$ on $\m$, the manifold $(M,g,J)$ is an irreducible Hermitian symmetric space, and in particular Kähler--Einstein of positive scalar curvature.

Since $(M,g)$ is a symmetric space, the Lie algebra of Killing vector fields $\isom(M,g)$ is isomorphic to $\g$. The isomorphism is explicitly given by taking \emph{fundamental vector fields}, that is
\[\g\ni X\longmapsto\tilde X,\qquad \tilde X_p:=\frac{d}{dt}\Big|_{t=0}\exp(tX)p.\]

We frequently and tacitly identify $2$-forms $\alpha\in\Lambda^2T^\ast_pM$ via the metric with skew-symmetric endomorphisms $\alpha\in\so(T_pM)$, and we do not notiationally distinguish between those two. Concretely, the identification is given by
\[g(\alpha X,Y):=\alpha(X,Y),\qquad X,Y\in T_pM.\]
In particular, $J$ is identified with the Kähler form $\omega$.

Let $\Lambda^{1,1}T^\ast M$ be the (real) subbundle of $(1,1)$-forms, i.e.~$2$-forms commuting with $J$, and let
\[\Lambda^{1,1}_0T^\ast M=\{\alpha\in\Lambda^{1,1}T^\ast M\,|\,\tr(\alpha J)=0\}\]
be the subbundle of \emph{primitive} $(1,1)$-forms. We denote with $\alpha_0$ the projection to $\Lambda^{1,1}_0T^\ast M$ of any $\alpha\in\Lambda^{1,1}T^\ast M$. Composition with the endomorphism $J$ yields a parallel bundle isomorphism
\[\Lambda^{1,1}T^\ast M\stackrel{\sim}{\longrightarrow}\Sym^+T^\ast M:\quad \alpha\longmapsto\alpha J\]
with the bundle of symmetric $2$-tensors commuting with $J$.

The bundle of $(1,1)$-forms is a homogeneous vector bundle with fiber $\Lambda^{1,1}\m\cong\u(\m)$. Via the infinitesimal isotropy representation, the Lie algebra $\k\cong\R\oplus\su(n_+)\oplus\su(n_-)$ embeds into $\Lambda^{1,1}\m$, thus defining a parallel splitting
\[\Lambda^{1,1}_0T^\ast M=\bbE_+\oplus\bbE_-\oplus\bbF\]
where $\bbE_\pm$ have fiber $\su(n_\pm)$ and $\bbF:=(\bbE_+\oplus\bbE_-)^\perp$. For $\alpha\in\Lambda^{1,1}T^\ast M$, we denote with $\alpha_\pm$ the projections to the subbundles $\bbE_\pm$, respectively.

\section{Fibrewise identities}
\label{sec:fibrewise}

In order to perform calculations on the complex Grassmannians, we need a bit of preparation. In particular, we will employ Casimir operators on representations of $K$, the eigenvalues of the curvature operator on $M$, commutator and anticommutator relations for endomorphisms of $TM$, and some auxiliary objects that shall be explained later.

\subsection{Casimir operators}
\label{sec:casimir}


We recall a few key facts about Casimir operators and record a few relations between different normalizations.

Let $\g$ be a compact real Lie algebra equipped with an invariant inner product $Q$. For any representation $\rho_\ast: \g\to\gl(V)$, its \emph{Casimir operator} (with respect to $Q$) is an equivariant endomorphism $\Cas^{\g,Q}_V\in\End_\g V$ defined as
\[\Cas^{\g,Q}_V:=-\sum_i\rho_\ast(X_i)^2,\]
where $(X_i)$ is any $Q$-orthonormal basis of $\g$.

If $V_\gamma$ is an irreducible representation of $\g$ of highest weight $\gamma\in\t^\ast$ (where $\t\subset\g$ is a maximal toral subalgebra), then the Casimir operator acts on $V_\gamma$ as multiplication with a constant $\Cas^{\g,Q}_\gamma$ given by \emph{Freudenthal's formula},
\begin{equation}
\Cas^{\g,Q}_\gamma=Q^\ast(\gamma,\gamma+2\delta_\g),
\label{freudenthal}
\end{equation}
where $Q^\ast$ is the dual of the restriction of $Q$ to $\t$, and $\delta_\g$ is the half-sum of positive roots of $\g$.

The \emph{Killing form} of a Lie algebra $\g$ is the invariant symmetric bilinear form $B_\g$ defined by
\[B_\g(X,Y):=\tr(\ad(X)\circ\ad(Y)),\qquad X,Y\in\g.\]
It is known to be nondegenerate if and only if $\g$ is semisimple, and negative-semidefinite if and only if $\g$ is compact. We assume from now on that $\g$ is compact and semisimple. Thus $-B_\g$ is an invariant inner product on $\g$ called the \emph{standard inner product}.

Clearly, Casimir operators behave under scaling of the inner product as
\begin{equation}
\Cas^{\g,cQ}_V=c^{-1}\Cas^{\g,Q}_V,\qquad c>0.
\label{casscale}
\end{equation}
The standard inner product is distinguished by the following normalization property.

\begin{prop}
\label{casadjoint}
On the adjoint representation of $\g$, $\Cas^{\g,-B_\g}_\g=\Id$.
\end{prop}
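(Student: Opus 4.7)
The plan is to reduce to the case where $\g$ is simple and then compute the Casimir scalar by a one-line trace. Since $\g$ is compact semisimple, I decompose $\g=\g_1\oplus\cdots\oplus\g_r$ into simple ideals. The Killing form satisfies $B_\g=B_{\g_1}\oplus\cdots\oplus B_{\g_r}$ with the summands mutually orthogonal, and a $(-B_\g)$-orthonormal basis is obtained by concatenating $(-B_{\g_j})$-orthonormal bases of the $\g_j$. Because $[\g_i,\g_j]=0$ for $i\neq j$, the operators $\ad(X_i)$ from the different factors act trivially on the others, and $\Cas^{\g,-B_\g}_\g$ preserves each summand, restricting on $\g_j$ to $\Cas^{\g_j,-B_{\g_j}}_{\g_j}$. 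Thus it is enough to handle the simple case.

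So assume $\g$ is simple. Then the adjoint representation $\g$ is irreducible, so by Schur's lemma $\Cas^{\g,-B_\g}_\g = c\cdot\Id$ for some $c\in\R$. To pin down $c$, I take the trace over $\g$. Since $(X_i)$ is $(-B_\g)$-orthonormal, one has $B_\g(X_i,X_i)=-1$, and by definition of the Killing form $\tr(\ad(X_i)^2)=B_\g(X_i,X_i)=-1$. Therefore
\[
c\cdot\dim\g \;=\; \tr\Cas^{\g,-B_\g}_\g \;=\; -\sum_i\tr\bigl(\ad(X_i)^2\bigr) \;=\; -\sum_i B_\g(X_i,X_i) \;=\; \dim\g,
\]
so $c=1$ and the claim follows. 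Patching the simple factors back together gives the assertion in general.

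I do not anticipate any real obstacle here; the only point requiring care is the sign bookkeeping, namely that the minus sign in the definition $\Cas=-\sum_i\rho_\ast(X_i)^2$ exactly cancels the sign of $B_\g(X_i,X_i)=-1$ produced by the negative-definite Killing form on a compact form. An alternative, essentially equivalent derivation would avoid Schur by proving $B_\g(\Cas Y,Z)=B_\g(Y,Z)$ directly: pair $\Cas Y$ with $Z$, move the two $\ad(X_i)$'s onto $Z$ by $\ad$-invariance to get $\sum_i B_\g([X_i,Y],[X_i,Z])$, and observe that expanding $B_\g(Y,Z)=\tr(\ad Y\circ\ad Z)$ in the same orthonormal basis yields precisely the same sum; nondegeneracy of $B_\g$ on semisimple $\g$ then gives $\Cas=\Id$.
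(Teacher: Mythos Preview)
Your argument is correct. The paper states this proposition without proof, treating it as a standard fact; your trace computation is exactly the classical derivation, and indeed the same trick (evaluating $\tr\Cas$ via $-\sum_i B_\g(X_i,X_i)=\dim\g$) is used verbatim in the paper's proof of Lemma~\ref{killingforms}.
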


This fact may be utilized to determine the ratio between two invariant inner products that differ by a factor.

Let $G$ be a compact semisimple Lie group with Lie algebra $\g$, $K$ a closed subgroup with Lie algebra $\k$, and $\m$ an $\Ad(G)$-invariant complement of $\k$ in $\g$. Then $\m$ is canonically identified with the tangent space at the identity coset of the homogeneous space $M=G/K$ (and thus called the \emph{isotropy representation}), and after restriction to $\m$ the standard inner product $-B_\g$ induces a $G$-invariant Riemannian metric on $M$ called the \emph{standard metric}. The Einstein equation for this metric reduces to a simple condition on the Casimir operator of the isotropy representation \cite[Prop.~7.89, 7.92]{besse}.

\begin{prop}
The standard metric on $M=G/K$ is Einstein (with Einstein constant $E$) if and only if $\Cas^{\k,-B_\g}_\m=\lambda\Id$ for some constant $\lambda\in\R$. If this is the case, then
\[\lambda=2E-\frac12.\]
\end{prop}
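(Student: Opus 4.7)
The plan is to reduce the Einstein condition at the origin to an algebraic condition on the Casimir operator of the isotropy representation, by invoking the general Ricci formula for an invariant metric on a reductive homogeneous space and then exploiting the normalization $\Cas^{\g,-B_\g}_\g=\Id$ from Proposition~\ref{casadjoint}.

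First I would apply the classical expression for the Ricci tensor of a $G$-invariant metric on $G/K$ (cf.\ Besse \cite[\S7.D, Prop.~7.89]{besse}), and specialize it to the standard metric $g=-B_\g|_\m$. Viewed as a $K$-equivariant symmetric endomorphism of $\m$ via $g$, the result can be brought into the form
\[\Ric_g \;=\; \tfrac14\Id_\m \,+\, \tfrac12\,\Cas^{\k,-B_\g}_\m.\]
The key simplification that produces this compact form comes from Proposition~\ref{casadjoint}: choosing a $(-B_\g)$-orthonormal basis of $\g$ adapted to the splitting $\g=\k\oplus\m$ and decomposing the sum defining the Casimir of the adjoint representation, one obtains
\[\Id_\m \;=\; \Cas^{\g,-B_\g}_\g\big|_\m \;=\; \Cas^{\k,-B_\g}_\m \,-\, \sum_{j}\ad(M_j)^2\big|_\m ,\]
where $(M_j)$ runs through a $(-B_\g)$-orthonormal basis of $\m$. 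This identifies the structure-constants term in Besse's Ricci formula with a simple expression in the isotropy Casimir, and both halves of the splitting are manifestly $K$-equivariant.

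From the displayed Ricci formula the proposition is then immediate. Since the two sides depend on $\Cas^{\k,-B_\g}_\m$ affinely, the condition $\Ric_g = E\,\Id_\m$ is equivalent to $\Cas^{\k,-B_\g}_\m = \lambda\,\Id_\m$ for some $\lambda\in\R$; comparing the scalar factors yields $E=\tfrac14+\tfrac\lambda2$, i.e.\ $\lambda=2E-\tfrac12$.

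The main obstacle in a fully self-contained derivation is the Ricci formula itself: it requires the Koszul formula for the Levi-Civita connection of the invariant metric on $G/K$ followed by a careful trace computation against a $(-B_\g)$-orthonormal frame, with the pieces of $[\cdot,\cdot]$ in $\k$ and in $\m$ tracked separately. In the present context this is standard and can simply be cited from Besse, so the proof reduces to the short algebraic manipulation above.
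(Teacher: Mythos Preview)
Your argument is correct and in fact more detailed than what the paper offers: the paper does not prove this proposition but simply cites it from Besse \cite[Prop.~7.89, 7.92]{besse}. Your sketch supplies the algebraic link between Besse's Ricci formula and the Casimir normalization of Proposition~\ref{casadjoint}, arriving at $\Ric_g=\tfrac14\Id_\m+\tfrac12\Cas^{\k,-B_\g}_\m$, from which the claimed equivalence and the relation $\lambda=2E-\tfrac12$ follow immediately; this is precisely the content of the cited propositions in Besse, so your approach is the same as the paper's implicit one, just made explicit.
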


This formula is usually employed to quickly find the Einstein constant of a standard homogeneous Einstein manifold. However, if $G/K$ is a symmetric space, then this is not needed, as the following proposition shows \cite[Prop.~7.93]{besse}.

\begin{prop}
\label{einsteinconst}
The standard metric on a symmetric space of compact type is Einstein with $E=\frac12$.
\end{prop}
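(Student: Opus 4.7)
The plan is to invoke the preceding proposition and show that the relevant Casimir value is $\lambda=\tfrac12$, since the relation $\lambda=2E-\tfrac12$ then immediately yields $E=\tfrac12$. The entire task thus reduces to establishing $\Cas^{\k,-B_\g}_\m=\tfrac12\Id$ on $\m$.

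The starting point will be Proposition~\ref{casadjoint}, which gives $\Cas^{\g,-B_\g}_\g=\Id_\g$. I would pick a $-B_\g$-orthonormal basis of $\g$ adapted to the reductive splitting $\g=\k\oplus\m$, say $(e_a)$ for $\k$ and $(e_i)$ for $\m$, and apply the identity to $X\in\m$ to split the Casimir as
\[X=-\sum_a[e_a,[e_a,X]]-\sum_i[e_i,[e_i,X]]=\Cas^{\k,-B_\g}_\m X+S(X),\]
where $S\colon\m\to\m$ is the symmetric operator defined by $S(X):=-\sum_i[e_i,[e_i,X]]$. Here the symmetric-space relation $[\m,\m]\subset\k$ is used to ensure $[e_i,X]\in\k$, and reductivity $[\k,\m]\subset\m$ then guarantees that $S$ indeed takes values in $\m$.

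The core of the proof is to show that $S=\Cas^{\k,-B_\g}_\m$ on $\m$. Both operators are self-adjoint with respect to $-B_\g$ (each summand $-\ad(e)^2$ is self-adjoint, because $\ad(e)$ is $-B_\g$-skew), so by polarization it suffices to compare their quadratic forms. Using invariance of $B_\g$, these evaluate to
\[\langle\Cas^{\k,-B_\g}_\m X,X\rangle=\sum_a\|[e_a,X]\|^2,\qquad\langle S(X),X\rangle=\sum_i\|[e_i,X]\|^2,\]
where $\langle\cdot,\cdot\rangle$ and $\|\cdot\|$ refer to $-B_\g$. These are precisely the squared Hilbert--Schmidt norms of the two blocks $\ad(X)|_\k\colon\k\to\m$ and $\ad(X)|_\m\colon\m\to\k$, and for $X\in\m$ these blocks exhaust $\ad(X)$. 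The main observation, and the only nontrivial step, is that the $-B_\g$-skew-symmetry of $\ad(X)$ on $\g$ forces these two blocks to be negative transposes of one another, so their Hilbert--Schmidt norms coincide; this is exactly where the symmetric-space structure (which makes $\ad(X)$ swap $\k$ and $\m$) enters decisively.

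Combining $S=\Cas^{\k,-B_\g}_\m$ with the splitting $\Cas^{\k,-B_\g}_\m+S=\Id_\m$ yields $\Cas^{\k,-B_\g}_\m=\tfrac12\Id_\m$, so $\lambda=\tfrac12$ in the preceding proposition and hence $E=\tfrac12$, as claimed.
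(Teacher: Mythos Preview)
Your argument is correct. Note, however, that the paper does not supply its own proof of this proposition; it is simply quoted from \cite[Prop.~7.93]{besse}, so there is no in-paper argument to compare against.

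That said, your route fits neatly into the surrounding framework: you use only Proposition~\ref{casadjoint} and the preceding proposition relating $\Cas^{\k,-B_\g}_\m$ to the Einstein constant. The decisive step---that for $X\in\m$ the blocks $\ad(X)|_\k\colon\k\to\m$ and $\ad(X)|_\m\colon\m\to\k$ are negative transposes of each other under $-B_\g$ and hence have equal Hilbert--Schmidt norms---is exactly where the Cartan relation $[\m,\m]\subset\k$ enters, and it is essentially the Casimir reformulation of the classical computation $\Ric=-\tfrac12 B_\g|_\m$ on a symmetric space of compact type. One small point worth making explicit is that $\k$ and $\m$ are $B_\g$-orthogonal (as $\pm1$-eigenspaces of the Cartan involution, a $B_\g$-isometry), which is what allows you to choose an adapted orthonormal basis in the first place; otherwise the argument is complete.
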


At times it becomes necessary to work with different invariant inner products on the same Lie algebra. For example, if $\h\subset\g$ is a simple subalgebra, then necessarily $B_\h$ is a constant multiple of the restriction $B_\g\big|_\h$ since both are $\h$-invariant. Another example is the situation where $\ad: \k\to\so(\m)$ is faithful and $\h\subset\k$ is any subalgebra. Then the standard inner product $-B_\g\big|_\m$ induces on $\so(\m)\cong\Lambda^2\m$ an inner product $\langle\cdot,\cdot\rangle_{\Lambda^2}$ in the usual way, i.e.
\begin{equation}
\langle\alpha,\beta\rangle_{\Lambda^2}=\sum_{i<j}\alpha(e_i,e_j)\beta(e_i,e_j)=-\frac12\tr(\alpha\circ\beta),
\label{lam2inprod}
\end{equation}
where $(e_i)$ is a $-B_\g\big|_\m$-orthonormal basis of $\m$. Pulling this back along $\ad\big|_\h$ gives an invariant inner product on $\h$ that we shall also denote by $\langle\cdot,\cdot\rangle_{\Lambda^2}$.

For our purposes and in the context of the complex Grassmannians, both of these issues shall now be addressed.

\begin{prop}
\label{casCk}
On the defining representation $\C^k$ of $\su(k)$, $\Cas^{\su(k),-B_{\su(k)}}_{\C^k}=\frac{k^2-1}{2k^2}\Id$.
\end{prop}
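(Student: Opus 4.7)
The plan is to reduce the computation to the more convenient invariant inner product $q(X,Y):=-\tr(XY)$ on $\su(k)$ and then convert to $-B_{\su(k)}$ via the scaling relation \eqref{casscale}. Since $\su(k)$ is simple, both invariant forms must be proportional, $-B_{\su(k)}=c\cdot q$ for some $c>0$; the first task is to identify $c$. I would do this by evaluating both sides on the single element $H=\i\,\diag(1,-1,0,\ldots,0)\in\su(k)$: one has $\tr(H^2)=-2$, while enumerating the eigenvalues of $\ad(H)$ on the matrix-unit basis (each pair $(a,b)$ with $a\neq b$ contributing an eigenvalue $\i(h_a-h_b)$) gives $B_{\su(k)}(H,H)=\tr(\ad(H)^2)=-4k$, whence $c=2k$ (equivalently, $B_{\su(k)}=2k\tr$, a standard identity).

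Next, to compute $\Cas^{\su(k),q}_{\C^k}$, I would invoke the completeness relation for $\su(k)$ in $\End(\C^k)$: if $\{T_a\}$ is a $q$-orthonormal basis (so $\tr(T_aT_b)=-\delta_{ab}$), then
\[
\sum_a(T_a)_{ij}(T_a)_{\ell m}=-\delta_{im}\delta_{j\ell}+\tfrac{1}{k}\delta_{ij}\delta_{\ell m},
\]
which arises by orthogonally projecting the identity endomorphism of $\End(\C^k)$ onto $\su(k)$, with the $1/k$ term correcting for the complementary $\i\R\cdot I$ component. Contracting $j=\ell$ and summing gives $\sum_a T_a^2=-\tfrac{k^2-1}{k}\Id$ on $\C^k$, so $\Cas^{\su(k),q}_{\C^k}=\tfrac{k^2-1}{k}\Id$.

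Combining through \eqref{casscale} yields
\[
\Cas^{\su(k),-B_{\su(k)}}_{\C^k}=\tfrac{1}{c}\Cas^{\su(k),q}_{\C^k}=\tfrac{1}{2k}\cdot\tfrac{k^2-1}{k}\Id=\tfrac{k^2-1}{2k^2}\Id,
\]
which is the claim. There is no essential obstacle; the only care needed lies in bookkeeping signs and the proportionality constant $c=2k$. As an alternative I could invoke Freudenthal's formula \eqref{freudenthal} directly on the defining representation's highest weight $L_1$, fixing the pairing on the weight lattice via the adjoint-representation constraint $\Cas^{\su(k),-B_{\su(k)}}_{\su(k)}=1$ from Proposition~\ref{casadjoint}, which returns the same value.
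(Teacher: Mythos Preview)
Your argument is correct; all the numerical checks (the constant $c=2k$ from evaluating on $H=\i\,\diag(1,-1,0,\ldots,0)$, the completeness relation contraction giving $\sum_aT_a^2=-\tfrac{k^2-1}{k}\Id$, and the final rescaling) go through as stated.

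The paper's route is different and considerably less explicit: it simply cites a table of Casimir eigenvalues for fundamental weight representations from \cite{SW22} and then normalises using Proposition~\ref{casadjoint}, which amounts to the Freudenthal approach you mention only as an aside. Your method is fully self-contained and elementary, trading the root-system bookkeeping of Freudenthal's formula for a direct matrix computation via the trace form; this has the advantage of not depending on external tables or weight-lattice conventions. The price is that your argument is specific to the defining representation (where the completeness relation is immediately usable), whereas the Freudenthal route extends uniformly to arbitrary irreducible highest-weight modules.
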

\begin{proof}
We note first that the highest weight of the defining representation is the first fundamental weight $\omega_1$ in the convention of Bourbaki, while the highest weight of the adjoint representation is $\omega_1+\omega_{k-1}$. The Casimir eigenvalues of the fundamental weight representations are calculated for example in \cite{SW22} (see the table on p.~17). Taking into account the normalization in Prop.~\ref{casadjoint}, we find the desired value.
\end{proof}

\begin{lem}
\label{killingforms}
Under the standard inclusion $\su(k)\subset\su(n)$, $2\leq k\leq n-1$, one has
\[B_{\su(n)}\big|_{\su(k)}=\frac{n}{k}B_{\su(k)}.\]
\end{lem}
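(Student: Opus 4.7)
The plan is short: reduce to a single proportionality constant, then compute it. I would proceed in three moves.

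First, by Schur's lemma, any two $\ad$-invariant symmetric bilinear forms on the simple Lie algebra $\su(k)$ are proportional. Since the restriction of the Killing form of an ambient algebra to a subalgebra remains invariant under the adjoint action of the subalgebra, both $B_{\su(n)}\big|_{\su(k)}$ and $B_{\su(k)}$ qualify, so $B_{\su(n)}\big|_{\su(k)}=c\,B_{\su(k)}$ for some $c\in\R$. It only remains to show $c=n/k$.

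Second, I would establish the trace formula
\[B_{\su(m)}(X,Y)=2m\,\tr_{\C^m}(XY),\qquad X,Y\in\su(m),\]
for every $m\ge 2$, where $\tr_{\C^m}$ denotes the trace on the defining representation. By the same Schur argument, both sides differ by a scalar $c_m$. To pin it down, let $(X_i)$ be a $(-B_{\su(m)})$-orthonormal basis of $\su(m)$. On one hand, Proposition~\ref{casCk} gives
\[\sum_i\tr_{\C^m}(X_i^2)=-\tr_{\C^m}\!\bigl(\Cas^{\su(m),-B_{\su(m)}}_{\C^m}\bigr)=-\frac{m^2-1}{2m}.\]
On the other, writing $B_{\su(m)}(\cdot,\cdot)=c_m\tr_{\C^m}(\cdot\,\cdot)$, the orthonormality $-B_{\su(m)}(X_i,X_i)=1$ yields $\tr_{\C^m}(X_i^2)=-1/c_m$, hence $\sum_i\tr_{\C^m}(X_i^2)=-(m^2-1)/c_m$. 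Comparing the two expressions gives $c_m=2m$.

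Third, realize $\su(k)\hookrightarrow\su(n)$ as block matrices with zero entries outside the upper-left $k\times k$ corner. For $X,Y\in\su(k)$ the trivial block contributes nothing, so $\tr_{\C^n}(XY)=\tr_{\C^k}(XY)$, and therefore
\[B_{\su(n)}(X,Y)=2n\,\tr_{\C^n}(XY)=2n\,\tr_{\C^k}(XY)=\frac{n}{k}\,B_{\su(k)}(X,Y),\]
yielding $c=n/k$. The only real obstacle is carefully threading the Casimir normalization to extract the factor $2m$ in the trace formula; once that is in hand, everything else is immediate from the fact that the standard block-diagonal inclusion preserves traces.
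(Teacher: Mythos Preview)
Your proof is correct. Both arguments rely on Proposition~\ref{casCk} and on the Schur-type observation that two invariant forms on $\su(k)$ are proportional, but from there the routes diverge.

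The paper computes the trace of $\Cas^{\su(k),-B_{\su(n)}}_{\su(n)}$ in two ways: once directly from the definition of the Killing form (giving $k^2-1$), and once via the branching $\su(n)\big|_{\su(k)}\cong\su(k)\oplus(n-k)\C^k\oplus(\text{trivial})$ together with Propositions~\ref{casadjoint} and~\ref{casCk}. Comparing yields $c=n/k$. Your route instead extracts the universal identity $B_{\su(m)}=2m\,\tr_{\C^m}$ from Proposition~\ref{casCk} alone, and then the block inclusion makes the ratio $n/k$ immediate. Your argument is slightly more elementary---it avoids the branching of the adjoint representation and Proposition~\ref{casadjoint}---and it isolates the reusable trace formula $B_{\su(m)}=2m\,\tr_{\C^m}$ as a byproduct. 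The paper's approach, by contrast, stays entirely within the Casimir framework it is setting up and illustrates the general technique (tracing a Casimir over a decomposed module) that recurs in the subsequent lemmas.
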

\begin{proof}
Let $c>0$ such that $B_{\su(n)}\big|_{\su(k)}=cB_{\su(k)}$. We consider the Casimir operator $\Cas^{\su(k),-B_{\su(n)}}_{\su(n)}$ and take a trace. On one hand,
\[\tr\Cas^{\su(k),-B_{\su(n)}}_{\su(n)}=-\sum_i\tr(\ad(X_i)^2)=-\sum_iB_{\su(n)}(X_i,X_i)=\dim\su(k)=k^2-1,\]
where $(X_i)$ is a $-B_{\su(n)}$-orthonormal basis of $\su(k)$. On the other hand, we note that the restriction of the adjoint representation of $\su(n)$ to the subalgebra $\su(k)$ splits as $\su(k)\oplus(n-k)\C^k$ plus an $(n-k)^2$-dimensional trivial summand. Thus
\begin{align*}
\tr\Cas^{\su(k),-B_{\su(n)}}_{\su(n)}&=c^{-1}\cdot\tr\Cas^{\su(k),-B_{\su(k)}}_{\su(k)}+(n-k)\tr\Cas^{\su(k),-B_{\su(k)}}_{\C^k}\\
&=c^{-1}\cdot\left((k^2-1)\cdot1+2k(n-k)\cdot\frac{k^2-1}{2k^2}\right)\\
&=c^{-1}\cdot(k^2-1)\cdot\frac{n}{k}
\end{align*}
by \eqref{casscale}, Prop.~\ref{casadjoint} and Prop.~\ref{casCk}. It follows that $c=\frac{n}{k}$.
\end{proof}

\begin{lem}
Under the standard inclusion of $\h_\pm:=\su(n_\pm)$ into $\k:=\mathfrak{s}(\u(n_+)\oplus\u(n_-))$,
\[\langle\cdot,\cdot\rangle_{\Lambda^2}\big|_{\h_\pm}=-\frac{n_\mp}{2n_\pm}B_{\h_\pm}.\]
\end{lem}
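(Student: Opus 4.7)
The plan is to compute $\langle X,X\rangle_{\Lambda^2}$ for $X\in\h_\pm$ by splitting a trace on $\g=\su(n)$ into pieces along the orthogonal decomposition $\g=\k\oplus\m$. Since $X\in\k$ and $[\k,\m]\subset\m$, $[\k,\k]\subset\k$, the subspaces $\k$ and $\m$ are both invariant under $\ad(X)$, so
\[B_\g(X,X)=\tr_\g(\ad(X)^2)=\tr_\k(\ad(X)^2)+\tr_\m(\ad(X)^2).\]
By definition \eqref{lam2inprod} of the pullback inner product on $\h_\pm$, one has $\langle X,X\rangle_{\Lambda^2}=-\tfrac12\tr_\m(\ad(X)^2)$, so the task reduces to computing the $\g$-trace and the $\k$-trace.

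For the $\g$-trace, Lemma \ref{killingforms} gives immediately $B_\g(X,X)=\tfrac{n}{n_\pm}B_{\h_\pm}(X,X)$. For the $\k$-trace, I would use the fact that $\k\cong\R\oplus\h_+\oplus\h_-$ is a direct sum of ideals (the $\R$ factor being the center of $\k$, and $\h_+,\h_-$ commuting with each other inside $\u(n_+)\oplus\u(n_-)$). Consequently, for $X\in\h_\pm$ the operator $\ad_\k(X)$ vanishes on the center and on $\h_\mp$, and agrees with $\ad_{\h_\pm}(X)$ on $\h_\pm$, so $B_\k(X,X)=B_{\h_\pm}(X,X)$.

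Subtracting, one obtains
\[\tr_\m(\ad(X)^2)=\left(\tfrac{n}{n_\pm}-1\right)B_{\h_\pm}(X,X)=\tfrac{n_\mp}{n_\pm}B_{\h_\pm}(X,X),\]
hence $\langle X,X\rangle_{\Lambda^2}=-\tfrac{n_\mp}{2n_\pm}B_{\h_\pm}(X,X)$, and polarization yields the stated identity. I do not expect any serious obstacle here—this is essentially a bookkeeping lemma once Lemma \ref{killingforms} is in hand—and the only mildly delicate point is remembering that the one-dimensional center of $\k$ sits \emph{outside} the simple ideals $\h_\pm$, so that it contributes nothing to $\tr_\k(\ad(X)^2)$ when $X\in\su(n_\pm)$.
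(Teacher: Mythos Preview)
Your proof is correct but takes a different route from the paper's. The paper computes the trace of the Casimir operator $\Cas^{\h_\pm,\langle\cdot,\cdot\rangle_{\Lambda^2}}_\m$ in two ways: once directly via \eqref{lam2inprod}, yielding $2\dim\h_\pm$, and once by decomposing $\m\cong n_\mp\,\C^{n_\pm}$ as an $\h_\pm$-module and invoking the Casimir eigenvalue on $\C^{n_\pm}$ from Proposition~\ref{casCk}. You instead split $B_\g(X,X)=\tr_\k(\ad(X)^2)+\tr_\m(\ad(X)^2)$ along the reductive decomposition, then read off the $\g$-piece from Lemma~\ref{killingforms} and the $\k$-piece from the ideal decomposition $\k\cong\R\oplus\h_+\oplus\h_-$. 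Your argument is arguably more direct: it bypasses the branching of $\m$ under $\h_\pm$ and the explicit Casimir value on the defining representation, relying only on Lemma~\ref{killingforms} and elementary Lie-algebraic bookkeeping. The paper's approach, on the other hand, is self-contained within its Casimir framework and parallels the proof of Lemma~\ref{killingforms} itself.
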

\begin{proof}
Let $(X_i)$ be an orthonormal basis of $\h_\pm$ with respect to $\langle\cdot,\cdot\rangle_{\Lambda^2}$, and $(e_j)$ a $-B_\g$-orthonormal basis of $\m$. We take a trace of $\Cas^{\h_\pm,\langle\cdot,\cdot\rangle_{\Lambda^2}}_\m$ as follows. Using the definition of the inner product on $2$-forms \eqref{lam2inprod}, we may calculate
\begin{align*}
\tr\Cas^{\h_\pm,\langle\cdot,\cdot\rangle_{\Lambda^2}}_\m&=\sum_{i,j}B_\g(\ad(X_i)^2e_j,e_j)=-\sum_{i,j}B_\g(\ad(X_i)e_j,\ad(X_i)e_j)\\
&=2\sum_i\langle X_i,X_i\rangle_{\Lambda^2}=2\dim\h_{\pm}=2(n_\pm^2-1).
\end{align*}
Let $c>0$ such that $\langle\cdot,\cdot\rangle_{\Lambda^2}\big|_{\h_\pm}=-cB_{\h_\pm}$. Recall that $\m\cong\C^{n_+}\otimes_\C(\C^{n_-})^\ast$ as a representation of $\k$. Restricted to $\h_\pm$ it splits as $\m\cong n_{\mp}\C^{n_\pm}$ (or its dual, which has the same Casimir eigenvalue). Hence, using \eqref{casscale} and Prop.~\ref{casCk}, we find
\begin{align*}
\tr\Cas^{\h_\pm,\langle\cdot,\cdot\rangle_{\Lambda^2}}_\m&=c^{-1}\cdot\tr\Cas^{\h_\pm,-B_{\h_\pm}}_\m=c^{-1}\cdot 2n_+n_-\cdot\frac{n_\pm^2-1}{2n_\pm^2}=c^{-1}\cdot(n_\pm^2-1)\cdot\frac{n_\mp}{n_\pm}.
\end{align*}
Thus $2c=\frac{n_\mp}{n_\pm}$ as claimed.
\end{proof}

\begin{kor}
\label{casvalues}
For the isotropy and adjoint representations of $\su(n_\pm)$,
\[\Cas^{\su(n_\pm),\langle\cdot,\cdot\rangle_{\Lambda^2}}_\m=\frac{n_\pm^2-1}{n_+n_-}\Id,\qquad \Cas^{\su(n_\pm),\langle\cdot,\cdot\rangle_{\Lambda^2}}_{\su(n_\pm)}=\frac{2n_\pm}{n_\mp}\Id.\]
\end{kor}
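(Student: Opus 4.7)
The plan is to derive both identities directly from the scaling formula \eqref{casscale}, Propositions \ref{casadjoint} and \ref{casCk}, and the preceding lemma, which identifies $\langle\cdot,\cdot\rangle_{\Lambda^2}\big|_{\h_\pm}$ with the rescaled standard inner product $\frac{n_\mp}{2n_\pm}(-B_{\h_\pm})$. Everything to be proved is thus routine bookkeeping once these ingredients are in hand.

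For the adjoint summand, I would first invoke Proposition \ref{casadjoint} to see that the Casimir with respect to $-B_{\h_\pm}$ acts as the identity on $\h_\pm=\su(n_\pm)$ itself. The scaling relation \eqref{casscale}, applied with $c=\frac{n_\mp}{2n_\pm}$, then turns this eigenvalue into $c^{-1}=\frac{2n_\pm}{n_\mp}$, yielding the second identity.

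For the isotropy summand, the key observation (already used in the proof of the preceding lemma) is that under $\h_\pm$ the module $\m\cong\C^{n_+}\otimes_\C(\C^{n_-})^\ast$ decomposes as $n_\mp$ copies of the defining representation $\C^{n_\pm}$ (or its conjugate, which carries the same Casimir eigenvalue). Proposition \ref{casCk} supplies the eigenvalue $\frac{n_\pm^2-1}{2n_\pm^2}$ with respect to $-B_{\h_\pm}$, and another application of the scaling formula with the same factor $c^{-1}=\frac{2n_\pm}{n_\mp}$ collapses to $\frac{n_\pm^2-1}{n_+n_-}$, the claimed value.

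There is no conceptual obstacle; the only point that warrants care is keeping track of reciprocals in \eqref{casscale} and of the sign in the ratio between $\langle\cdot,\cdot\rangle_{\Lambda^2}$ and $B_{\h_\pm}$, since the former is positive-definite and hence corresponds to a \emph{positive} multiple of $-B_{\h_\pm}$.
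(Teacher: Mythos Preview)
Your proposal is correct and is exactly the argument the paper has in mind: the corollary is stated without proof precisely because it follows immediately from the preceding lemma's identification $\langle\cdot,\cdot\rangle_{\Lambda^2}\big|_{\h_\pm}=\tfrac{n_\mp}{2n_\pm}(-B_{\h_\pm})$ together with the scaling relation \eqref{casscale}, Proposition~\ref{casadjoint}, and Proposition~\ref{casCk}, applied as you describe.
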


\subsection{The curvature operator}


On a Riemannian manifold $(M,g)$ with Levi-Civita connection $\nabla$, let $R$ be the Riemann curvature tensor defined by
\[R(X,Y)Z=[\nabla_X,\nabla_Y]Z-\nabla_{[X,Y]}Z,\qquad X,Y,Z\in\X(M).\]
Moreoever we denote by $\Ric(X,Y)=\tr(Z\mapsto R(Z,X)Y)$ the Ricci curvature and let
\[g(\Ric(X),Y)=\Ric(X,Y)\]
define the Ricci endomorphism $\Ric: T_pM\to T_pM$. The \emph{curvature operator (of the first kind)} is the self-adjoint linear operator $\curvop: \Lambda^2T_pM\to\Lambda^2T_pM$ derived from the curvature via
\[\langle\curvop(X\wedge Y),Z\wedge W\rangle=g(R(X,Y)Z,W),\]
where the inner product on $\Lambda^2T_pM$ is the usual one induced by $g$, cf.~\eqref{lam2inprod}. We note that in our convention, positive curvature operator implies \emph{negative} sectional curvature.

We will need some of the eigenvalues of the curvature operator on the complex Grassmannians. Although they are available in the literature, we take the liberty to rederive them using the so-called \emph{standard/Weitzenböck curvature endomorphism} $\curvendo$ which is given on any vector bundle $VM=\O(TM)\times_\rho V$ associated to the orthonormal frame bundle $\O(TM)$ as
\[\curvendo:=\sum_a\rho_\ast(\omega_a)\rho_\ast(\curvop\omega_a),\]
where $(\omega_a)$ is an orthonormal basis of $\so(T_pM)\cong\Lambda^2T_pM$, and $\rho_\ast$ is the differential of the representation $\rho: \O(n)\to\GL(V)$.

We shall make use of the following known facts.

\begin{prop}
\label{curvendo}
\begin{enumerate}[\upshape(i)]
 \item On $TM$, we have $\curvendo=\Ric$.
 \item On $\Lambda^2TM$, we have $\curvendo=\Ric_\ast+2\curvop$, where $\Ric_\ast$ is the extension of $\Ric$ as a derivation, i.e.
 \[\Ric_\ast(X\wedge Y)=\Ric(X)\wedge Y+X\wedge\Ric(Y).\]
\end{enumerate}
\end{prop}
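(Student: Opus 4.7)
My plan is to prove both identities by direct computation in a local orthonormal frame, reducing each claim to a manipulation of the Riemann tensor with its standard symmetries. Fix a point $p \in M$ and choose an orthonormal basis $(e_i)$ of $T_pM$; then $(\omega_{ij}:=e_i\wedge e_j)_{i<j}$ is an orthonormal basis of $\Lambda^2T_pM \cong \so(T_pM)$ with respect to the inner product \eqref{lam2inprod}. The first preliminary step is to identify the action of $\curvop(\omega_{ij})$ as a skew-symmetric endomorphism of $T_pM$. Expanding $\curvop(\omega_{ij}) = \sum_{k<l} g(R(e_i,e_j)e_k,e_l)\,e_k\wedge e_l$ in the same basis and acting on $X \in T_pM$ via $(e_k\wedge e_l)(X) = g(e_k,X)e_l - g(e_l,X)e_k$, I would use the skew-symmetry of $R(e_i,e_j)$ to collapse both halves and find $\curvop(\omega_{ij})X = R(e_i,e_j)X$.

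For part (i), the representation $\rho_\ast$ on $TM$ is just inclusion $\so(T_pM)\hookrightarrow\End(T_pM)$, so
\[\curvendo(X) = \sum_{i<j}(e_i\wedge e_j)\bigl(R(e_i,e_j)X\bigr) = \sum_{i<j}\bigl[g(e_i,R(e_i,e_j)X)e_j - g(e_j,R(e_i,e_j)X)e_i\bigr].\]
The summand is symmetric in $i\leftrightarrow j$, so I can replace $\sum_{i<j}$ with $\tfrac12\sum_{i,j}$, use the skew-symmetry of $R(e_i,e_j)$ to rewrite $g(e_i,R(e_i,e_j)X) = -g(R(e_i,e_j)e_i,X)$, and recognize $\sum_i R(e_j,e_i)e_i = \Ric(e_j)$. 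After collecting, both contributions yield $\tfrac12\Ric(X)$, giving $\curvendo(X) = \Ric(X)$.

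For part (ii), since $\rho_\ast(\omega_{ij})$ acts on $\Lambda^2TM$ as a derivation, Leibniz gives four terms in $\curvendo(X\wedge Y)$. The ``diagonal'' terms
\[\sum_{i<j}\bigl[\omega_{ij}\curvop(\omega_{ij})X\bigr]\wedge Y + X\wedge\sum_{i<j}\bigl[\omega_{ij}\curvop(\omega_{ij})Y\bigr] = \Ric(X)\wedge Y + X\wedge\Ric(Y) = \Ric_\ast(X\wedge Y)\]
by part (i). For the remaining two ``cross'' terms $S := \sum_{i<j}\curvop(\omega_{ij})X\wedge\omega_{ij}Y$ and $T := \sum_{i<j}\omega_{ij}X\wedge\curvop(\omega_{ij})Y$, I would again symmetrize $\sum_{i<j}\to\tfrac12\sum_{i,j}$, expand the wedges, and contract indices to obtain
\[S+T = \sum_\ell\bigl[R(Y,e_\ell)X - R(X,e_\ell)Y\bigr]\wedge e_\ell.\]
Applying the first Bianchi identity, $R(Y,e_\ell)X - R(X,e_\ell)Y = -R(X,Y)e_\ell$, so
\[S+T = -\sum_\ell R(X,Y)e_\ell\wedge e_\ell = \sum_{k,l}g(R(X,Y)e_k,e_l)\,e_k\wedge e_l = 2\curvop(X\wedge Y),\]
where in the last step I expand $R(X,Y)e_\ell$ in the frame and use the antisymmetry of $g(R(X,Y)\cdot,\cdot)$. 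Combined with the diagonal contribution, this yields $\curvendo = \Ric_\ast + 2\curvop$ on $\Lambda^2TM$.

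The main obstacles are bookkeeping rather than conceptual: correctly translating between the $\Lambda^2$/$\so$ identifications and their inner products (ensuring the factors of $\tfrac12$ from $\sum_{i<j}$ vs.\ $\sum_{i,j}$ cancel consistently), and pinpointing the one place where the first Bianchi identity must be invoked to convert the cross terms into $\curvop(X\wedge Y)$ rather than a different contraction of $R$.
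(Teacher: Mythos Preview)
Your proof is correct. The paper does not actually prove this proposition; it is stated there as a ``known fact'' and used without justification, so your direct computation in a local orthonormal frame supplies what the paper omits. The argument is the standard one: the identification $\curvop(\omega_{ij})X=R(e_i,e_j)X$, the symmetrization $\sum_{i<j}\to\tfrac12\sum_{i,j}$, and the single invocation of the first Bianchi identity to convert the cross terms into $2\curvop(X\wedge Y)$ are all handled correctly and match the paper's sign conventions for $R$, $\curvop$, and the $\Lambda^2\leftrightarrow\so$ identification.
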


\begin{prop}
\label{curvcas}
If $(M=G/K,g)$ is a Riemannian symmetric space with metric $g$ induced by the restriction to $\m$ of an invariant inner product $Q$ on $\g$, then
\[\curvendo=\Cas^{\k,Q}_V\]
on any homogeneous vector bundle $VM=G\times_KV$.
\end{prop}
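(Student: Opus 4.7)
Since both $\curvendo$ and $\Cas^{\k,Q}_V$ are $G$-invariant bundle endomorphisms of $VM$, it suffices to verify the identity at the origin $o$, where both reduce to a $K$-equivariant endomorphism of $V$. At $o$, the plan is to expand $\curvendo$ directly from its definition, exploiting the special form that the curvature operator takes on a symmetric space.

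First, I would show that $\curvop$ factors as $\curvop=-\ad\circ\ad^\ast$, where $\ad\colon\k\to\Lambda^2\m$ denotes the isotropy representation viewed under the canonical identification $\so(\m)\cong\Lambda^2\m$, and $\ad^\ast\colon\Lambda^2\m\to\k$ is its adjoint with respect to $Q|_\k$ on $\k$ and $\langle\cdot,\cdot\rangle_{\Lambda^2}$ on $\Lambda^2\m$. This follows from the symmetric-space relation $R(X,Y)Z=-[[X,Y],Z]$ together with the invariance of $Q$, which simultaneously produces the explicit formula $\ad^\ast(X\wedge Y)=[X,Y]$. Consequently $\im\curvop\subset\ad(\k)$ and, by self-adjointness, $\curvop$ vanishes on $\ad(\k)^\perp$.

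Next, I would choose an orthonormal basis of $\Lambda^2\m$ adapted to the splitting $\ad(\k)\oplus\ad(\k)^\perp$; only summands from $\ad(\k)$ contribute to $\curvendo$. Picking $\omega_\alpha=\ad X_\alpha$ with $(X_\alpha)$ orthonormal with respect to the pullback inner product $\langle\cdot,\cdot\rangle_{\Lambda^2}|_\k$, the crucial identity to establish is
\[\ad^\ast\ad X_\alpha=X^\alpha,\]
where $(X^\alpha)$ denotes the $Q|_\k$-dual basis to $(X_\alpha)$. This follows because both sides, when paired against an arbitrary $Y\in\k$, reproduce the $\alpha$-th coordinate of $Y$ in the basis $(X_\beta)$. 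Hence $\curvop(\omega_\alpha)=-\ad(X^\alpha)$, and since $\rho_\ast(\ad X)$ acts on $V$ as the representation $\rho_\ast(X)$ of $X\in\k$, the definition unfolds into
\[\curvendo=-\sum_\alpha\rho_\ast(X_\alpha)\rho_\ast(X^\alpha)=\Cas^{\k,Q}_V,\]
the right-hand side being the basis-independent form of the Casimir.

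The main conceptual hurdle is keeping track of the two a priori distinct inner products on $\k$---the restriction $Q|_\k$ and the pullback $\langle\cdot,\cdot\rangle_{\Lambda^2}$ along $\ad$---and verifying that the scale factor between them, hidden inside the factorization of $\curvop$, cancels exactly against the change of basis implicit in using $Q|_\k$-dual bases, so that the final expression is the unrescaled $Q$-Casimir rather than a multiple of it.
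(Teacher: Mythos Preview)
The paper does not actually supply a proof of this proposition: it is listed among ``the following known facts'' and stated without argument. Your sketch is correct and is essentially the standard proof of this classical identity; in particular the factorization $\curvop=-\ad\circ\ad^\ast$ via the symmetric-space curvature formula $R(X,Y)Z=-[[X,Y],Z]$ and the dual-basis computation $\ad^\ast\ad X_\alpha=X^\alpha$ are exactly the right ingredients, and your remark about the two inner products on $\k$ pinpoints the only place one could slip on normalizations.

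One small point worth making explicit: the paper's definition of $\curvendo$ is for bundles associated to the full orthonormal frame bundle, where $\rho_\ast$ is defined on all of $\so(\m)$, whereas for a general homogeneous bundle $G\times_KV$ the differential $\rho_\ast$ is a priori only defined on $\k$. Your observation that $\curvop$ vanishes on $\ad(\k)^\perp$ is what makes the expression $\sum_a\rho_\ast(\omega_a)\rho_\ast(\curvop\omega_a)$ well-defined in this generality (the potentially problematic terms are multiplied by zero), but it would be worth saying this in one sentence rather than leaving it implicit.
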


The eigenvalues of $\curvop$ may now be calculated with ease.

\begin{lem}
\label{curveigen}
The eigenvalues of the curvature operator on the complex Grassmannians with the standard metric are given by
\[\curvop\omega=-\frac12\omega,\qquad\curvop\big|_{\bbE_\pm}=-\frac{n_\mp}{2n}\Id,\qquad\curvop\big|_{(\R\omega\oplus\bbE_+\oplus\bbE_-)^\perp}=0.\]
\end{lem}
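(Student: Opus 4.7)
The overall plan is to reduce the eigenvalue computation to Casimir data on the isotropy representation via the machinery just introduced, and then to invoke the Kähler condition to dispose of the $(2,0)+(0,2)$ piece.

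First, combining Propositions~\ref{curvcas}, \ref{curvendo}(ii), and \ref{einsteinconst}: the standard metric has $E=\tfrac12$, so $\Ric=\tfrac12\Id$ on $TM$ and therefore $\Ric_\ast=\Id$ on $\Lambda^2 TM$. Hence
\[\curvop=\tfrac12\left(\Cas^{\k,-B_\g}_{\Lambda^2\m}-\Id\right)\]
on $\Lambda^2 TM$. Since $\curvop$ is $K$-equivariant and self-adjoint, it acts as a scalar on each $\k$-isotypical component of $\Lambda^2\m$, and it suffices to identify the Casimir eigenvalue on each summand of the decomposition.

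Next, the $\k$-module $\Lambda^2\m$ splits as $\Lambda^{1,1}\m\oplus(\Lambda^{2,0}+\Lambda^{0,2})\m$, and the first factor decomposes further as $\R\omega\oplus\bbE_+\oplus\bbE_-\oplus\bbF$. On the trivial summand $\R\omega$ the Casimir vanishes and one reads off $\curvop\omega=-\tfrac12\omega$. On $\bbE_\pm\cong\su(n_\pm)$ only the summand $\su(n_\pm)\subset\k$ acts, via the adjoint representation; the scaling rule \eqref{casscale}, Proposition~\ref{casadjoint}, and Lemma~\ref{killingforms} together yield Casimir $\tfrac{n_\pm}{n}$, which gives $\curvop|_{\bbE_\pm}=-\tfrac{n_\mp}{2n}\Id$. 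For $\bbF$, a standard decomposition of $\u(\m)\cong\Lambda^{1,1}\m$ (using $\m\cong\C^{n_+}\otimes_\C(\C^{n_-})^\ast$) identifies $\bbF_\C\cong\sl_{n_+}\otimes\sl_{n_-}$ as a $\k_\C$-module; thus its Casimir is the sum of the two adjoint Casimirs, $\tfrac{n_+}{n}+\tfrac{n_-}{n}=1$, and $\curvop|_\bbF=0$.

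It remains to show that $\curvop$ vanishes on $(\Lambda^{2,0}+\Lambda^{0,2})\m$, which together with the previous step accounts for the whole orthogonal complement of $\R\omega\oplus\bbE_+\oplus\bbE_-$. For this I would invoke the Kähler property directly: since $\nabla J=0$, each $R(X,Y)$ commutes with $J$ and hence lies in $\u(T_pM)\cong\Lambda^{1,1}T_pM$; symmetrizing over the two pairs of arguments shows $R\in\Lambda^{1,1}\otimes\Lambda^{1,1}$, so $\curvop$ annihilates $(\Lambda^{1,1})^\perp=(\Lambda^{2,0}+\Lambda^{0,2})\m$. The only step requiring real care is the identification of the $\k_\C$-module structure of $\bbF$ as an external tensor product of adjoints; once this is in place, everything else is a direct application of the fibrewise identities already assembled.
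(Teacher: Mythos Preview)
Your argument is correct, and for $\R\omega$ and $\bbE_\pm$ it matches the paper's proof verbatim: both reduce to $\curvop=\tfrac12(\Cas^{\k,-B_\g}-\Id)$ and then invoke Proposition~\ref{casadjoint} together with Lemma~\ref{killingforms}.

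The only genuine difference is how the vanishing on $(\R\omega\oplus\bbE_+\oplus\bbE_-)^\perp$ is handled. The paper observes in one line that $\R\omega\oplus\bbE_+\oplus\bbE_-$ is fibrewise the Riemannian holonomy algebra $\k$ of the symmetric space, and hence $\curvop$ vanishes on its orthogonal complement (the image of $\curvop$ being contained in the holonomy, self-adjointness does the rest). You instead split the complement into $\bbF$ and the $(2,0)+(0,2)$ piece, compute the Casimir on $\bbF$ explicitly as $\tfrac{n_+}{n}+\tfrac{n_-}{n}=1$ via the tensor-product identification $\bbF_\C\cong\sl_{n_+}\otimes\sl_{n_-}$, and dispose of the anti-invariant part using the K\"ahler condition. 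Both routes are sound; the paper's holonomy argument is shorter and more conceptual, while yours is more explicit and incidentally confirms the module structure of $\bbF$ that the paper only establishes later in Lemma~\ref{skewherm}. One small remark: your Casimir computation on $\bbF$ tacitly uses that the center of $\k$ acts trivially on all of $\Lambda^{1,1}\m$, which is immediate since $[J,\cdot]$ vanishes there, but is worth stating.
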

\begin{proof}
Recall that $(M,g)$ is Einstein with $\Ric(X)=\frac{1}{2}X$ by Prop.~\ref{einsteinconst}. Thus we have $\Ric_\ast(X\wedge Y)=X\wedge Y$, and Prop.~\ref{curvendo} yields
\[\curvendo=\Id+2\curvop\]
on $\Lambda^2TM$. Since $\bbE_\pm$ are homogeneous vector bundles with fiber $\su(n_\pm)$, we may combine Prop.~\ref{curvcas} with Prop.~\ref{casadjoint} and Prop.~\ref{killingforms} to obtain
\[\curvop\big|_{\bbE_\pm}=\frac{1}{2}\Cas^{\k,-B_\g}_{\su(n_\pm)}-\frac{1}{2}\Id=\frac{n_\pm}{2n}\Cas^{\su(n_\pm),-B_{\su(n_\pm)}}_{\su(n_\pm)}-\frac{1}{2}\Id=\left(\frac{n_\pm}{2n}-\frac{1}{2}\right)\Id=-\frac{n_\mp}{2n}\Id.\]
On the trivial bundle $\R\omega$, the Casimir term vanishes, yielding $\curvop\omega=-\frac{1}{2}\omega$. For the sake of completeness we note that $\R\omega\oplus\bbE_+\oplus\bbE_-$ is fibrewise isomorphic to the Riemannian holonomy algebra $\k=\mathfrak{s}(\u(n_+)\oplus\u(n_-))$ of the symmetric space $(M,g)$, so the curvature operator vanishes on its orthogonal complement.
\end{proof}

\begin{bem}
Note that the eigenvalues of $\curvop$ have opposite sign as in \cite{NS23} due to different sign conventions for the curvature.
\end{bem}

\subsection{Commutators and anticommutators}
\label{sec:commanticomm}


The natural action of an endomorphism $A\in\End T_pM$ on tensors is as a derivation. On differential forms in particular it may be written as
\[A_\ast\alpha=\sum_iAe_i\wedge(e_i\intprod\alpha),\qquad\alpha\in\Lambda^\ast T_pM,\]
where $(e_i)$ is an orthonormal basis of $T_pM$. If $\alpha$ is a $2$-form, identified via the metric with a skew-symmetric endomorphism, then $A_\ast\alpha$ may be expressed using a commutator and an anticommutator as
\[A_\ast\alpha=[A_{\Lambda^2},\alpha]+\{A_{\Sym^2},\alpha\},\]
where $A_{\Lambda^2}$ and $A_{\Sym^2}$ are the skew-symmetric and symmetric part of $A$, respectively, according to the decomposition $\End TM\cong\Lambda^2TM\oplus\Sym^2TM$. This occurrence of both commutators and anticommutators in the natural action warrants a discussion about their behavior with respect to the relevant subbundles of $\Lambda^2TM$ on the complex Grassmannians.

Recall that the isotropy representation $\m$ of the complex Grassmannian $M=G/K$ is, as a complex representation, equivalent to $\C^{n_+}\otimes_\C(\C^{n_-})^\ast$. Under this equivalence, the almost complex structure $J$ translates to multiplication with $\i$. We may thus identify the (assocative) algebra $\End^+\m$ of endomorphisms commuting with $J$ with the algebra
\[\gl(n_+n_-,\C)=\gl(n_+,\C)\otimes\gl(n_-,\C)\]
acting on $\m\cong\C^{n_+}\otimes_\C(\C^{n_-})^\ast$ in the natural way, namely
\[(A\otimes B)(v\otimes\lambda)=(Av)\otimes(\lambda B^\ast)\]
for $A\in\gl(n_+,\C)$, $B\in\gl(n_-,\C)$, $v\in\C^{n_+}$, $\lambda\in(\C^{n_-})^\ast$. The subalgebras $\gl(n_\pm,\C)$, which embed using the Kronecker product into $\gl(n_+n_-,\C)$ as
\begin{align*}
 \gl(n_+,\C)&\hookrightarrow\gl(n_+n_-,\C):\qquad A\mapsto A\otimes I_{n_-},\\
 \gl(n_-,\C)&\hookrightarrow\gl(n_+n_-,\C):\qquad B\mapsto I_{n_+}\otimes B,
\end{align*}
clearly commute with each other. This seemingly trivial observation has important con\-se\-quen\-ces for commutation and anticommutation relations between elements of the vector bundles $\bbE_\pm$ and $\bbF$.

\begin{lem}
\label{comm}
$[\bbE_+,\bbE_-]=0$.
\end{lem}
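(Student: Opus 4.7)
The plan is to exploit directly the tensor product structure already spelled out in the paragraph preceding the lemma. Under the identification $\m\cong\C^{n_+}\otimes_\C(\C^{n_-})^\ast$, the isotropy embedding $\k\hookrightarrow\End^+\m\cong\gl(n_+,\C)\otimes\gl(n_-,\C)$ sends the two simple ideals $\su(n_\pm)\subset\k$ exactly to the subalgebras $A\mapsto A\otimes I_{n_-}$ and $B\mapsto I_{n_+}\otimes B$ described above. This is essentially tautological from the definition of the isotropy representation of $K=\rmS(\U(n_+)\times\U(n_-))$ on $\m$: the factor $\U(n_+)$ acts on $\C^{n_+}$ and trivially on $(\C^{n_-})^\ast$, while $\U(n_-)$ acts on $(\C^{n_-})^\ast$ and trivially on $\C^{n_+}$.

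From this one immediately obtains, for any $A\in\su(n_+)$ and $B\in\su(n_-)$,
\[
(A\otimes I_{n_-})(I_{n_+}\otimes B) \;=\; A\otimes B \;=\; (I_{n_+}\otimes B)(A\otimes I_{n_-}),
\]
so the two subalgebras commute as subalgebras of $\End^+\m$. Since the fibers of $\bbE_\pm$ are by definition the images of $\su(n_\pm)$ under these embeddings, the commutator of any section of $\bbE_+$ with any section of $\bbE_-$ vanishes pointwise, and the claim follows.

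The only point that needs care is to observe that the commutator bracket is well-defined on $\Lambda^{1,1}_0T^\ast M$: under the metric identification of $2$-forms with skew endomorphisms, the commutator of two skew endomorphisms is again skew and commutes with $J$ whenever both factors do, so $[\bbE_+,\bbE_-]$ lives naturally in $\Lambda^{1,1}T^\ast M$ even before we conclude that it is zero. No real obstacle is expected here, since the lemma is essentially a restatement of the Kronecker-product observation already made in the preceding paragraph, packaged as a bundle statement via the parallel isotropy splitting of $\Lambda^{1,1}_0T^\ast M$.
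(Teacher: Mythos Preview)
Your proof is correct and follows essentially the same approach as the paper: both identify the fibers of $\bbE_\pm$ with the Kronecker-embedded copies of $\su(n_\pm)$ inside $\gl(n_+n_-,\C)\cong\End^+\m$ and invoke the commutativity of these two tensor factors. The paper's proof is just a one-line reference back to that observation, whereas you spell out the identity $(A\otimes I_{n_-})(I_{n_+}\otimes B)=A\otimes B=(I_{n_+}\otimes B)(A\otimes I_{n_-})$ explicitly, but the content is identical.
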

\begin{proof}
Since $\bbE_\pm$ are subbundles of the algebra bundle $\End^+TM$ whose fibers correspond to the subspaces $\su(n_\pm)\subset\gl(n_\pm,\C)\subset\gl(n_+n_-,\C)\cong\End^+\m$, this follows directly from the observation above.
\end{proof}

\begin{lem}
\label{skewherm}
Under the above identification $\Lambda^{1,1}\m\cong\{\i AB\,|\,A\in\u(n_+),\ B\in\u(n_-)\}$. In particular
\[\bbF=\spann\{\alpha\beta J\,|\,\alpha\in\bbE_+,\ \beta\in\bbE_-\}.\]
\end{lem}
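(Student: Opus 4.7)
The plan is to leverage the tensor product structure $\End^+\m \cong \gl(n_+, \C) \otimes_\C \gl(n_-, \C)$ set up earlier in the text, combined with a dimension count and a few elementary trace computations. Under this identification $J$ corresponds to multiplication by $\i$, so $\Lambda^{1,1}\m$, the space of skew-symmetric endomorphisms commuting with $J$, is precisely $\u(n_+n_-)$, the skew-Hermitian matrices. For $A \in \u(n_+)$ and $B \in \u(n_-)$ embedded respectively as $A \otimes I_{n_-}$ and $I_{n_+} \otimes B$, commutativity gives $AB = A \otimes B$; since $(A \otimes B)^\ast = (-A) \otimes (-B) = A \otimes B$, this product is Hermitian, and hence $\i AB$ is skew-Hermitian, i.e.~lies in $\Lambda^{1,1}\m$.

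To obtain the first assertion I would compare real dimensions. The bilinear map $(A,B) \mapsto A \otimes B$ factors through $\u(n_+) \otimes_\R \u(n_-)$, which has real dimension $n_+^2 n_-^2$. The factored map injects onto the real subspace of Hermitian matrices in $\gl(n_+n_-, \C)$ (of the same real dimension): injectivity follows because any $\R$-basis of $\u(n_\pm)$ remains $\C$-linearly independent in $\gl(n_\pm, \C)$, so the Kronecker products of such bases are $\C$-linearly independent in $\gl(n_+n_-, \C)$, and in particular $\R$-linearly independent. Multiplication by $\i$ then identifies the real span of $\{\i AB\}$ with all of $\u(n_+n_-) \cong \Lambda^{1,1}\m$.

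For the \emph{In particular} statement, I take $\alpha = X \otimes I_{n_-}$ with $X \in \su(n_+)$ and $\beta = I_{n_+} \otimes Y$ with $Y \in \su(n_-)$, which represent arbitrary elements of $\bbE_+$ and $\bbE_-$ via the isotropy embedding. A short calculation gives $\alpha \beta J = \i(X \otimes Y)$, which lies in $\Lambda^{1,1}\m$ by the first part. Three verifications remain: (i) primitivity, since $\tr_\C(X \otimes Y) = \tr(X)\tr(Y) = 0$; (ii) orthogonality to $\bbE_\pm$ with respect to $\langle \cdot, \cdot \rangle_{\Lambda^2} = -\tfrac{1}{2}\tr_\R(\cdot\,\cdot)$, which reduces to the vanishing of $\tr\bigl((X \otimes Y)(X' \otimes I_{n_-})\bigr) = \tr(XX')\tr(Y) = 0$ and its analogue on the $\bbE_-$ side; (iii) a dimension count, using that $\su(n_+) \otimes_\R \su(n_-) \hookrightarrow \gl(n_+n_-, \C)$ is injective, so that the span has real dimension exactly $(n_+^2 - 1)(n_-^2 - 1) = \dim \bbF$, forcing the containment in $\bbF$ to be an equality.

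The one genuine subtlety is bookkeeping the various traces: the primitivity condition $\tr(\alpha J) = 0$ is a priori a condition on the real trace of an $\R$-linear endomorphism of $\m$, which needs to be translated into a condition on the complex trace of the corresponding Hermitian matrix; similarly, the identification of $\langle \cdot, \cdot \rangle_{\Lambda^2}$ with a multiple of the real part of a complex trace of a product of matrices must be verified carefully. Once these identifications are pinned down, the proof reduces to elementary Kronecker-product identities and dimension counting.
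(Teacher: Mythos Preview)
Your proposal is correct and follows essentially the same route as the paper: both identify $\Lambda^{1,1}\m$ with $\u(n_+n_-)$, verify that $\i AB$ is skew-Hermitian (you via $(A\otimes B)^\ast=A\otimes B$ directly, the paper via $\i AB=\tfrac{\i}{2}\{A,B\}\in\i\{\u,\u\}\subset\u$), and conclude by a dimension count. The only minor difference is in the \emph{In particular}: the paper splits $\u(n_\pm)=\su(n_\pm)\oplus\i\R$ to obtain a $K$-invariant direct-sum decomposition of $\u(n_+n_-)$ and identifies $\bbF$ as the leftover summand by invariance, whereas you check primitivity and orthogonality to $\bbE_\pm$ by explicit trace computations---this sidesteps the trace-bookkeeping subtlety you flag, but both arguments end in the same dimension comparison.
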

\begin{proof}
Identifying $\m\cong\C^{n_+}\otimes_\C(\C^{n_-})^\ast$, the space $\Lambda^{1,1}\m$ corresponds precisely to the subspace $\u(n_+n_-)\subset\gl(n_+n_-,\C)$. Following the above discussion, we may write
\[\gl(n_+n_-,\C)=\spann\{AB\,|\,A\in\gl(n_+,\C),\ B\in\gl(n_-,\C)\}.\]
Recall that the anticommutator of two skew-Hermitian matrices is Hermitian. Take now $A\in\u(n_+)$ and $B\in\u(n_-)$. Since they commute we have
\[\i AB=\frac{\i}{2}\{A,B\}\in\i\{\u(n_+n_-),\u(n_+n_-)\}\subset\u(n_+n_-)\cong\Lambda^{1,1}\m.\]
By counting dimensions, we see that these elements actually span $\Lambda^{1,1}\m$. Further splitting $\u(n_\pm)=\su(n_\pm)\oplus\i\R$ we obtain
\[\u(n_+n_-)=\i\R\oplus\su(n_+)\oplus\su(n_-)\oplus\spann\{iAB\,|\,A\in\su(n_+),\ B\in\su(n_-)\}.\]
Thus $\bbF$, being the invariant complement of $\R\omega\oplus\bbE_+\oplus\bbE_-$ in $\Lambda^{1,1} TM$, is associated to the last summand.
\end{proof}


\begin{kor}
\label{anticomm1}
If $\alpha,hJ\in\bbE_\pm$, then $\{h,\alpha\}\in\bbE_\pm\oplus\R\omega$.
\end{kor}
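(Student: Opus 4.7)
The plan is to translate the anticommutator $\{h,\alpha\}$ into the matrix picture provided by the identifications at the start of \S\ref{sec:commanticomm} and Lemma~\ref{skewherm}. First I would note that $\alpha$ and $\beta:=hJ$ both lie in $\bbE_\pm\subset\Lambda^{1,1}TM$, so each commutes with $J$; together with $h=-\beta J$ this gives the rewriting
\[
\{h,\alpha\}=-\{\beta J,\alpha\}=-\bigl(J\beta\alpha+J\alpha\beta\bigr)=-J\{\alpha,\beta\},
\]
reducing the statement to identifying where $\{\alpha,\beta\}$ lives inside $\End^+TM$.

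Next I would pass to $\gl(n_+n_-,\C)$. Under the identification $\End^+\m\cong\gl(n_+n_-,\C)$, the subspace $\bbE_\pm$ is $\su(n_\pm)$ embedded by tensoring with $I_{n_\mp}$, so $\alpha,\beta\in\bbE_\pm$ correspond to skew-Hermitian matrices $A,B$ lying in the \emph{same} tensor factor. The anticommutator of two skew-Hermitian matrices is Hermitian, and the trace just contributes a scalar, so
\[
\{A,B\}\in\R\, I_{n_\pm}\oplus\i\,\su(n_\pm).
\]
Translating back, $I_{n_\pm}\otimes I_{n_\mp}=I_{n_+n_-}$ corresponds to the identity endomorphism of $\m$, and multiplication by $\i$ corresponds to composition with $J$, so $\i\,\su(n_\pm)$ corresponds to $J\,\bbE_\pm$. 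Hence $\{\alpha,\beta\}\in\R\,\Id\oplus J\bbE_\pm$, and applying $-J$ gives
\[
\{h,\alpha\}=-J\{\alpha,\beta\}\in\R\omega\oplus\bbE_\pm,
\]
which is the claim.

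There is no real obstacle here; the only thing that requires care is the bookkeeping between the different identifications — real skew vs.~complex skew-Hermitian for $\alpha$, real symmetric vs.~complex Hermitian for $h$, and the fact that $\R\omega$ corresponds to the scalar line $\i\R\cdot I_{n_+n_-}$ while $\bbE_\pm$ corresponds to the traceless part of $\u(n_\pm)$.
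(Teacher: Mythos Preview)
Your proof is correct and follows essentially the same route as the paper. The paper compresses the argument to the one-line observation $\{\u(n_\pm),\i\u(n_\pm)\}\subset\u(n_\pm)$, which is exactly your computation with the $J$ left inside rather than pulled out first; your version simply unpacks the bookkeeping more explicitly.
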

\begin{proof}
Indeed, $\{\u(n_\pm),\i\u(n_\pm)\}\subset\u(n_\pm)$, which is the fiber of $\bbE_\pm\oplus\R\omega$.
\end{proof}

\begin{kor}
\label{anticomm2}
If $\alpha\in\bbE_\pm$, $hJ\in\bbE_\mp$, then $\{h,\alpha\}\in\bbF$.
\end{kor}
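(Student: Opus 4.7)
The plan is to combine the commutativity of $\bbE_+$ and $\bbE_-$ from Lemma \ref{comm} with the characterization of $\bbF$ from Lemma \ref{skewherm}. Without loss of generality, suppose $\alpha \in \bbE_+$ and $hJ \in \bbE_-$ (the other case is symmetric under swapping the roles of $n_+$ and $n_-$).

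First I would verify that $\{h,\alpha\}$ is indeed a $(1,1)$-form. Since $h$ is symmetric and $\alpha$ is skew-symmetric, a direct transpose computation gives $\{h,\alpha\}^T = -\{h,\alpha\}$, so the anticommutator lies in $\Lambda^2 TM$. Moreover, both $h$ and $\alpha$ commute with $J$, so $\{h,\alpha\}$ does as well; hence $\{h,\alpha\} \in \Lambda^{1,1}TM$. It remains to show that this $(1,1)$-form has no components in $\R\omega \oplus \bbE_+ \oplus \bbE_-$.

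Next I would apply Lemma \ref{comm}, which gives $[\alpha, hJ]=0$. Because $\alpha$ commutes with $J$, this identity rearranges to $(\alpha h)J = (h\alpha) J$; cancelling $J$ yields $\alpha h = h \alpha$. Thus the anticommutator reduces to the plain product
\[\{h,\alpha\} = 2\alpha h = -2\alpha (hJ) J,\]
using $J^2=-\Id$. The right-hand side is of the form $\gamma \beta J$ with $\gamma=\alpha \in \bbE_+$ and $\beta=hJ \in \bbE_-$, so by Lemma \ref{skewherm} it lies in $\bbF$.

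The only point that requires any care is the passage from $[\alpha,hJ]=0$ to $[\alpha,h]=0$, but this is just a one-line manipulation using that $\alpha$ is $J$-linear. Once that is in hand, the conclusion is immediate from Lemma \ref{skewherm}. No new fibrewise computation is needed beyond invoking the two preceding lemmas.
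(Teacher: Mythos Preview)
Your proof is correct and follows exactly the same route as the paper: use Lemma~\ref{comm} to reduce the anticommutator to $2\alpha h$, then recognize this (after inserting $-J^2=\Id$) as an element of the form $\alpha\beta J$ with $\alpha\in\bbE_+$, $\beta\in\bbE_-$, and invoke Lemma~\ref{skewherm}. The only difference is that you spell out the intermediate verifications (skew-symmetry, $J$-linearity, the passage from $[\alpha,hJ]=0$ to $[\alpha,h]=0$) that the paper leaves implicit.
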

\begin{proof}
Since $[\bbE_+,\bbE_-]=0$ by Lemma~\ref{comm} and both $\alpha$ and $h$ commute with $J$, we have $\{h,\alpha\}=2\alpha h$ which lies in $\bbF$ by Lemma~\ref{skewherm}.
\end{proof}

\subsection{Some auxiliary identities}


In order to parametrize the infinitesimal Einstein deformations and work out the second order obstruction to integrability, we introduce a few auxiliary objects and record some additional useful, albeit technical identities.

\begin{defn}
\label{defO}
Let $(\omega^\pm_a)$ always denote a local $\langle\cdot,\cdot\rangle_{\Lambda^2}$-orthonormal frame of $\bbE_\pm$.
\begin{enumerate}[(i)]
 \item We define the $4$-forms $\Omega_\pm:=\sum_a\omega^\pm_a\wedge\omega^\pm_a$ and $\Omega:=\frac{n_-^2-1}{2n}\Omega_+-\frac{n_+^2-1}{2n}\Omega_-$.
 \item For any $A\in\End TM$, we define $C_\pm(A):=\sum_a\omega^\pm_aA\omega^\pm_a$. Here we again identify $2$-forms with skew-symmetric endomorphisms.
\end{enumerate}
\end{defn}

\begin{bem}
\label{Oparallel}
It is not hard to see that the above definitions are independent of the choice of basis. The $4$-forms $\Omega_\pm$ serve the same purpose as the Kraines form in the quaternionic Kähler case $n_+=2$ (or $n_-=2$) \cite{NS23}. Since the bundles $\bbE_\pm$ are parallel and the $\omega^\pm_a$ are skew-symmetric, it follows that also the $\Omega_\pm$ and thus $\Omega$ are parallel.
\end{bem}

\begin{lem}
\label{Cendo}
The operators $C_\pm$ can be expressed as
\[C_\pm(A)=\frac{1}{2}\Cas^{\su(n_\pm),\langle\cdot,\cdot\rangle_{\Lambda^2}}_{\gl(\m)}(A)-\frac{n_\pm^2-1}{n_+n_-}A.\]
In particular $C_\pm$ is $\su(n_\pm)$- and thus $\k$-equivariant, i.e.
\[C_\pm([\alpha,A])=[\alpha,C_\pm(A)]\qquad\forall\alpha\in\R\omega\oplus\bbE_+\oplus\bbE_-,\]
and we have $C_\pm(A)=\frac{1}{n_+n_-}A$ if $A\in\bbE_\pm\oplus\bbF$ and $C_\pm(A)=-\frac{n_\pm^2-1}{n_+n_-}A$ if $A\in\R\omega\oplus\bbE_\mp$.
\end{lem}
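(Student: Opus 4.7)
The approach is to rewrite $C_\pm$ as half a Casimir operator plus a scalar correction. Expanding the double commutator $[\omega,[\omega,A]]=\omega^2 A-2\omega A\omega+A\omega^2$ for $\omega$ skew-symmetric and summing over a $\langle\cdot,\cdot\rangle_{\Lambda^2}$-orthonormal basis of $\bbE_\pm$ produces
\[\Cas^{\su(n_\pm),\langle\cdot,\cdot\rangle_{\Lambda^2}}_{\gl(\m)}(A)=2C_\pm(A)-\bigl(\textstyle\sum_a(\omega_a^\pm)^2\bigr)A-A\bigl(\textstyle\sum_a(\omega_a^\pm)^2\bigr),\]
where $\su(n_\pm)$ acts on $\gl(\m)$ by commutator. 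One then recognizes $\sum_a(\omega_a^\pm)^2$, viewed as a symmetric endomorphism of $\m$, as $-\Cas^{\su(n_\pm),\langle\cdot,\cdot\rangle_{\Lambda^2}}_\m$, which Cor.~\ref{casvalues} identifies with $-\tfrac{n_\pm^2-1}{n_+n_-}\Id_\m$. Substitution yields the stated formula.

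Equivariance under $\su(n_\pm)$ is immediate from the Casimir presentation. To extend to $\k$-equivariance, I would note that $\omega$ is central in $\k$ and that $[\bbE_+,\bbE_-]=0$ by Lemma~\ref{comm}; hence any $\alpha\in\R\omega\oplus\bbE_\mp$ commutes with every $\omega_a^\pm$ as an endomorphism of $\m$, so $C_\pm([\alpha,A])=[\alpha,C_\pm(A)]$ follows by direct expansion of the defining sum.

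The explicit eigenvalues are read off by identifying each subbundle as an $\su(n_\pm)$-representation and specializing to the shape $C_\pm(A)=(\tfrac{\lambda}{2}-\tfrac{n_\pm^2-1}{n_+n_-})A$ on an isotypical component with Casimir eigenvalue $\lambda$. The fibres of $\bbE_\pm$ carry the adjoint representation of $\su(n_\pm)$, and those of $\bbF$ are, by Lemma~\ref{skewherm}, $(n_\mp^2-1)$ copies of the same, so $\lambda=\tfrac{2n_\pm}{n_\mp}$ from Cor.~\ref{casvalues} and a short cancellation produce $\tfrac{1}{n_+n_-}$. On the trivial $\su(n_\pm)$-modules $\R\omega$ and $\bbE_\mp$ one has $\lambda=0$, giving the second formula. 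No substantive obstacle is expected; the only point that requires care is the consistent use of the normalization $\langle\cdot,\cdot\rangle_{\Lambda^2}$ throughout, since Cor.~\ref{casvalues} is tied to this specific inner product.
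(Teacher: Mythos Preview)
Your proposal is correct and follows essentially the same route as the paper: expand the double commutator to express $\Cas^{\su(n_\pm)}_{\gl(\m)}$ in terms of $C_\pm$ and the anticommutator with $\Cas^{\su(n_\pm)}_\m$, invoke Corollary~\ref{casvalues} for the scalar correction, deduce $\k$-equivariance from the Casimir presentation together with $[\bbE_+,\bbE_-]=0$ and the centrality of $\omega$, and read off the eigenvalues by identifying the $\su(n_\pm)$-isotypical components of $\Lambda^{1,1}\m$. The only cosmetic difference is that the paper groups $\bbE_\pm\oplus\bbF$ as $n_\mp^2$ copies of the adjoint representation, whereas you split this as $1+(n_\mp^2-1)$ copies.
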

\begin{proof}
As $\su(n_\pm)\subset\so(\m)$ acts on $\gl(\m)$ by the commutator, we calculate
\begin{align*}
 \Cas^{\su(n_\pm),\langle\cdot,\cdot\rangle_{\Lambda^2}}_{\gl(\m)}(A)&=-\sum_a[\omega^\pm_a,[\omega^\pm_a,A]]=\sum_a(2\omega^\pm_aA\omega^\pm_a-A(\omega^\pm_a)^2-(\omega^\pm_a)^2A)\\
 &=2C_\pm(A)+\{A,\Cas^{\su(n_\pm),\langle\cdot,\cdot\rangle_{\Lambda^2}}_{\m}\}.
\end{align*}
Corollary~\ref{casvalues} now implies the first assertion. The equivariance under $\k$ follows from the fact that $\Cas^{\su(n_\pm)}$ commutes with the action of $\su(n_\pm)$ and that $[\bbE_+,\bbE_-]=0=[\bbE_\pm,J]$.

Restricting the $\k$-module
\[\Lambda^{1,1}\m\cong\R\oplus\su(n_+)\oplus\su(n_-)\oplus\su(n_+)\otimes\su(n_-)\]
to $\su(n_\pm)$, we observe that $\su(n_\pm)$ acts trivially on $\R\oplus\su(n_\mp)$, while the rest is equivalent to $n_\mp^2$ copies of the adjoint representation of $\su(n_\pm)$. By Corollary~\ref{casvalues} we obtain
\begin{align*}
 C_\pm(A)&=-\frac{n_\pm^2-1}{n_+n_-}A,&A&\in\R\omega\oplus\bbE_\mp,\\
 C_\pm(A)&=\left(\frac{1}{2}\cdot\frac{2n_\pm}{n_\mp}-\frac{n_\pm^2-1}{n_+n_-}\right)A=\frac{1}{n_+n_-}A,&A&\in\bbE_\pm\oplus\bbF.
\end{align*}
\end{proof}

The following lemma is elementary but stated for convenience.

\begin{lem}
\label{wedgein}
For any $\alpha,\beta,\gamma\in\Lambda^2TM$, we have
\[\alpha\intprod(\beta\wedge\gamma)=\langle\alpha,\beta\rangle\gamma+\langle\alpha,\gamma\rangle\beta+\beta\alpha\gamma+\gamma\alpha\beta.\]
In particular
\[\alpha\intprod(\beta\wedge\beta)=2\langle\alpha,\beta\rangle\beta+2\beta\alpha\beta.\]
\end{lem}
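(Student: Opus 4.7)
The plan is to reduce to the case of decomposable $\alpha$ and then expand by the graded Leibniz rule. Both sides of the identity are trilinear in $(\alpha,\beta,\gamma)$: on the right, the combination $\beta\alpha\gamma+\gamma\alpha\beta$, although written as a composition of endomorphisms, is skew-symmetric (since transposing reverses the order and introduces three sign changes) and therefore does define a genuine $2$-form. By linearity it thus suffices to verify the identity for $\alpha=X\wedge Y$ with $X,Y\in T_pM$. In that case $\alpha\intprod\phi=\iota_Y\iota_X\phi$ for any form $\phi$, while $\langle\alpha,\beta\rangle=\beta(X,Y)$, and the skew endomorphism associated to $\alpha$ acts as $\alpha U=g(X,U)Y-g(Y,U)X$.

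The next step is to compute $\iota_Y\iota_X(\beta\wedge\gamma)$ by applying the graded Leibniz rule for interior products twice. A short calculation yields
\[
\iota_Y\iota_X(\beta\wedge\gamma)=\beta(X,Y)\,\gamma+\gamma(X,Y)\,\beta+(\iota_Y\beta)\wedge(\iota_X\gamma)-(\iota_X\beta)\wedge(\iota_Y\gamma).
\]
The first two summands are exactly $\langle\alpha,\beta\rangle\gamma+\langle\alpha,\gamma\rangle\beta$, matching the first two terms on the right-hand side of the claim.

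The only remaining task — and the sole step that involves any actual computation — is to identify the wedge-product expression $(\iota_Y\beta)\wedge(\iota_X\gamma)-(\iota_X\beta)\wedge(\iota_Y\gamma)$ with the skew endomorphism $\beta\alpha\gamma+\gamma\alpha\beta$. I would do this by evaluating both sides on an arbitrary pair $(U,V)$: the wedge side unpacks immediately into the four bilinear terms $\beta(Y,U)\gamma(X,V)-\beta(Y,V)\gamma(X,U)-\beta(X,U)\gamma(Y,V)+\beta(X,V)\gamma(Y,U)$, while the endomorphism side, after inserting $\alpha U=g(X,U)Y-g(Y,U)X$ and using $g(\beta W,V)=\beta(W,V)$ together with the skew-symmetry of $\beta$ and $\gamma$, produces exactly the same four terms. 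The ``in particular'' statement then follows at once by specializing $\gamma=\beta$. No real obstacle arises; one only has to keep careful track of signs in the Leibniz rule and of the metric identification between $2$-forms and skew endomorphisms.
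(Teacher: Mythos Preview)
Your proof is correct. The paper does not actually supply a proof of this lemma; it merely introduces it with the sentence ``The following lemma is elementary but stated for convenience'' and moves on. Your reduction to decomposable $\alpha=X\wedge Y$, application of the graded Leibniz rule, and term-by-term identification of $(\iota_Y\beta)\wedge(\iota_X\gamma)-(\iota_X\beta)\wedge(\iota_Y\gamma)$ with $\beta\alpha\gamma+\gamma\alpha\beta$ under the convention $g(\alpha U,V)=\alpha(U,V)$ is exactly the kind of routine verification the authors had in mind.
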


We shall now take a closer look at the $4$-forms $\Omega_\pm$ and $\Omega$.

\begin{lem}
\label{XinOp}
For any $X\in TM$,
\begin{enumerate}[\upshape(i)]
 \item $\sum_ie_i\wedge(e_i\wedge X)_\pm=-\frac{1}{2}X\intprod\Omega_\pm$,
 \item $\sum_ie_i\intprod(e_i\wedge X)_\pm=\frac{n^2_\pm-1}{n_+n_-}X$.
\end{enumerate}
\end{lem}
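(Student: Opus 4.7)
The plan is to work basis-by-basis in the subbundle $\bbE_\pm$. Since $(\omega_a^\pm)$ is a local $\langle\cdot,\cdot\rangle_{\Lambda^2}$-orthonormal frame of $\bbE_\pm$, I would first write the orthogonal projection of the two-form $e_i\wedge X$ onto $\bbE_\pm$ as
\[(e_i\wedge X)_\pm=\sum_a\langle e_i\wedge X,\omega_a^\pm\rangle_{\Lambda^2}\,\omega_a^\pm=\sum_a\omega_a^\pm(e_i,X)\,\omega_a^\pm,\]
using that for simple two-forms, $\langle e_i\wedge X,\omega_a^\pm\rangle_{\Lambda^2}=\omega_a^\pm(e_i,X)$ (which follows directly from the definition \eqref{lam2inprod}). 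This single identity drives both parts of the lemma.

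For (i), I would substitute this expansion, swap the order of summation over $i$ and $a$, and recognize $\sum_i\omega_a^\pm(e_i,X)e_i=-X\intprod\omega_a^\pm$ as a one-form via the metric. This gives
\[\sum_ie_i\wedge(e_i\wedge X)_\pm=-\sum_a(X\intprod\omega_a^\pm)\wedge\omega_a^\pm.\]
On the other side, applying the derivation property of the interior product on $\Omega_\pm=\sum_a\omega_a^\pm\wedge\omega_a^\pm$ yields $X\intprod\Omega_\pm=2\sum_a(X\intprod\omega_a^\pm)\wedge\omega_a^\pm$ (the two terms coincide since a one-form commutes in the wedge past a two-form). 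Comparing the two expressions gives the claimed formula.

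For (ii), the same substitution produces
\[\sum_ie_i\intprod(e_i\wedge X)_\pm=\sum_{i,a}\omega_a^\pm(e_i,X)\,(e_i\intprod\omega_a^\pm),\]
and I would again identify $e_i\intprod\omega_a^\pm$ with $\omega_a^\pm e_i$ via the metric. Collapsing the sum over $i$ using $\sum_i\omega_a^\pm(e_i,X)e_i=-\omega_a^\pm X$ turns the whole right-hand side into $-\sum_a(\omega_a^\pm)^2 X$. By definition of the Casimir,
\[-\sum_a(\omega_a^\pm)^2=\Cas^{\su(n_\pm),\langle\cdot,\cdot\rangle_{\Lambda^2}}_\m,\]
which by Corollary~\ref{casvalues} equals $\frac{n_\pm^2-1}{n_+n_-}\Id$, yielding (ii).

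I expect no conceptual obstacle here: the only thing to watch is the bookkeeping of sign conventions when switching between the two-form $\omega_a^\pm$ and its associated skew-symmetric endomorphism $X\mapsto\omega_a^\pm X$, and the order of arguments in $\omega_a^\pm(\cdot,\cdot)$. Once the basis expansion of $(e_i\wedge X)_\pm$ is in place, both identities are one rearrangement of indices away — in particular, (ii) is essentially a repackaging of the Casimir eigenvalue computation already carried out in Corollary~\ref{casvalues}.
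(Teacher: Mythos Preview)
Your argument is correct and follows essentially the same route as the paper: both proofs expand $(e_i\wedge X)_\pm$ in the orthonormal frame $(\omega_a^\pm)$, use the derivation property of $X\intprod$ on $\Omega_\pm$ for (i), and identify the sum $-\sum_a(\omega_a^\pm)^2$ with the Casimir operator on $\m$ (via Corollary~\ref{casvalues}) for (ii). The only cosmetic difference is that the paper runs (i) starting from $X\intprod\Omega_\pm$ and unwinds toward the left-hand side, whereas you go the other way.
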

\begin{proof}
\begin{enumerate}[(i)]
 \item We calculate
 \begin{align*}
 X\intprod\Omega_\pm&=2\sum_a(X\intprod\omega^\pm_a)\wedge\omega^\pm_a=2\sum_{i,a}\langle e_i,X\intprod\omega^\pm_a\rangle e_i\wedge\omega^\pm_a\\
 &=2\sum_{i,a}\langle X\wedge e_i,\omega^\pm_a\rangle e_i\wedge\omega^\pm_a=-2\sum_ie_i\wedge(e_i\wedge X)_\pm.
 \end{align*}
 \item Similarly,
 \begin{align*}
 \sum_ie_i\intprod(e_i\wedge X)_\pm&=-\sum_{i,a}\langle X\wedge e_i,\omega^\pm_a\rangle\omega^\pm_a(e_i)=-\sum_{i,a}\langle e_i,X\intprod\omega^\pm_a\rangle\omega^\pm_a(e_i)\\
 &=-\sum_a(\omega^\pm_a)^2(X)=\Cas^{\su(n_\pm),\langle\cdot,\cdot\rangle_{\Lambda^2}}_\m(X),
 \end{align*}
 and the assertion now follows from Corollary~\ref{casvalues}.
\end{enumerate}
\end{proof}

\begin{lem}
\label{inOp}
For any $\alpha\in\Lambda^2TM$, we have We note $\alpha\intprod\Omega_\pm=2\alpha_\pm+2C_\pm(\alpha)$.
\end{lem}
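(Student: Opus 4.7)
The plan is to expand $\alpha \intprod \Omega_\pm$ termwise using the definition $\Omega_\pm = \sum_a \omega^\pm_a \wedge \omega^\pm_a$ and the contraction identity from Lemma~\ref{wedgein}, then recognize the two resulting sums as, respectively, the orthogonal projection onto $\bbE_\pm$ and the operator $C_\pm$.

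Concretely, I would write
\[
 \alpha \intprod \Omega_\pm = \sum_a \alpha \intprod (\omega^\pm_a \wedge \omega^\pm_a)
\]
and apply the special case $\alpha \intprod (\beta \wedge \beta) = 2\langle \alpha, \beta\rangle \beta + 2\beta\alpha\beta$ from Lemma~\ref{wedgein} with $\beta = \omega^\pm_a$, yielding
\[
 \alpha \intprod \Omega_\pm = 2\sum_a \langle \alpha, \omega^\pm_a\rangle_{\Lambda^2} \omega^\pm_a + 2\sum_a \omega^\pm_a \alpha \omega^\pm_a.
\]
The second sum is precisely $2C_\pm(\alpha)$ by Definition~\ref{defO}(ii). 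For the first sum, since $(\omega^\pm_a)$ is a $\langle \cdot,\cdot\rangle_{\Lambda^2}$-orthonormal frame of $\bbE_\pm$, it represents the orthogonal projection of $\alpha$ onto $\bbE_\pm$, which is exactly $\alpha_\pm$.

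There is essentially no hard step here; the only thing to verify is that the projection $\alpha_\pm$ of an arbitrary $2$-form $\alpha \in \Lambda^2 TM$ onto the subbundle $\bbE_\pm \subset \Lambda^{1,1}_0 T^\ast M \subset \Lambda^2 TM$ agrees with $\sum_a \langle \alpha, \omega^\pm_a\rangle_{\Lambda^2} \omega^\pm_a$, which is immediate from orthonormality of the frame. Combining the two pieces gives $\alpha \intprod \Omega_\pm = 2\alpha_\pm + 2C_\pm(\alpha)$, as claimed.
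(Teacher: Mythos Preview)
Your proof is correct and follows exactly the same approach as the paper: expand $\Omega_\pm$ as $\sum_a \omega^\pm_a \wedge \omega^\pm_a$, apply the special case of Lemma~\ref{wedgein}, and identify the two resulting sums as $2\alpha_\pm$ and $2C_\pm(\alpha)$ via orthonormality of the frame and Definition~\ref{defO}(ii). The paper's proof is simply a terser one-line version of what you wrote.
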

\begin{proof}
It follows from Lemma~\ref{wedgein} that
\begin{align*}
\alpha\intprod\Omega_\pm&=\sum_a\alpha\intprod(\omega_a^\pm\wedge\omega_a^\pm)=\sum_a(2\langle\alpha,\omega_a^\pm\rangle\omega_a^\pm+2\omega_a^\pm\alpha\omega_a^\pm)=2\alpha_\pm+2C_\pm(\alpha).
\end{align*}
\end{proof}

\begin{lem}
\label{Oprim}
The $4$-form $\Omega$ is primitive, i.e.~$\omega\intprod\Omega=0$.
\end{lem}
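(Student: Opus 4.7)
The plan is to reduce the statement to Lemma \ref{inOp} and Lemma \ref{Cendo}, which together evaluate $\omega \intprod \Omega_\pm$ explicitly. The key observation is that $\omega$ lies in the trivial summand $\R\omega$ of the decomposition $\Lambda^{1,1}_0 T^\ast M = \bbE_+ \oplus \bbE_- \oplus \bbF$, so both the projections onto $\bbE_\pm$ and the Casimir-type operators $C_\pm$ act on $\omega$ by explicit scalars.

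Concretely, the first step is to apply Lemma \ref{inOp} to each summand of $\Omega$:
\[\omega \intprod \Omega_\pm = 2\omega_\pm + 2 C_\pm(\omega).\]
Since $\omega \in \R\omega$ and $\R\omega$ is the orthogonal complement of $\bbE_\pm$ inside $\Lambda^{1,1}TM$, the projection $\omega_\pm$ vanishes. The second step is to note that $\omega$ lies in $\R\omega \oplus \bbE_\mp$, so by Lemma \ref{Cendo} one has
\[C_\pm(\omega) = -\frac{n_\pm^2 - 1}{n_+ n_-}\omega.\]
Combining these yields $\omega \intprod \Omega_\pm = -\frac{2(n_\pm^2 - 1)}{n_+ n_-}\omega$.

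The final step is simply to plug these values into the definition $\Omega = \frac{n_-^2-1}{2n}\Omega_+ - \frac{n_+^2-1}{2n}\Omega_-$ and check that the two contributions cancel:
\[\omega \intprod \Omega = \frac{n_-^2-1}{2n}\cdot\left(-\frac{2(n_+^2-1)}{n_+ n_-}\omega\right) - \frac{n_+^2-1}{2n}\cdot\left(-\frac{2(n_-^2-1)}{n_+ n_-}\omega\right) = 0.\]
This indeed vindicates the particular coefficients chosen in the definition of $\Omega$ in Definition \ref{defO}(i). There is no real obstacle here; the only subtlety is recognizing that the coefficients in $\Omega$ were designed precisely so that the $C_\pm$-eigenvalues on $\omega$ cancel, which is what makes $\Omega$ primitive (as opposed to each $\Omega_\pm$ individually).
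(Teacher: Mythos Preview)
Your proof is correct and follows essentially the same approach as the paper: both apply Lemma~\ref{inOp} together with Lemma~\ref{Cendo} (using $\omega_\pm=0$) to obtain $\omega\intprod\Omega_\pm=-\frac{2(n_\pm^2-1)}{n_+n_-}\omega$, and then verify that the coefficients in Definition~\ref{defO}(i) make these contributions cancel. The paper's version is simply more terse.
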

\begin{proof}
Since $\omega\perp\bbE_p$, Lemma~\ref{Cendo} and Lemma~\ref{inOp} imply that
\[\omega\intprod\Omega_\pm=2C_\pm(\omega)=-2\frac{n_\pm^2-1}{n_+n_-}\omega\]
and one checks using the definition of $\Omega$ that $\omega\intprod\Omega=0$.
\end{proof}

\begin{kor}
\label{Onondeg}
The $4$-forms $\Omega_\pm$ and $\Omega$ are all nondegenerate in the sense that if $X\intprod\Omega_\pm=0$ or $X\intprod\Omega=0$ for any $X\in TM$, then $X=0$.
\end{kor}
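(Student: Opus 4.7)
My plan is to reduce the corollary to a nonvanishing statement for the $4$-forms themselves, and then verify that nonvanishing by direct computation using Lemmas~\ref{Cendo} and \ref{inOp}.

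First I would note that since $(M,g)$ is an irreducible Hermitian symmetric space, the isotropy representation $\m\cong T_pM$ is irreducible over $\R$ as a $K$-module. For any fixed $K$-invariant $4$-form $\Theta$ (here $\Theta\in\{\Omega_+,\Omega_-,\Omega\}$), the linear map $T_\Theta:\m\to\Lambda^3\m$, $X\mapsto X\intprod\Theta$, is $K$-equivariant. By Schur's lemma $\ker T_\Theta$ is either $\{0\}$ or all of $\m$, and the latter happens precisely when $\Theta=0$ (since $\Theta$ is determined by all its vectorial contractions). Hence the corollary reduces to checking $\Omega_\pm\neq 0$ and $\Omega\neq 0$.

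For $\Omega_\pm$ I would test against the Kähler form $\omega$. Since $\omega\perp\bbE_\pm$ and $C_\pm(\omega)=-\frac{n_\pm^2-1}{n_+n_-}\omega$ by Lemma~\ref{Cendo}, Lemma~\ref{inOp} immediately gives
\[
\omega\intprod\Omega_\pm=-\frac{2(n_\pm^2-1)}{n_+n_-}\omega\neq 0.
\]
For $\Omega$, I would test against any nonzero $\alpha\in\bbE_+$; the same two lemmas produce $\alpha\intprod\Omega_+=\frac{2(n_+n_-+1)}{n_+n_-}\alpha$ and $\alpha\intprod\Omega_-=-\frac{2(n_-^2-1)}{n_+n_-}\alpha$, which after substitution into the definition of $\Omega$ and a short simplification collapse to $\alpha\intprod\Omega=\frac{n_-^2-1}{n_-}\alpha\neq 0$ (using $n_-\geq 2$). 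Combined with Schur's lemma, this closes the argument.

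The only conceptual subtlety, rather than a real obstacle, is recognizing that Schur's lemma short-circuits what would otherwise require a direct analysis of the vectorial contraction map (for instance via Lemma~\ref{XinOp}). Once the reduction to nonvanishing of the invariant $4$-forms is in place, the entire argument amounts to two brief pointwise calculations from previously established formulas.
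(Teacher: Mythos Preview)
Your proposal is correct and follows essentially the same approach as the paper: reduce nondegeneracy to nonvanishing via irreducibility, then verify nonvanishing by a pointwise contraction computation using Lemmas~\ref{Cendo} and \ref{inOp}. The only cosmetic difference is that the paper invokes the \emph{holonomy principle} (the kernel of contraction with a parallel form is a parallel subbundle, hence trivial by irreducible holonomy) where you invoke Schur's lemma for the isotropy representation; on a symmetric space these are the same statement, and your choice of test elements ($\omega$ for $\Omega_\pm$, an $\alpha\in\bbE_+$ for $\Omega$) works just as well as the paper's.
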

\begin{proof}
This is a consequence of the holonomy principle: indeed, the kernels of the maps $X\mapsto X\intprod\Omega_\pm$, $X\mapsto X\intprod\Omega$ would define parallel subbundles of $TM$. Since $(M,g)$ has irreducible holonomy, it follows that these $4$-forms are either nondegenerate or vanish. However Lemma~\ref{Cendo} and Lemma~\ref{inOp} imply that
\[\alpha\intprod\Omega_\pm=\frac{n_+n_-+1}{n_+n_-}\alpha\]
for any $\alpha\in\bbE_\pm$, so $\Omega_\pm$ cannot vanish. A similar computation shows that the same is true for the linear combination $\Omega$.
\end{proof}

\section{Killing fields and differential identities}
\label{sec:diff}

 
We recall that a Killing vector field $X$ on a Kähler manifold $(M, g, J)$ is Hamiltonian with respect to the Kähler form $\omega$, i.e. $L_X\omega = 0$. Hence, if $M$ is compact and simply connected, we have a globally defined function $z_X$, called Killing potential uniquely determined from  $X \intprod \omega = dz_X$ and $\int_M z_X \vol = 0$. Note that the equation $L_XJ=0$ also implies that $dX$ is in $\Omega^{1,1}(M)$.

Assume that $g$ is a Kähler--Einstein metric of Einstein constant $\frac12$. Then Killing vector fields and their Killing potentials are both in the kernel of $\Delta - \Id$, where $\Delta=d^\ast d+dd^\ast$ is the Laplacian defined by the metric $g$. In fact, by Matsushima's Theorem, the map $X \mapsto z_X$ defines an isomorphism between the space of  Killing vector fields $\isom(M,g)$ and the (minimal) eigenspace $\ker (\Delta - \Id)$ on functions. In particular, we have $\Ltwoinprod{X}{Y} =  \Ltwoinprod{z_X}{z_Y}$ and we see that if a Killing potential is $L^2$-orthogonal to all other Killing potentials, it has to vanish.

By a theorem of Kostant \cite[Thm.~3.3]{kostant}, if $X$ is a Killing vector field on a compact manifold $(M,g)$, the skew-symmetric endomorphism $dX$ is contained in the Riemannian holonomy algebra of $g$ at each point. Hence, for the complex Grassmannians, $dX$ takes values in the subbundle $\R\omega\oplus\bbE_+\oplus\bbE_-$ with fiber $\k\subset\Lambda^{1,1}\m$.

\begin{bem}
 As mentioned in \S\ref{sec:grassmann}, the Killing vector fields on the complex Grassmannian $M=G/K=\SU(n)/\rmS(\U(n_+)\times\U(n_-))$ are precisely the fundamental vector fields generated by the action of $G=\SU(n)$. For such a fundamental vector field $\tilde X$ generated by $X\in\g$, clearly its value at the identity coset only depends on its $\m$-part -- on the other hand, its exterior derivative is (at the identity coset) given by
\[(d\tilde X)_o(Y,Z)=g([X,Y],Z),\qquad Y,Z\in\m\cong T_oM,\]
cf.~\cite[Lem.~7.27]{besse}. By the Cartan relation $[\m,\m]\subset\k$, we see that this only depends on the $\k$-part of the Lie algebra element $X$, which is consistent with the conclusion above.
\end{bem}

\begin{lem}
\label{dXprim}
If $X$ is a Killing vector field,
\[(dX)_\prim=(dX)_++(dX)_-=dX-\frac{z_X}{n_+n_-}\omega.\]
\end{lem}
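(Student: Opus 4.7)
The first equality is immediate from Kostant's theorem, cited immediately above the lemma: $dX$ pointwise takes values in the Riemannian holonomy algebra, which in this case is the subbundle $\R\omega \oplus \bbE_+ \oplus \bbE_-$. Since $\bbE_\pm \subset \Lambda^{1,1}_0 T^\ast M$ while $\omega$ spans the orthogonal complement of the primitive part inside $\Lambda^{1,1}T^\ast M$, writing $dX = c\omega + (dX)_+ + (dX)_-$ for some smooth function $c \in C^\infty(M)$ exhibits $(dX)_\prim = (dX)_+ + (dX)_-$, and the whole task reduces to identifying $c$.

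The plan for the second equality is to extract $c$ by projecting onto $\omega$: pointwise orthogonality of the three summands gives $c\|\omega\|^2 = \langle dX, \omega\rangle$. The denominator is computed at once from $\omega = J$ and \eqref{lam2inprod}: $\|\omega\|^2 = -\tfrac12 \tr(J^2) = \tfrac{\dim M}{2} = n_+n_-$. For the numerator, the Killing condition enters through the standard identification of $dX$ with the skew endomorphism $Y \mapsto 2\nabla_Y X$. Expanding in a local orthonormal frame $(e_i)$ one obtains
\[
\langle dX, \omega\rangle \;=\; -\tfrac12 \tr\bigl((dX)J\bigr) \;=\; \sum_i g(\nabla_{e_i} X, Je_i),
\]
where the Killing symmetry $g(\nabla_Y X, Z) = -g(\nabla_Z X, Y)$ rearranges the two natural forms of the trace.

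The final step is to recognize this sum as $z_X$, which is where the Kähler potential of $X$ appears. From $X \intprod \omega = dz_X$ together with $\omega(\cdot,\cdot) = g(J\cdot,\cdot)$ one reads off $JX = \grad z_X$. Since $J$ is parallel on the Kähler manifold,
\[
g(\nabla_{e_i} X, Je_i) = -g(\nabla_{e_i}(JX), e_i) = -g(\nabla_{e_i}\grad z_X, e_i),
\]
whose sum over $i$ is $-\operatorname{div}(\grad z_X) = \Delta z_X$, with the sign matching the paper's convention $\Delta = d^\ast d + dd^\ast$. Since Matsushima's theorem (recalled at the start of \S\ref{sec:diff}) places $z_X$ in $\ker(\Delta - \Id)$, we conclude $\langle dX, \omega\rangle = z_X$ and hence $c = z_X/(n_+n_-)$. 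There is no real obstacle here; the only care required is with sign conventions and the identification of $\omega$ with $J$.
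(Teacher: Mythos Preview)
Your proof is correct and follows essentially the same route as the paper: decompose $dX$ in the holonomy subbundle $\R\omega\oplus\bbE_+\oplus\bbE_-$, compute $|\omega|^2=n_+n_-$, and evaluate $\langle dX,\omega\rangle$ via $dX=2\nabla X$ and the parallelity of $J$ to obtain $d^\ast(JX)=d^\ast dz_X=z_X$. The only cosmetic difference is that you spell out the step $\Delta z_X=z_X$ via Matsushima's theorem, whereas the paper invokes this eigenvalue directly.
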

\begin{proof}
Since $dX$ takes values in $\R\omega\oplus\bbE_+\oplus\bbE_-$, we know that
\[dX=(dX)_++(dX)_-+\frac{\langle dX,\omega\rangle}{|\omega|^2}\omega.\]
We have $|\omega|^2=-\frac{1}{2}\tr(J^2)=\dim_\C M=n_+n_-$. Recall that $X$ being Killing is equivalent to $dX=2\nabla X$. Using that $J$ and $g$ are parallel, we may thus calculate
\begin{align*}
\langle dX,\omega\rangle&=\frac{1}{2}\sum_i\langle Je_i,e_i\intprod dX\rangle=-\sum_i\langle e_i,\nabla_{e_i}(JX)\rangle\\
&=d^\ast JX=d^\ast dz_X=z_X,
\end{align*}
and the assertion follows.
\end{proof}

\begin{lem}
\label{ddX}
Any Killing vector field $X$ satisfies the following identities:
\begin{enumerate}[\upshape(i)]
 \item $\displaystyle d(dX)_\pm=\frac{n_\mp}{2n}X\intprod\Omega_\pm$,
 \item $\displaystyle d^\ast(dX)_\pm=\frac{n_\pm^2-1}{n_\pm n}X$.
 \item $\displaystyle d|(dX)_\pm|^2=\frac{2n_\mp}{n}X\intprod(dX)_\pm$.
 \item $\displaystyle \Delta|(dX)_\pm|^2=\frac{2n_\mp}{n}|(dX)_\pm|^2-\frac{2(n_\pm^2-1)n_\mp}{n_\pm n^2}|X|^2$.
\end{enumerate}
\end{lem}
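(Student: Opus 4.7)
\bigskip
\noindent\textbf{Proof plan for Lemma \ref{ddX}.}
The strategy is to first derive a single master formula for $(\nabla_Z dX)_\pm$, and then obtain all four identities as immediate consequences of it by combining with the auxiliary lemmas.

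The starting point is the classical Kostant identity for a Killing vector field $X$ on a Riemannian manifold, $\nabla_Z\nabla_Y X = R(Z,X)Y$ (at a point where $\nabla_ZY=0$). Together with the defining formula $dX(Y,W)=2g(\nabla_YX,W)$, a short calculation at a normal point yields
\[(\nabla_Z dX)(Y,W) = -2\,g(R(X,Z)Y,W) = -2\,\curvop(X\wedge Z)(Y,W),\]
so $\nabla_Z dX = -2\curvop(X\wedge Z)$ as a $2$-form. Since the $\curvop$-eigenspace decomposition respects the bundles $\bbE_\pm$ with eigenvalue $-\tfrac{n_\mp}{2n}$ (Lemma~\ref{curveigen}) and the projections onto $\bbE_\pm$ are parallel, we obtain the master formula
\[(\nabla_Z dX)_\pm \;=\; -2\curvop\bigl((X\wedge Z)_\pm\bigr) \;=\; \frac{n_\mp}{n}(X\wedge Z)_\pm.\]

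With this in hand, parts (i)--(iii) are essentially one-line applications. For (i), one writes $d(dX)_\pm = \sum_i e_i\wedge(\nabla_{e_i}dX)_\pm = \tfrac{n_\mp}{n}\sum_i e_i\wedge (X\wedge e_i)_\pm$ and applies Lemma~\ref{XinOp}(i) (noting $X\wedge e_i = -e_i\wedge X$). For (ii), $d^\ast(dX)_\pm = -\sum_i e_i\intprod(\nabla_{e_i}dX)_\pm$ reduces via Lemma~\ref{XinOp}(ii) to a multiple of $X$, and a short arithmetic simplification yields the stated constant. For (iii), using that $|(dX)_\pm|^2$ is a function, $d|(dX)_\pm|^2(Y) = 2\langle\nabla_Y(dX)_\pm,(dX)_\pm\rangle$, and substituting the master formula gives $\tfrac{2n_\mp}{n}\langle X\wedge Y,(dX)_\pm\rangle$, which equals $\tfrac{2n_\mp}{n}(X\intprod(dX)_\pm)(Y)$ under the standard pairing of $2$-forms with decomposable elements.

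The bulk of the work lies in (iv), where one applies $d^\ast$ to the identity in (iii):
\[\Delta|(dX)_\pm|^2 = \frac{2n_\mp}{n}\,d^\ast\bigl(X\intprod(dX)_\pm\bigr).\]
Expanding $\nabla_{e_i}(X\intprod(dX)_\pm) = (\nabla_{e_i}X)\intprod(dX)_\pm + X\intprod(\nabla_{e_i}dX)_\pm$, using $\nabla_{e_i}X = \tfrac12 dX(e_i)$, and taking the trace produces two contributions. The first yields $\tfrac12\tr((dX)_\pm\circ dX) = -\langle (dX)_\pm,dX\rangle = -|(dX)_\pm|^2$ thanks to the formula $\tr(AB)=-2\langle A,B\rangle$ on skew-symmetric endomorphisms \eqref{lam2inprod} and the mutual orthogonality of the components of $dX$ given by Lemma~\ref{dXprim}. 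The second, via the master formula, reduces to $\tfrac{n_\mp}{n}\sum_i|(X\wedge e_i)_\pm|^2 = \tfrac{n_\mp}{n}\sum_a|\omega_a^\pm X|^2$, which by definition of the Casimir operator and Corollary~\ref{casvalues} equals $\tfrac{n_\mp}{n}\cdot\tfrac{n_\pm^2-1}{n_+n_-}|X|^2 = \tfrac{n_\pm^2-1}{n_\pm n}|X|^2$. Assembling the signs and the overall factor $\tfrac{2n_\mp}{n}$ delivers the claimed identity.

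The main obstacle is (iv), where one must keep careful track of signs in the two separate trace computations and correctly identify the Casimir expression; the first three parts are then direct consequences of the master formula together with the already-proved auxiliary identities.
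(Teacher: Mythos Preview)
Your proposal is correct and follows essentially the same approach as the paper: both rest on the identity $(\nabla_Z dX)_\pm=\frac{n_\mp}{n}(X\wedge Z)_\pm$ (which the paper writes as $\nabla dX=2R(\cdot,X)$ combined with the curvature eigenvalue on $\bbE_\pm$), then feed it into Lemma~\ref{XinOp} for (i)--(ii), differentiate the norm for (iii), and apply $d^\ast$ for (iv). The only cosmetic difference is that you isolate the master formula upfront and evaluate the second term in (iv) via the Casimir identity of Corollary~\ref{casvalues}, whereas the paper invokes Lemma~\ref{XinOp}\,(ii) for the same purpose---but since the proof of that lemma \emph{is} the Casimir computation, the two arguments coincide.
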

\begin{proof}
\begin{enumerate}[(i)]
\item We recall that since $X$ is Killing, it satisfies $\nabla dX=2R(\cdot,X)$. Using that the bundles $\bbE_\pm$ are parallel and that the curvature operator $\curvop$ is symmetric and has eigenvalues $-\frac{n_\mp}{2n}$ on $\bbE_\pm$ by Lemma~\ref{curveigen}, we calculate
\begin{align*}
 d(dX)_\pm&=\sum_ie_i\wedge\nabla_{e_i}(dX)_\pm=\sum_ie_i\wedge(\nabla_{e_i}dX)_\pm\\
 &=2\sum_ie_i\wedge R(e_i,X)_\pm=-\frac{n_\mp}{n}\sum_ie_i\wedge(e_i\wedge X)_\pm.
\end{align*}
Applying Lemma~\ref{XinOp} (i) now yields the first assertion.
\item Similar to the above, we find
\begin{align*}
 d^\ast(dX)_\pm&=-\sum_ie_i\intprod\nabla_{e_i}(dX)_\pm=\frac{n_\mp}{n}\sum_ie_i\intprod(e_i\wedge X)_\pm.
\end{align*}
The assertion follows now from Lemma~\ref{XinOp} (ii).
\item Using the same identities as above, we calculate
\begin{align*}
  d|(dX)_\pm|^2&=-4\sum_i\langle R(X,e_i)_\pm,(dX)_\pm\rangle e_i=\frac{2n_\mp}{n}X\intprod(dX)_\pm.
 \end{align*}
\item By taking the codifferential in (iii) and applying Lemma~\ref{XinOp} (ii), we obtain
 \begin{align*}
  \Delta|(dX)_\pm|^2&=\frac{2n_\mp}{n}\sum_i\left((dX)_\pm(e_i,\nabla_{e_i}X)+2\langle R(X,e_i)_\pm,X\wedge e_i\rangle\right)\\
  &=\frac{2n_\mp}{n}|(dX)_\pm|^2-\frac{2(n_\pm^2-1)n_\mp}{n_\pm n^2}|X|^2.
 \end{align*}
\end{enumerate}
\end{proof}

\section{The infinitesimal Einstein deformations}
\label{sec:ied}


It has been known for some time that the space $\varepsilon(g)$ of infinitesimal Einstein de\-for\-ma\-tions of the complex Grassmannian is, as a $G$-submodule of the space of tt-tensors, isomorphic to $\g$ and thus to the space $\isom(M,g)$ of Killing vector fields (\cite{Koiso80}, see also \cite[Prop.~8.4]{GG04}). The infinitesimal Einstein deformations have been explicitly described in the case $n_+=n_-$ by Gasqui--Goldschmidt \cite[\S VIII.4]{GG04}, while a parametrization by Killing vector fields has been given in the case $n_-=2$ by Nagy--Semmelmann \cite[Prop.~6.2]{NS23}. We extend the latter approach to all $n_+,n_-\geq2$.

We shall parametrize the infinitesimal Einstein deformations of $(M,g)$ in terms of Killing vector fields using the following map.

\begin{defn}
\label{parameid}
Let $e: \X(M)\to\Omega^{1,1}_\prim(M)$ be the linear map defined by
\[X\longmapsto e_X:=\frac{n_-^2-1}{n_-}(dX)_+-\frac{n_+^2-1}{n_+}(dX)_-.\]
\end{defn}

\begin{satz}
\label{deX}
If $X$ is any Killing vector field, then
\begin{enumerate}[\upshape(i)]
 \item $de_X=X\intprod\Omega$,
 \item $d^\ast e_X=0$,
 \item $\Delta e_X=e_X$.
\end{enumerate}
Moreover, the map $e$ is injective restricted to Killing vector fields.
\end{satz}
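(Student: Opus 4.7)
The plan is to verify the three identities in order and then handle injectivity. Parts (i) and (ii) are immediate substitutions, while (iii) requires a slightly finer observation about how $\Delta$ interacts with the parallel splitting of $\Lambda^{1,1}TM$.

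For \textbf{(i) and (ii)}, I would simply plug the formulas of Lemma~\ref{ddX} into Definition~\ref{parameid}. In (i), the $n_\pm$ factors in the denominators of the definition of $e_X$ cancel with the $n_\mp$ factors from Lemma~\ref{ddX}(i) in just the right way, leaving the coefficients $\frac{n_-^2-1}{2n}$ and $-\frac{n_+^2-1}{2n}$ of $X\intprod\Omega_+$ and $X\intprod\Omega_-$ respectively, which by Definition~\ref{defO} assemble into $X\intprod\Omega$. In (ii), substituting Lemma~\ref{ddX}(ii) produces two terms with identical coefficients $\frac{(n_+^2-1)(n_-^2-1)}{n_+n_-n}$ and opposite signs, which cancel.

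For \textbf{(iii)}, the strategy is to reduce the identity to the Killing-field eigenvalue equation $\Delta X = 2\Ric(X) = X$, valid on any Einstein manifold with $E = \frac{1}{2}$ (Proposition~\ref{einsteinconst}). Since $d$ commutes with $\Delta$, this gives $\Delta(dX) = dX$. I would then argue that $\Delta$ preserves each of the parallel subbundles $\bbE_\pm$, from which $\Delta(dX)_\pm = (\Delta dX)_\pm = (dX)_\pm$ and hence $\Delta e_X = e_X$ follows by linearity. Preservation holds for $\nabla^\ast\nabla$ trivially since $\bbE_\pm$ are parallel, while the Weitzenböck curvature term $\Delta - \nabla^\ast\nabla$ on $2$-forms is built from $\Ric_\ast$ (which equals $\Id$ by $\Ric = \frac{1}{2}\Id$) and from $\curvop$, which by Lemma~\ref{curveigen} acts on $\bbE_\pm$ as a scalar. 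Thus the curvature term is itself a fibrewise scalar on each $\bbE_\pm$ and preserves the splitting.

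For \textbf{injectivity}, if $e_X = 0$ then, since the two summands of $e_X$ lie in the orthogonal parallel subbundles $\bbE_\pm$, we must have $(dX)_+ = (dX)_- = 0$ separately. Lemma~\ref{ddX}(i) then yields $X\intprod\Omega_\pm = 0$, and the nondegeneracy of $\Omega_\pm$ established in Corollary~\ref{Onondeg} forces $X = 0$. The only step requiring a little care in the entire argument is the $\bbE_\pm$-preservation by $\Delta$ in (iii); all other steps reduce to bookkeeping with Lemma~\ref{ddX}, the coefficients of Definition~\ref{parameid}, and nondegeneracy.
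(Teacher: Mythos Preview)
Your argument is correct. Parts (i) and (ii) match the paper exactly, and your injectivity argument is a minor variant (you use nondegeneracy of $\Omega_\pm$ via Lemma~\ref{ddX}(i), the paper uses nondegeneracy of $\Omega$ via part (i) directly).

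For (iii) you take a genuinely different route. The paper uses (i) and (ii) to write $\Delta e_X=d^\ast de_X=d^\ast(X\intprod\Omega)=dX\intprod\Omega$ (the last step since $\Omega$ is parallel), and then evaluates $dX\intprod\Omega$ explicitly via Lemmas~\ref{Cendo}, \ref{inOp}, \ref{Oprim} to recover $e_X$. Your approach instead observes that $\Delta$ commutes with the parallel projections $\pr_\pm:\Lambda^2T^\ast M\to\bbE_\pm$: the rough Laplacian $\nabla^\ast\nabla$ commutes with any parallel bundle map, and the Weitzenb\"ock remainder $\curvendo=\Ric_\ast+2\curvop$ acts as a scalar on each $\bbE_\pm$ by Lemma~\ref{curveigen} (and is self-adjoint, so it also preserves the complements). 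Hence $\Delta(dX)_\pm=(\Delta dX)_\pm=(dX)_\pm$, using only $\Delta X=X$. This is more conceptual and avoids the contraction identities entirely; the paper's computation, on the other hand, has the side benefit of exhibiting the formula $dX\intprod\Omega=e_X$, though this is not used elsewhere.
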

\begin{proof}
(i) and (ii) follow immediately from Lemma~\ref{ddX} and the definition of $\Omega$ (Defi\-ni\-tion~\ref{defO}). Moreover, by (i) and (ii), we find that $\Delta e_X=d^\ast(X\intprod\Omega)$. Next, we observe
\[d^\ast(X\intprod\Omega)=dX\intprod\Omega-X\intprod d^\ast\Omega,\]
and $d^\ast\Omega=0$ since $\Omega$ is parallel (Remark~\ref{Oparallel}). Now combining Lemma~\ref{Cendo}, Lemma~\ref{inOp} and the fact that $\Omega$ is primitive (Lemma~\ref{Oprim}), it is straightforward to calculate that
\begin{align*}
 \Delta e_X&=dX\intprod\Omega=(dX)_+\intprod\Omega+(dX)_-\intprod\Omega\\
 &=\frac{n_-^2-1}{n_-}(dX)_+-\frac{n_+^2-1}{n_+}(dX)_-=e_X.
\end{align*}
Finally, let $X$ be a Killing field such that $e_X=0$. Then by (i), $X\intprod\Omega=0$, and the nondegeneracy of $\Omega$  (Corollary~\ref{Onondeg}) implies $X=0$.
\end{proof}

The infinitesimal Einstein deformations of a Kähler--Einstein metric were described by Koiso \cite[Prop.~7.3]{Koiso83}, cf.~\cite[\S5]{NS23}. Generally $\varepsilon(g)$ splits into two subspaces $\varepsilon^\pm(g)$ of symmetric $2$-tensors commuting resp.~anticommuting with $J$, and we have
\[\varepsilon^+(g)=\{\alpha J\,|\,\alpha\in\Omega^{1,1}_0(M),\quad d^\ast\alpha=0,\quad \Delta\alpha=2E\alpha\}.\]
For the complex Grassmannians we have thus constructed an injective map from the space of Killing fields to $\varepsilon^+(g)J$. By the fact that $\isom(M,g)$ has the same dimension as $\varepsilon(g)$, we arrive at the following statement.

\begin{kor}
\label{killingparam}
The map $X\mapsto e_XJ$ is a linear isomorphism $\isom(M,g)\stackrel{\sim}{\to}\varepsilon^+(g)=\varepsilon(g)$.
\end{kor}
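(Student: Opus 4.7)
The plan is to combine Theorem~\ref{deX} with Koiso's description of $\varepsilon^+(g)$, and then conclude via a dimension count using the known identification $\varepsilon(g)\cong\g$ as a $G$-module.

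First, I would verify that the image indeed lies in $\varepsilon^+(g)$. By construction, $e_X$ takes values in $\bbE_+\oplus\bbE_-\subset\Lambda^{1,1}_0T^\ast M$, so $e_X$ is a primitive $(1,1)$-form. Theorem~\ref{deX}(ii) gives $d^\ast e_X=0$, and Theorem~\ref{deX}(iii) gives $\Delta e_X=e_X=2E\cdot e_X$ since $E=\tfrac12$ by Proposition~\ref{einsteinconst}. Together with Koiso's characterization
\[\varepsilon^+(g)=\{\alpha J\,|\,\alpha\in\Omega^{1,1}_0(M),\ d^\ast\alpha=0,\ \Delta\alpha=2E\alpha\}\]
recalled just before the statement, this shows $e_XJ\in\varepsilon^+(g)$.

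Next, the injectivity of the map $X\mapsto e_X$ on $\isom(M,g)$ is precisely the last assertion of Theorem~\ref{deX}, so $X\mapsto e_XJ$ is an injective linear map $\isom(M,g)\hookrightarrow\varepsilon^+(g)\subset\varepsilon(g)$.

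Finally, to upgrade injectivity to bijectivity onto all of $\varepsilon(g)$, I invoke the known fact (recalled in \S\ref{sec:ied}, due to Koiso \cite{Koiso80} and also implicit in \cite{GG04}) that $\varepsilon(g)\cong\g$ as a $G$-module. Since $\isom(M,g)\cong\g$ via fundamental vector fields, both $\isom(M,g)$ and $\varepsilon(g)$ have dimension $n^2-1$. An injective linear map between finite-dimensional spaces of equal dimension is surjective, so the image exhausts $\varepsilon(g)$. Because the image is contained in $\varepsilon^+(g)$, this simultaneously forces $\varepsilon^+(g)=\varepsilon(g)$ and in particular $\varepsilon^-(g)=0$, completing the corollary.

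There is essentially no obstacle here: the work has all been done in Theorem~\ref{deX} and in the preparatory sections. The one point requiring care is that we rely on the external input $\dim\varepsilon(g)=n^2-1$; it would be slightly more satisfying to establish $\varepsilon^-(g)=0$ intrinsically (e.g.\ by a Casimir/eigenvalue argument on the relevant Lichnerowicz eigenspace), but since the result is cited as standard, a brief remark pointing to \cite{Koiso80,GG04} suffices.
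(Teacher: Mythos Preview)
Your proof is correct and follows essentially the same approach as the paper: verify via Theorem~\ref{deX} and Koiso's characterization that $e_XJ\in\varepsilon^+(g)$, use the injectivity statement from Theorem~\ref{deX}, and conclude by the dimension count $\dim\isom(M,g)=\dim\g=\dim\varepsilon(g)$ quoted from \cite{Koiso80}. The paper's own justification is just the one-sentence remark preceding the corollary, so your write-up is if anything more detailed.
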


\begin{bem}
One may also arrive at the desired isomorphism by a purely algebraic argument. In light of the Peter--Weyl Theorem for homogeneous vector bundles \cite[Thm.~5.3.6]{Wa}, which states that given such a bundle $VM=G\times_KV$ over a homogeneous space $M=G/K$ of a compact Lie group $G$,
\[L^2(M,VM)\cong\closedsum_{\gamma\in\hat G}V_\gamma\otimes\Hom_K(V_\gamma,V)\]
as a $G$-module (where the sum ranges over all equivalence classes of irreducible repre\-sen\-ta\-tions of $G$), everything equivariantly derived from Killing vector fields plays out (in our case) on the Fourier mode $V_\gamma=\g$, the adjoint representation.

The infinitesimal Einstein deformations fit into an exact sequence
\[0\longrightarrow\isom(M,g)\longrightarrow\ker(\Delta-2E)\big|_{\Omega^1(M)}\longrightarrow\ker(\LL-2E)\big|_{\Sy^2_0(M)}\longrightarrow\varepsilon(g)\longrightarrow0,\]
cf.~\cite[\S4]{S22}. Since on an irreducible compact symmetric space $\g\cong\isom(M,g)$ and $\g$ is the only Fourier mode with Laplace eigenvalue $2E$ \cite[Lem.~5.2]{Koiso82}, this sequence descends to the level of Fourier coefficients as
\[0\longrightarrow\Hom_K(\g,\m)\longrightarrow\Hom_K(\g,\m)\longrightarrow\Hom_K(\g,\Sym^2_0\m)\longrightarrow T\longrightarrow0.\]
For the complex Grassmannians, we have $\Hom_K(\g,\m)=\spann\{\pr_\m\}$, $\Hom_K(\g,\Sym^2_0\m)$ is via $J$ isomorphic to $\Hom_K(\g,\Lambda^{1,1}_\prim\m)$ which in turn is spanned by both $\ad_\k\circ\pr_{\su(n_\pm)}$, and $T$ is a suitable one-dimensional subspace of $\Hom_K(\g,\Sym^2_0\m)$ such that the cor\-res\-pon\-ding symmetric $2$-tensors are divergence-free. That is, $T$ is the space of Fourier coefficients corresponding to the $G$-submodule $\varepsilon(g)$.

A straightforward calculation, also using Casimir operators, yields the appropriate linear combination of $\ad_\k\circ\pr_{\su(n_\pm)}$ (or $(dX)_\pm$ in Defi\-ni\-tion~\ref{parameid}, respectively) that spans the one-dimensional subspace $JT$ inside $\Hom_K(\g,\Lambda^{1,1}_\prim\m)$.
\end{bem}

\section{Invariant cubic forms}
\label{sec:invforms}

As explained in the introduction, the second order integrability obstruction manifests as an invariant cubic form on $\varepsilon(g)$. Ultimately our goal is to reduce it to an invariant polynomial on the Lie algebra $\g=\su(n)$ and in particular to show that it does not vanish identically. Throughout the calculation in \S\ref{sec:obstruction} we will encounter various such polynomials, however first we shall record a few of their properties and relations between them.

\subsection{Polynomials on $\su(n)$}


The following fact is well-known, cf.~\cite[Prop.~2.1]{GG04}, and has already been exploited in order to study the second order obstruction on homogeneous spaces of $\SU(n)$ \cite{BHMW,NS23,coindex}.

\begin{prop}
\label{invcubic}
The space $(\Sym^3\g^\ast)^G$ of invariant cubic forms on $\g=\su(n)$ is one-dimensional and spanned by the element $P_0$, defined by
\[P_0(X):=\i\tr(X^3),\qquad X\in\g.\]
\end{prop}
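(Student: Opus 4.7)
The plan is first to verify that $P_0(X) = \i\tr(X^3)$ is a well-defined real-valued $G$-invariant cubic form on $\g=\su(n)$. Reality follows from $(X^3)^\ast = -X^3$ for $X\in\su(n)$, so $\tr(X^3) \in \i\R$ and $P_0(X)\in\R$. Invariance under $\Ad(g)$ is immediate from the cyclicity of trace. This shows $\dim(\Sym^3\g^\ast)^G \geq 1$, at least for $n \geq 3$ where $P_0$ is nonzero; note that in our setting $n = n_+ + n_- \geq 4$ automatically.

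For the upper bound, I would invoke the Chevalley restriction theorem, which identifies $(\Sym^\bullet\g^\ast)^G$ with the ring $(\Sym^\bullet\t^\ast)^W$ of Weyl-invariant polynomials on a Cartan subalgebra. Taking $\t = \{\i\diag(\lambda_1,\ldots,\lambda_n) : \sum_j \lambda_j = 0\}$ with Weyl group $W = S_n$ acting by permutation of the $\lambda_j$'s, the degree-3 part of the invariant ring consists of symmetric cubic polynomials in the $\lambda_j$'s restricted to the hyperplane $\sum_j \lambda_j = 0$. The space of symmetric cubic polynomials in $n\geq 3$ variables is three-dimensional, spanned by the power-sum products $p_1^3$, $p_1 p_2$, and $p_3 = \sum_j \lambda_j^3$; the first two vanish on the hyperplane. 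Thus the degree-3 invariants are at most one-dimensional, and precisely one-dimensional since $p_3$ does not vanish identically on the hyperplane for $n\geq 3$ (e.g., at $\lambda = (1,1,-2,0,\ldots,0)$).

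Finally, a direct computation gives $P_0(\i\diag(\lambda_1,\ldots,\lambda_n)) = \sum_j \lambda_j^3$, so $P_0$ restricts to a generator of the one-dimensional space of degree-3 Weyl invariants and therefore spans $(\Sym^3\g^\ast)^G$ by the Chevalley isomorphism. The only conceptual input is the Chevalley restriction theorem; alternatively, one could cite the classical fact that for $\su(n)$ the invariant ring is freely generated in degrees $2,3,\ldots,n$, which makes one-dimensionality in degree 3 immediate. No step presents a real obstacle.
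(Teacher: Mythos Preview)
Your argument is correct. The verification that $P_0$ is real-valued and $\Ad$-invariant is fine, and the use of the Chevalley restriction theorem to reduce the question to degree-$3$ Weyl invariants on the Cartan subalgebra is the standard route; your computation that the degree-$3$ part of $\R[\lambda_1,\ldots,\lambda_n]^{S_n}$ restricted to the hyperplane $\sum_j\lambda_j=0$ is spanned by $p_3$ is accurate, as is the identification of $P_0\big|_\t$ with $p_3$.

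As for comparison: the paper does not actually prove this proposition. It is stated as a well-known fact with a reference to \cite[Prop.~2.1]{GG04} (and the alternative you mention, that the invariant ring of $\su(n)$ is freely generated in degrees $2,3,\ldots,n$, is equally classical). So your write-up supplies a self-contained argument where the paper simply cites the literature; there is no competing approach in the paper to contrast with.
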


\begin{bem}
Recall that for any homogeneous polynomial, the associated symmetric multilinear form may be recovered via polarization. Using the same symbol for the multilinear form, we may thus write
\[P_0(X,Y,Z)=\frac{\i}{2}\tr(XYZ+ZYX),\qquad X,Y,Z\in\g.\]
Moreover, $P_0\in\Sym^3\g^\ast$ is contained in $\Sym^2\g^\ast\otimes\g^\ast$ as the $\g^\ast$-valued quadratic form $X\mapsto P_0(X,X,\cdot)$.
\end{bem}

\begin{defn}
Let $\quadric\subset\g$ denote the variety defined as the zero set of the quadratic map $X\mapsto P_0(X,X,\cdot)$, i.e.
\[\quadric:=\{X\in\g\,|\,P_0(X,X,Y)=0\ \forall Y\in\g\}.\]
\end{defn}

This variety may be described explicitly, cf.~\cite[Lem.~3.3]{BHMW}, \cite[Lem.6.7]{NS23}.

\begin{prop}
\label{variety}
\begin{enumerate}[\upshape(i)]
 \item $\quadric=\{X\in\su(n)\,|\,X^2=\frac{\tr(X^2)}{n}I_n\}$.
 \item If $n$ is odd, then $\quadric=\{0\}$.
 \item If $n=2k$ is even, then $\quadric$ is the union of $\SU(n)$-orbits of all block matrices of the form
 \[\begin{pmatrix}
    \i t I_{k}&0\\
    0&-\i t I_{k}
   \end{pmatrix},\qquad t\in\R,
\]
and thus a cone over the complex Grassmannian $\SU(2k)/\rmS(\U(k)\times\U(k))$.
\end{enumerate}
\end{prop}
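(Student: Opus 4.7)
The plan is to reduce the condition defining $\quadric$ to a concrete matrix equation, and then analyze the possible eigenvalue patterns. First I would compute the polarization of $P_0$: writing $P_0(X,Y,Z)=\frac{\i}{2}\tr(XYZ+ZYX)$ and setting $Y=X$, one obtains
\[P_0(X,X,Z)=\i\tr(X^2Z)\qquad\forall Z\in\g.\]
So $X\in\quadric$ if and only if $\tr(X^2Z)=0$ for every $Z\in\su(n)$.

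For part (i), I would exploit the trace pairing on $\u(n)$. Since $X\in\su(n)$ is skew-Hermitian, $X^2$ is Hermitian, hence $\i X^2\in\u(n)$. The pairing $(A,B)\mapsto -\tr(AB)$ is the standard inner product on $\u(n)$, and the orthogonal complement of $\su(n)$ inside $\u(n)$ is the line of scalar skew-Hermitian matrices $\i\R\cdot I_n$. The vanishing condition $\tr(X^2 Z)=0$ for all $Z\in\su(n)$ therefore forces $\i X^2\in\i\R\cdot I_n$, i.e.\ $X^2\in\R\cdot I_n$. Taking traces identifies the scalar as $\tr(X^2)/n$, yielding (i).

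For parts (ii) and (iii), write $X^2=cI_n$ with $c\in\R$. Since $X$ is skew-Hermitian it is unitarily diagonalizable with purely imaginary eigenvalues $\i\mu_j$, $\mu_j\in\R$. The relation $X^2=cI_n$ forces $-\mu_j^2=c$ for every $j$, so $c\leq0$. If $c=0$ then all $\mu_j=0$ and $X=0$ by diagonalizability. If $c=-t^2$ with $t>0$, then $X$ has only the two eigenvalues $\pm\i t$, with multiplicities $p$ and $q=n-p$; the constraint $\tr(X)=0$ gives $pt=qt$, hence $p=q$. When $n$ is odd this is impossible, so $\quadric=\{0\}$, proving (ii). When $n=2k$, we must have $p=q=k$, and any such $X$ is $\SU(n)$-conjugate to the block-diagonal matrix $\diag(\i t I_k,-\i t I_k)$ — a diagonalizing unitary can be adjusted by a scalar phase (permissible because both eigenspaces have equal dimension $k$) to lie in $\SU(n)$. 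The stabilizer of this model element is exactly $\rmS(\U(k)\times\U(k))$, so the set of such $X$ for fixed $t>0$ is the Grassmannian $\SU(2k)/\rmS(\U(k)\times\U(k))$, and letting $t$ range over $\R$ produces the cone structure asserted in (iii).

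The main issue to be careful about is the last cone description — specifically, verifying that a diagonalizing $\U(n)$ matrix can always be modified to lie in $\SU(n)$ (which uses the equality of eigenspace dimensions), and identifying the stabilizer correctly so that the $\SU(n)$-orbit is exactly the claimed Grassmannian. Everything else is a routine consequence of polarization and the spectral theorem for skew-Hermitian matrices.
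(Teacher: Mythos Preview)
Your proof is correct and follows the natural route via polarization and the spectral theorem; the paper itself does not give a proof of this proposition but simply cites \cite[Lem.~3.3]{BHMW} and \cite[Lem.~6.7]{NS23}, where essentially the same argument appears. One small remark: the parenthetical ``permissible because both eigenspaces have equal dimension $k$'' is unnecessary---for any $U\in\U(n)$ the scalar multiple $e^{\i\theta}U$ has the same conjugation action and $\det(e^{\i\theta}U)=e^{\i n\theta}\det U$, so one can always adjust into $\SU(n)$ regardless of the eigenspace structure.
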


\subsection{Polynomials on Killing vector fields}
\label{sec:killingpol}


We define the following $G$-invariant cubic forms on $\isom(M,g)\cong\g$.

\begin{defn}
For any Killing vector field $X$, let
\begin{enumerate}[(i)]
 \item $\displaystyle\mu(X):=\int_Mz_X^3\vol$,
 \item $\displaystyle\nu_\pm(X):=\int_M|(dX)_\pm|^2 z_X\vol$.
\end{enumerate}
\end{defn}

\begin{bem}
The $G$-invariance of $\mu,\nu_\pm$ is clear from the fact that $G$ acts by translation and the mappings $X\mapsto z_X$, $X\mapsto(dX)_\pm$ are all $G$-equivariant. Thus integrating over $M$ with respect to the invariant volume form $\vol$ averages out the action of $G$.

It was shown in \cite{HMW} that the polynomial $\mu$ does not vanish identically on $\isom(M,g)$ provided $n_+\neq n_-$. However for $n_+=n_-$ it does vanish, as we will shortly see.
\end{bem}

\begin{bem}
\label{polar}
 The (partially) polarized forms of $\mu$ and $\nu_\pm$ are given by
 \[\mu(X,X,Y)=\int_Mz_X^2z_Y\vol,\qquad\nu_\pm(X,X,Y)=\int_M|(dX)_\pm|^2z_Y\vol.\]
 The latter is not obvious, but follows along the lines of \cite[Lem.~6.8]{NS23}.
\end{bem}

Since $\mu,\nu_\pm$ are both $G$-invariant, they are (after application of the isomorphism $\g\cong\isom(M,g)$) multiples of the polynomial $P_0$ described above. Unfortunately, we cannot leave it at this observation. In order to obtain an expression for the obstruction polynomial $\Psi$ as a linear combination of $\mu$ and $\nu_\pm$ and to show that $\Psi$ corresponds to a nonzero multiple of $P_0$ defined above, it is necessary to relate the polynomials $\mu,\nu_\pm$ to other cubic integrals. In particular we need the following nonvanishing result, which shall be proved in \S\ref{sec:nu}.

\begin{lem}
\label{nu}
The polynomials $\nu_\pm$ do not vanish identically.
\end{lem}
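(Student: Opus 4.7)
The plan is to show that $\nu_\pm$—which by Proposition~\ref{invcubic} is automatically a scalar multiple of the unique invariant cubic $P_0$—has nonzero coefficient. Multiplying the pointwise identity in Lemma~\ref{ddX}(iv) by $z_X$ and integrating over $M$, using $\Delta z_X = z_X$ and the elementary relation $\int_M z_X|X|^2\vol = \mu(X)/2$ (derived by rewriting $\mu(X) = \int_M z_X^2\Delta z_X\vol$ and integrating by parts together with $|X|^2 = |\grad z_X|^2$), one arrives at the key identity
\[\frac{n_\pm - n_\mp}{n}\,\nu_\pm(X) = -\frac{(n_\pm^2-1)n_\mp}{n_\pm n^2}\,\mu(X).\]

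If $n_+ \ne n_-$, dividing the above by the nonzero factor $(n_\pm - n_\mp)/n$ exhibits $\nu_\pm$ as an explicit nonzero rational multiple of $\mu$, which by \cite{HMW} does not vanish identically. This disposes of the case $n_+ \ne n_-$.

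The remaining case $n_+ = n_- =: k$ is the main obstacle, since $\mu \equiv 0$ and the coefficient $(n_\pm - n_\mp)/n$ also vanishes. The trick is that for $n_+ = n_-$ the $|X|^2$-contributions in Lemma~\ref{ddX}(iv) cancel in the difference, so $f_X := |(dX)_+|^2 - |(dX)_-|^2$ satisfies $\Delta f_X = f_X$. By Matsushima's theorem, $\ker(\Delta - \Id) \cap C^\infty(M)$ consists precisely of the Killing potentials $\{z_Y : Y \in \g\}$ $G$-equivariantly, so $f_X = z_{Y(X)}$ for some $G$-equivariant quadratic map $Y : \g \to \g$. Since $\Hom_G(\Sym^2 \su(n), \su(n))$ is one-dimensional for $n \ge 3$, spanned by the $d$-symbol $X \mapsto \i(X^2 - \tr(X^2)\,I/n)$, one necessarily has $Y(X) = \lambda \cdot \i(X^2)_0$ for some scalar $\lambda$. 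The final step is to verify $\lambda \ne 0$, which I would do by comparing $f_X(o)$ and $z_{Y(X)}(o)$ on a concrete diagonal $X$, using the explicit formula for $d\tilde X$ at the origin (cf.~the remark in \S\ref{sec:diff}) together with a moment-map computation of $z_W(o)$. Once $\lambda \ne 0$ is in hand, Matsushima's $L^2$-isomorphism gives $\nu_+ - \nu_- = \int_M z_{Y(X)} z_X \vol$ as a nonzero scalar multiple of $P_0(X)$; combined with $\nu_+ + \nu_- = -\mu/(n_+ n_-) \equiv 0$—itself a consequence of $|(dX)_+|^2 + |(dX)_-|^2 = |dX|^2 - z_X^2/(n_+ n_-)$ together with $\int_M z_X |dX|^2 \vol = 0$—one concludes $\nu_+ = -\nu_- \ne 0$.
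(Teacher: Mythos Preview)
Your proposal is correct and follows essentially the same strategy as the paper: reduce the case $n_+\neq n_-$ to the nonvanishing of $\mu$ via the relation coming from Lemma~\ref{ddX}(iv), and for $n_+=n_-$ observe that $f_X=|(dX)_+|^2-|(dX)_-|^2$ is a Killing potential and then show it does not vanish for all $X$.

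The one noteworthy difference is your detour through $\Hom_G(\Sym^2\su(n),\su(n))$ to identify $Y(X)=\lambda\,\i(X^2)_0$ before checking $\lambda\neq 0$; the paper bypasses this entirely by noting that if $\nu_\pm(X,X,\cdot)\equiv 0$ then $f_X$ is a Killing potential orthogonal to all Killing potentials, hence $f_X=0$ identically, and simply exhibits a fundamental vector field generated by a nonzero element of $\su(n_+)\subset\k$ for which $(dX)_+\neq 0$ but $(dX)_-=0$ at the identity coset. Your unperformed step ``verify $\lambda\neq 0$ on a concrete diagonal $X$'' amounts to exactly this check, so the representation-theoretic packaging is extra machinery that buys nothing here---though it does make explicit the identification of $\nu_+-\nu_-$ with a multiple of $P_0$, which the paper only obtains implicitly.
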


First, however, we shall discuss a few relations between the polynomials $\mu,\nu_\pm$ and some cubic integrals of Killing vector fields.

\subsection{Relations between cubic integrals}
\label{sec:cubicint}

To avoid redundancy, we note here that the proofs in this section repeatedly use the following facts: any Killing field $X$ satisfies $dX=2\nabla X$ and $\nabla dX=2R(\cdot,X)$, while its Killing potential satisfies $dz_X=JX$ as well as $\Delta z_X=z_X$. Moreoever, as a consequence of Lemma~\ref{curveigen} and the symmetry of $\curvop$, we have the identity $R(X,Y)_\pm=-\frac{n_\mp}{2n}(X\wedge Y)_\pm$ for all $X,Y\in T_pM$, and thus, since the bundles $\bbE_\pm$ are parallel,
\[\nabla_Y(dX)_\pm=\frac{n_\mp}{n}(X\wedge Y)_\pm\]
for all $Y\in TM$ and Killing vector fields $X$. We also remind the reader that $n=n_++n_-$.

\begin{lem}
\label{cubic1}
For any Killing vector fields $X$ and $Y$,
\begin{enumerate}[\upshape(i)]
 \item $\displaystyle\int_M|X|^2z_Y\vol=\frac12\int_Mz_X^2z_Y\vol$,
 \item $\displaystyle\int_M|dX|^2z_X\vol=0$.
\end{enumerate}
\end{lem}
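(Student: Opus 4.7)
For \textup{(i)}, the key is the Kähler identity $dz_X = JX$, which gives $|X|^2 = |dz_X|^2$ pointwise. Combining the Laplacian product rule $\Delta(uv) = u\,\Delta v + v\,\Delta u - 2\langle du, dv\rangle$ applied with $u = v = z_X$ and Matsushima's identity $\Delta z_X = z_X$ yields
\[|X|^2 = z_X^2 - \tfrac{1}{2}\Delta(z_X^2).\]
Multiplying by $z_Y$, integrating over $M$, and moving the Laplacian onto $z_Y$ by self-adjointness (using $\Delta z_Y = z_Y$) then produces exactly the claimed factor $\tfrac{1}{2}$.

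For \textup{(ii)}, the plan is to integrate by parts after writing $z_X\,dX = d(z_X X) - dz_X \wedge X$, identifying the vector field $X$ with its metric-dual $1$-form. Pairing with $dX$ splits the integral into two pieces. The first, $\int_M \langle z_X X, d^\ast dX\rangle\,\vol$, reduces to $\int_M z_X |X|^2\,\vol$ because Killing fields are divergence-free and Bochner's formula, together with $\Ric = \tfrac12\,\Id$, gives $\Delta X = 2\Ric X = X$, so $d^\ast dX = X$. For the second piece, the adjunction between wedge and interior product combined with the Killing identity $dX(Y,Z) = 2g(\nabla_Y X, Z)$ rewrite
\[\langle dz_X \wedge X, dX\rangle = dX(\grad z_X, X) = \nabla_{\grad z_X}|X|^2 = g(dz_X, d|X|^2),\]
and one more integration by parts using $\Delta z_X = z_X$ shows that this integrates to $\int_M z_X |X|^2\,\vol$ as well. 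The two contributions cancel.

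The delicate point is the sign bookkeeping in \textup{(ii)}: one must compute the wedge adjoint as $\langle dz_X \wedge X, dX\rangle = \langle X, \grad z_X \intprod dX\rangle$ rather than as $\langle dz_X, X \intprod dX\rangle$ (the two differ by a sign due to the anticommutativity of the wedge of two $1$-forms), and one must then use the Killing property to convert $dX(\grad z_X, X)$ into a derivative of $|X|^2$. Once this is correctly handled, both identities become routine applications of the Kähler--Einstein identities collected at the beginning of \S\ref{sec:diff}.
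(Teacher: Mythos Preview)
Your argument is correct in both parts. For (i), your Laplacian product-rule approach is essentially a cleaner repackaging of what the paper does: the paper computes the codifferentials of $z_X^2\,dz_Y$ and $z_Xz_Y\,dz_X$ and combines them, which amounts to the same bookkeeping as your identity $|X|^2 = z_X^2 - \tfrac12\Delta(z_X^2)$. For (ii), the routes genuinely diverge. The paper computes $\Delta|X|^2 = -|dX|^2 + |X|^2$ directly via a Bochner-type calculation and then pairs with $z_X$ using self-adjointness of $\Delta$. Your approach instead decomposes $z_X\,dX = d(z_XX) - dz_X\wedge X$, and handles each piece by integration by parts, invoking $d^\ast dX = X$ (from the Killing/Bochner identity $\Delta X = 2\Ric X$) for the first and the adjunction $\langle dz_X\wedge X, dX\rangle = dX(\grad z_X, X) = \langle dz_X, d|X|^2\rangle$ for the second. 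Both arguments are short and use only the standard K\"ahler--Einstein identities from \S\ref{sec:diff}; the paper's version has the mild advantage of producing the pointwise formula $\Delta|X|^2 = -|dX|^2 + |X|^2$ as a by-product, while your version avoids any local-frame computation. Your remark about the sign in the wedge adjunction is well taken and correctly handled.
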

\begin{proof}
\begin{enumerate}[\upshape(i)]
 \item Using that $dz_X=JX$ and $\Delta z_X=z_X$, we find
 \begin{align*}
  d^\ast(z_X^2dz_Y)&=-2\langle JX,JY\rangle z_X+z_X^2d^\ast dz_Y=-2\langle X,Y\rangle z_X+z_X^2z_Y,\\
  d^\ast(z_Xz_Ydz_X)&=-|JX|^2z_X-\langle JX,JY\rangle z_X+z_Xz_Yd^\ast dz_X\\
  &=-|X|^2z_X-\langle X,Y\rangle z_X+z_X^2z_Y.
 \end{align*}
 Combining the above and integrating yields the assertion.
 \item First, integrating by parts we obtain
 \[\int_M(\Delta|X|^2)z_X\vol=\int_M|X|^2\Delta z_X\vol=\int_M|X|^2z_X\vol\]
 Using a local orthonormal basis that is parallel at the point of evaluation, we calculate
 \begin{align*}
  \Delta|X|^2&=-2\sum_ie_i(\langle\nabla_{e_i}X,X\rangle)=\sum_ie_i(\langle X\wedge e_i,dX\rangle)\\
  &=-\sum_i\langle e_i\wedge\nabla_{e_i}X,dX\rangle-\sum_i\langle e_i\wedge X,\nabla_{e_i}dX\rangle\\
  &=-\langle dX,dX\rangle+2\Ric(X,X)=-|dX|+|X|^2.
 \end{align*}
 Thus we also obtain
 \[\int_M(\Delta|X|^2)z_X\vol=-\int_M|dX|^2z_X\vol+\int_M|X|^2z_X\vol\]
 and conclude that $\int_M|dX|^2z_X\vol=0$.
\end{enumerate}
\end{proof}

\begin{bem}
In fact, the statement of Lemma~\ref{cubic1} holds true on any compact Kähler--Einstein manifold with $E=\frac{1}{2}$.
\end{bem}

\begin{kor}
\label{munu}
The polynomials $\mu,\nu_\pm$ satisfy the following relations.
\begin{enumerate}[\upshape(i)]
 \item $\displaystyle\nu_++\nu_-+\frac{1}{n_+n_-}\mu=0$.
 \item $\displaystyle(n_\mp-n_\pm)\nu_\pm=\frac{n_\mp(n_\pm^2-1)}{n_\pm n}\mu$. In particular $\mu=0$ if $n_+=n_-$.
\end{enumerate}
\end{kor}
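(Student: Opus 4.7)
The plan is to deduce both identities as direct consequences of what has already been established: the orthogonal decomposition in Lemma~\ref{dXprim}, the Hodge identities for $|(dX)_\pm|^2$ in Lemma~\ref{ddX}, and the two cubic identities in Lemma~\ref{cubic1}. No new geometric input is required, so the argument is essentially bookkeeping.

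For part (i), the key observation is that the subbundles $\R\omega$, $\bbE_+$, $\bbE_-$ of $\Lambda^2 TM$ are mutually orthogonal. Lemma~\ref{dXprim} gives the decomposition
\[
dX = (dX)_+ + (dX)_- + \frac{z_X}{n_+ n_-}\,\omega,
\]
and since $|\omega|^2 = n_+ n_-$, Pythagoras yields
\[
|dX|^2 = |(dX)_+|^2 + |(dX)_-|^2 + \frac{z_X^{\,2}}{n_+ n_-}.
\]
Multiplying by $z_X$, integrating over $M$, and invoking Lemma~\ref{cubic1}(ii), which states $\int_M |dX|^2 z_X \vol = 0$, delivers $\nu_+ + \nu_- + \tfrac{1}{n_+ n_-}\mu = 0$.

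For part (ii), I multiply the formula of Lemma~\ref{ddX}(iv) by $z_X$ and integrate. The left-hand side becomes, via self-adjointness of $\Delta$ together with $\Delta z_X = z_X$,
\[
\int_M \Delta|(dX)_\pm|^2\, z_X\,\vol = \int_M |(dX)_\pm|^2\, \Delta z_X\,\vol = \nu_\pm(X).
\]
On the right-hand side, the term $\int_M |X|^2 z_X \vol$ equals $\tfrac{1}{2}\mu(X)$ by Lemma~\ref{cubic1}(i). Putting these together yields
\[
\nu_\pm = \frac{2n_\mp}{n}\,\nu_\pm - \frac{(n_\pm^2-1)n_\mp}{n_\pm n^2}\,\mu,
\]
so that $\bigl(1 - \tfrac{2n_\mp}{n}\bigr)\nu_\pm = -\tfrac{(n_\pm^2-1)n_\mp}{n_\pm n^2}\mu$. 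Since $1 - \tfrac{2n_\mp}{n} = \tfrac{n_\pm - n_\mp}{n}$, this simplifies to the claimed identity $(n_\mp - n_\pm)\nu_\pm = \tfrac{n_\mp(n_\pm^2-1)}{n_\pm n}\mu$. The final assertion follows immediately: when $n_+ = n_-$, the left-hand side vanishes identically while the coefficient of $\mu$ on the right is strictly positive, forcing $\mu \equiv 0$.

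There is no real obstacle, as the whole corollary is a direct consolidation of prior identities. The only points requiring minor care are the orthogonality verification in (i) (in particular tracking $|\omega|^2 = n_+ n_-$) and the correct application of self-adjointness of $\Delta$ in (ii).
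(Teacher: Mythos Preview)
Your proof is correct and follows essentially the same route as the paper: for (i) you decompose $|dX|^2$ via Lemma~\ref{dXprim} and kill the integral with Lemma~\ref{cubic1}(ii), and for (ii) you multiply Lemma~\ref{ddX}(iv) by $z_X$, integrate by parts using $\Delta z_X=z_X$, and convert the $|X|^2$-term via Lemma~\ref{cubic1}(i), exactly as the paper does.
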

\begin{proof}
\begin{enumerate}[(i)]
 \item This follows directly from combining Lemma~\ref{dXprim} with Lemma~\ref{cubic1} (i).
 \item We recall from Lemma~\ref{ddX} (iv) that
 \begin{align*}
  \Delta|(dX)_\pm|^2&=\frac{2n_\mp}{n}|(dX)_\pm|^2-\frac{2(n_\pm^2-1)n_\mp}{n_\pm n^2}|X|^2.
 \end{align*}
 Taking the product with $z_X$ and integrating, we find with Lemma~\ref{cubic1} (i) that
 \[\int_M(\Delta|(dX)_\pm|^2)z_X\vol=\frac{2n_\mp}{n}\nu_\pm(X)-\frac{(n_\pm^2-1)n_\mp}{n_\pm n^2}\mu(X).\]
 But on the other hand, integration by parts simply yields
 \[\int_M(\Delta|(dX)_\pm|^2)z_X\vol=\int_M|(dX)_\pm|^2\Delta z_X\vol=\nu_\pm(X)\]
 Together, these imply the assertion.
\end{enumerate}
\end{proof}

\begin{lem}
\label{cubic2}
For any Killing vector field $X$,
\begin{enumerate}[\upshape(i)]
 \item $\displaystyle\int_M(dX)_\pm(JX,X)\vol=-\nu_\pm(X)+\frac{n_\pm^2-1}{2n_\pm n}\mu(X)$,
 \item $\displaystyle\int_M\langle(dX)_+^2J,(dX)_\pm\rangle\vol=\frac{-n_\pm^2n_\mp+n_\pm+3n_\mp}{n_+n_-n}\nu_\pm(X)+\frac{(n_\pm^2-1)(n_\pm^2-2)}{2n_\pm^2n^2}\mu(X)$.
\end{enumerate}
\end{lem}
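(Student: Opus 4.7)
For part (i), the plan is a single integration by parts. Using the Kähler identity $\nabla z_X = JX$, I would rewrite the integrand pointwise as
\[
(dX)_\pm(JX,X) = -g\bigl((dX)_\pm X,JX\bigr) = -g\bigl((dX)_\pm X,\nabla z_X\bigr),
\]
so that the divergence theorem turns the integral into $\int_M\operatorname{div}\bigl((dX)_\pm X\bigr)\, z_X\,\vol$. Since the vector field $(dX)_\pm X$ is metrically dual to the $1$-form $X\intprod(dX)_\pm$, which by Lemma~\ref{ddX}(iii) equals $\tfrac{n}{2n_\mp}d|(dX)_\pm|^2$, one finds $\operatorname{div}\bigl((dX)_\pm X\bigr) = -\tfrac{n}{2n_\mp}\Delta|(dX)_\pm|^2$. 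Lemma~\ref{ddX}(iv) simplifies this to $-|(dX)_\pm|^2 + \tfrac{n_\pm^2-1}{n_\pm n}|X|^2$, and Lemma~\ref{cubic1}(i) replaces $\int_M|X|^2 z_X\vol$ by $\tfrac12\mu(X)$. Collecting the pieces gives the stated formula.

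For part (ii), the cubic integrand requires a more elaborate reduction. First I would use that $(dX)_\pm$ commutes with $J$ together with the identity $\langle\alpha,\omega\rangle = -\tfrac12\tr(\alpha J)$ for $2$-forms (identified with skew endomorphisms) to rewrite
\[
\langle (dX)_\pm^2 J,(dX)_\pm\rangle = -\tfrac12\tr\bigl((dX)_\pm^3 J\bigr) = \langle (dX)_\pm^3,\omega\rangle.
\]
Next, Lemma~\ref{wedgein} applied with $\alpha=\beta=\gamma=(dX)_\pm$ yields $(dX)_\pm\intprod\bigl((dX)_\pm\wedge(dX)_\pm\bigr) = 2|(dX)_\pm|^2(dX)_\pm + 2(dX)_\pm^3$. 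Using primitivity $\langle(dX)_\pm,\omega\rangle=0$ to kill the first summand after pairing with $\omega$, and the adjunction between interior and wedge product, one obtains
\[
\langle (dX)_\pm^2 J,(dX)_\pm\rangle \;=\; \tfrac12\bigl\langle (dX)_\pm\wedge(dX)_\pm,\,(dX)_\pm\wedge\omega\bigr\rangle,
\]
an $L^2$-pairing of $4$-forms. I would then process it by Stokes' theorem, using $d(dX)_\pm = \tfrac{n_\mp}{2n}X\intprod\Omega_\pm$ (Lemma~\ref{ddX}(i)) and $d\omega=0$ to transfer exterior derivatives. The resulting terms involve $X\intprod\Omega_\pm$ wedged with $2$-forms built from $(dX)_\pm$, $X^\flat$ and $\omega$; these are dismantled using Lemma~\ref{XinOp}, Lemma~\ref{inOp} and Lemma~\ref{Cendo}. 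Each surviving term should be identifiable as one of $\nu_\pm(X)$, $\int_M|X|^2z_X\vol = \tfrac12\mu(X)$ (Lemma~\ref{cubic1}(i)), or $\int_M (dX)_\pm(JX,X)\vol$ handled by part (i). Finally Corollary~\ref{munu} is used to rewrite the $\nu_\pm$-content so that the answer takes the precise form claimed.

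The principal obstacle is the bookkeeping in part (ii): one has to track the numerical coefficients arising from the Leibniz rule, from the expansion of $\Omega_\pm$ in an orthonormal frame of $\bbE_\pm$, and from the operator $C_\pm$ in Lemma~\ref{Cendo}. One must also verify that no contribution from the $\bbF$-summand of $\Lambda^{1,1}_0 TM$ survives, which follows because $(dX)_\pm$ lies in the parallel subbundle $\bbE_\pm$ and the relevant anticommutator/commutator relations from \S\ref{sec:commanticomm} keep all intermediate expressions inside $\R\omega\oplus\bbE_+\oplus\bbE_-$. A conceptually lighter alternative would be to note that both sides define $G$-invariant cubic forms on $\isom(M,g)\cong\g$, so by Proposition~\ref{invcubic} they must be proportional, and the proportionality could be pinned down by evaluation on a convenient element of $\g=\su(n)$; the differential-geometric derivation is however what keeps the structural origin of the coefficients transparent.
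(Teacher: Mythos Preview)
Your treatment of (i) is correct and essentially matches the paper: both proofs are a single integration by parts, the only cosmetic difference being that the paper computes $d^\ast\bigl(z_X\,X\intprod(dX)_\pm\bigr)$ directly, whereas you first strip off $z_X$ and then recognise $\operatorname{div}\bigl((dX)_\pm X\bigr)$ via Lemma~\ref{ddX}(iii)--(iv). The latter is arguably cleaner since those lemmas are already in place.

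For (ii) the paper takes a more direct route than your $4$-form/Stokes outline. It computes the codifferential of the vector field $(dX)_\pm^2 JX$: using $\nabla_Y(dX)_\pm=\tfrac{n_\mp}{n}(X\wedge Y)_\pm$ and $dX=2\nabla X$, the terms of $d^\ast\bigl((dX)_\pm^2 JX\bigr)$ reduce via Lemma~\ref{XinOp}(ii) and Lemma~\ref{Cendo} to multiples of $(dX)_\pm(JX,X)$, together with $\langle(dX)_\pm^2 J,(dX)_\pm\rangle$ and $\tfrac{z_X}{n_+n_-}|(dX)_\pm|^2$ (the last coming from Corollary~\ref{anticomm1} and Lemma~\ref{dXprim}). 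Integrating and invoking part (i) yields the stated coefficients immediately; Corollary~\ref{munu} is not needed at this stage. Your identity $\langle(dX)_\pm^2 J,(dX)_\pm\rangle=\tfrac12\langle(dX)_\pm\wedge(dX)_\pm,(dX)_\pm\wedge\omega\rangle$ is correct, but neither $4$-form is manifestly exact, so ``transferring exterior derivatives via Stokes'' in practice means computing $d^\ast$ of one of them; once you unpack that with $\nabla(dX)_\pm$ and Lemma~\ref{Cendo} you are doing the same endomorphism bookkeeping as the paper, only with an extra layer of form-algebra on top. So your plan is viable but not shorter, and the appeal to Corollary~\ref{munu} at the end is superfluous.
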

\begin{proof}
\begin{enumerate}[(i)]
 \item First, we calculate
 \begin{align*}
  \nabla_Y(z_XX\intprod(dX)_\pm)&=\langle JX,Y\rangle X\intprod(dX)_\pm+\frac{1}{2}z_XdX(Y)\intprod(dX)_\pm\\
  &\phantom{=}+\frac{n_\mp}{n}z_XX\intprod(X\wedge Y)_\pm.
 \end{align*}
 Together with Lemma~\ref{XinOp} (ii) it follows that
 \begin{align*}
  d^\ast(z_XX\intprod(dX)_\pm)&=(dX)_\pm(JX,X)+|(dX)_\pm|^2 z_X-\frac{n_\pm^2-1}{n_\pm n}|X|^2z_X
 \end{align*}
 and by integrating and combining with Lemma~\ref{cubic1} (i) we obtain the desired equality.
 \item We start by calculating the codifferential of the vector field $(dX)_\pm^2JX$. Similar to the above, we find that
 \begin{align*}
  \nabla_Y((dX)_\pm^2JX)&=\frac{n_\mp}{n}\{(X\wedge Y)_\pm,(dX)_\pm\}JX+\frac{1}{2}(dX)_\pm^2JdX(Y).
 \end{align*}
 Let us consider the terms occurring in $d^\ast(dX)_\pm^2JX$ one by one. First,
 \begin{align*}
  \sum_i\langle e_i,(X\wedge e_i)_\pm(dX)_\pm JX\rangle&=\sum_i\langle e_i\intprod(e_i\wedge X)_\pm,(dX)_\pm JX\rangle\\
  &=\frac{n_\pm^2-1}{n_+n_-}(dX)_\pm(JX,X)
 \end{align*}
 by Lemma~\ref{XinOp} (ii). Second,
 \begin{align*}
  \sum_i\langle e_i,(dX)_\pm(X\wedge e_i)_\pm JX\rangle&=\sum_{i,a}\langle e_i,X\intprod\omega_a^\pm\rangle (dX)_\pm(JX\intprod\omega_a^\pm,e_i)\\
  &=\sum_a(dX)_\pm(JX\intprod\omega_a^\pm,X\intprod\omega_a^\pm)\\
  &=-C_\pm((dX)_\pm)(JX,X)=-\frac{1}{n_+n_-}(dX)_\pm(JX,X)
 \end{align*}
 by virtue of Lemma~\ref{Cendo}. Third, we note that by Corollary~\ref{anticomm1} applied to $\alpha=hJ=(dX)_\pm$, the endomorphism $(dX)_\pm^2J$ is a section of $\bbE_\pm\oplus\R\omega$. Thus, using Lemma~\ref{dXprim}, we find that
 \begin{align*}
  \sum_i\langle e_i,(dX)_\pm^2JdX(e_i)\rangle&=-2\langle (dX)_\pm^2J,dX\rangle=-2\langle (dX)_\pm^2J,dX_\pm\rangle+\frac{2z_X}{n_+n_-}|(dX)_\pm|^2.
 \end{align*}
 Note that the factor $-2$ comes from the definition of the inner product on $2$-forms, cf.~\eqref{lam2inprod}. Putting everything together, we obtain
 \begin{align*}
  d^\ast((dX)^2_\pm JX)&=-\frac{n_\pm^2-2}{n_\pm n}(dX)_\pm(JX,X)+\langle (dX)_\pm^2J,(dX)_\pm\rangle-\frac{z_X}{n_+n_-}|(dX)_\pm|^2
 \end{align*}
 and after integrating and applying (i) the assertion follows.
\end{enumerate}
\end{proof}

\subsection{The nonvanishing of $\nu_\pm$}
\label{sec:nu}

The purpose of this section is to prove Lemma~\ref{nu}. We note that in the case $n_+\neq n_-$, the nonvanishing of $\nu_\pm$ follows from Corollary~\ref{munu} (ii) and the nonvanishing of $\mu$ \cite[\S5]{HMW}. Thus it remains to focus on the case $n_+=n_-$.

For any Killing vector field $X$ let the functions $f^\pm_X\in C^\infty(M)$ be defined by
\[f^\pm_X := |(dX)_\pm|^2.\]
As an immediate consequence of Lemma~\ref{ddX} (iv) we find

\begin{kor}
If $n_+ = n_-$, then the function $f_X := f^+_X  - f^-_X$  satisfies $\Delta f_X = f_X$, i.e. $f_X$ is a Killing potential.
\end{kor}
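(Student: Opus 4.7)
The plan is to apply Lemma~\ref{ddX}(iv) to each of the two pieces and exploit the symmetry forced by $n_+ = n_-$. Writing $k := n_+ = n_-$ so that $n = 2k$, Lemma~\ref{ddX}(iv) reads
\[
\Delta f^\pm_X \;=\; \frac{2n_\mp}{n}\,f^\pm_X - \frac{2(n_\pm^2-1)n_\mp}{n_\pm n^2}\,|X|^2 \;=\; f^\pm_X - \frac{k^2-1}{2k^2}\,|X|^2,
\]
where both coefficients become independent of the sign $\pm$ precisely because of the assumption $n_+ = n_-$. This is the whole point: the $|X|^2$ contributions from $f^+_X$ and $f^-_X$ are equal and cancel when we subtract.

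Forming the difference $f_X = f^+_X - f^-_X$ thus yields $\Delta f_X = f^+_X - f^-_X = f_X$. So $f_X \in \ker(\Delta - \Id)$ on functions. By Matsushima's theorem (recalled at the beginning of \S\ref{sec:diff}), on the compact Kähler--Einstein manifold $(M,g,J)$ with $E = \tfrac12$ the map $Y \mapsto z_Y$ is an isomorphism from $\isom(M,g)$ onto $\ker(\Delta - \Id)$ acting on functions. Hence $f_X$ is a Killing potential, as claimed. (The normalization $\int_M f_X \vol = 0$ is automatic, since $f_X$ is an eigenfunction of $\Delta$ with nonzero eigenvalue and is therefore $L^2$-orthogonal to the constants.)

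There is no real obstacle here; the statement is essentially an algebraic cancellation, and the only ingredient beyond arithmetic is the citation of Matsushima's theorem to translate from ``$\Delta f = f$'' to ``$f$ is a Killing potential''. The one thing to double-check is that the $|X|^2$-coefficient really is symmetric under $\pm$, which it is: $\tfrac{2(n_\pm^2-1)n_\mp}{n_\pm n^2}$ with $n_+=n_-=k$ simplifies identically in both cases to $\tfrac{k^2-1}{2k^2}$, and likewise $\tfrac{2n_\mp}{n} = 1$ in both cases.
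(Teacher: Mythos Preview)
Your proof is correct and follows exactly the approach the paper indicates: it is an immediate consequence of Lemma~\ref{ddX}(iv), with the $|X|^2$-terms cancelling under the hypothesis $n_+=n_-$. The additional remark about Matsushima's theorem and the automatic normalization $\int_M f_X\,\vol=0$ is a welcome clarification of why ``$\Delta f_X=f_X$'' really means ``$f_X$ is a Killing potential.''
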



As an analogue of \cite[Lem.~6.12, (iii)]{NS23} we have

\begin{lem}
\label{constant}
For any Killing vector field $X$, the function given by
\[ |X|^2 + \frac{1}{2n_+n_-} z_X^2 + \frac{n}{2n_+}f^+_X + \frac{n}{2n_-}f^-_X \]
is constant on $M$.
\end{lem}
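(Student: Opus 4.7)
The plan is to show that $dF \equiv 0$ where $F$ denotes the displayed function; by connectedness of $M$ this implies the claim. I would differentiate each of the four summands using identities already at hand. The Killing condition $\nabla_Y X = \tfrac{1}{2}dX(Y)$ gives $d|X|^2 = -X\intprod dX$; since $dz_X = X\intprod\omega$, we have $d(z_X^2) = 2z_X \cdot X\intprod\omega$; and Lemma~\ref{ddX}(iii) yields $df^\pm_X = \frac{2n_\mp}{n}X\intprod(dX)_\pm$. Collecting the four contributions with the prescribed coefficients,
\[
dF = -X\intprod dX + \frac{z_X}{n_+n_-}X\intprod\omega + \frac{n_-}{n_+}X\intprod(dX)_+ + \frac{n_+}{n_-}X\intprod(dX)_-.
\]

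The decisive step is then to apply Lemma~\ref{dXprim}, which yields the decomposition $X\intprod dX = X\intprod(dX)_+ + X\intprod(dX)_- + \frac{z_X}{n_+n_-}X\intprod\omega$. Substituting, the $\omega$-contributions cancel and $dF$ reduces to
\[
\frac{n_- - n_+}{n_+}X\intprod(dX)_+ + \frac{n_+ - n_-}{n_-}X\intprod(dX)_-,
\]
which vanishes in the equal-rank case $n_+ = n_-$ that is the sole concern of this section (the nonvanishing of $\nu_\pm$ in the case $n_+ \neq n_-$ having already been deduced from Corollary~\ref{munu}(ii) together with the nonvanishing of $\mu$).

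The anticipated obstacle is purely bookkeeping rather than conceptual: the coefficients of $f^+_X$ and $f^-_X$ appearing in $F$ must precisely match the factors $\tfrac{2n_\mp}{n}$ of Lemma~\ref{ddX}(iii) and the normalization $\tfrac{z_X}{n_+n_-}$ of Lemma~\ref{dXprim} for the three types of contributions (namely $X\intprod\omega$, $X\intprod(dX)_+$, and $X\intprod(dX)_-$) to cancel in turn. No curvature computation or representation-theoretic input beyond what is already collected in \S\ref{sec:fibrewise} and \S\ref{sec:diff} is required.
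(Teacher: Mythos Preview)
Your approach is exactly the paper's: differentiate, invoke Lemma~\ref{dXprim} and Lemma~\ref{ddX}(iii), and observe the cancellation. Two small points. First, your sign $d|X|^2=-X\intprod dX$ is the correct one (the paper's sketch writes $d|X|^2=X\intprod dX$, which is a slip: for Killing $X$ one has $d|X|^2(Y)=2g(\nabla_YX,X)=dX(Y,X)=-(X\intprod dX)(Y)$). Second, you rightly notice that with the coefficients as printed the differential reduces to $(n_--n_+)\bigl(\tfrac{1}{n_+}X\intprod(dX)_+-\tfrac{1}{n_-}X\intprod(dX)_-\bigr)$, which vanishes only for $n_+=n_-$; since Corollary~\ref{nukillingpot} is the sole application and it assumes $n_+=n_-$, this is harmless. (Swapping the last two coefficients to $\tfrac{n}{2n_-}f^+_X+\tfrac{n}{2n_+}f^-_X$ would make the statement hold for all $n_\pm$, consistent with the cited analogue in \cite{NS23}.)
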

\begin{proof}
 This follows immediately by taking the differential, applying Lemma~\ref{dXprim} and Lemma~\ref{ddX} (iii), and noting that $d|X|^2=X\intprod dX$.
\end{proof}

\begin{kor}
\label{nukillingpot}
If  $n_+ = n_-$, then for any two Killing vector fields $X,Y$ we have
\[ \int_M f_X z_Y \vol = \pm2 \nu_\pm(X,X,Y). \]
\end{kor}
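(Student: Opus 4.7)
The plan is to reduce everything to the polarized forms of $\nu_\pm$ and then use the two relations in Corollary~\ref{munu} to collapse the expression in the case $n_+ = n_-$.

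First I would expand the integrand using the definition of $f_X$, namely $f_X = f_X^+ - f_X^- = |(dX)_+|^2 - |(dX)_-|^2$. Then by Remark~\ref{polar}, each of the two resulting integrals is precisely a partially polarized $\nu_\pm$:
\[
\int_M f_X z_Y \vol \;=\; \int_M |(dX)_+|^2 z_Y \vol - \int_M |(dX)_-|^2 z_Y \vol \;=\; \nu_+(X,X,Y) - \nu_-(X,X,Y).
\]

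Next I would invoke Corollary~\ref{munu}. Part (ii) applied in the hypothesis $n_+=n_-$ forces $\mu \equiv 0$, and substituting this into part (i) yields the polynomial identity $\nu_+ + \nu_- = 0$ on $\isom(M,g)$. Because $\nu_\pm$ are cubic polynomials, this identity polarizes to the symmetric trilinear forms, giving $\nu_+(X,X,Y) + \nu_-(X,X,Y) = 0$ for all Killing fields $X,Y$. Therefore $\nu_-(X,X,Y) = -\nu_+(X,X,Y)$, and substitution into the displayed equation above produces
\[
\int_M f_X z_Y \vol \;=\; 2\nu_+(X,X,Y) \;=\; -2\nu_-(X,X,Y),
\]
which is the claim with the $\pm$ signs correlated as stated.

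There is essentially no hard step here once Remark~\ref{polar} and Corollary~\ref{munu} are in hand; the only point that requires a brief justification is that the polynomial identity $\nu_+ + \nu_- = 0$ carries over to its partial polarization, which is immediate from the fact that polarization is a linear operation on $\Sym^3\g^\ast$. The content of the corollary lies entirely in the earlier computations; this is purely a bookkeeping step used in \S\ref{sec:nu} to later produce a nonvanishing Killing potential pairing and thereby establish Lemma~\ref{nu}.
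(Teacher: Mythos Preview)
Your argument is correct. Both your proof and the paper's arrive at the key relation $\nu_+(X,X,Y)+\nu_-(X,X,Y)=0$ and then finish identically, but the routes differ: the paper derives this by multiplying the constant function of Lemma~\ref{constant} against $z_Y$, integrating, and using $\int_M z_Y\,\vol=0$ together with Lemma~\ref{cubic1}(i) and Corollary~\ref{munu}(ii) to kill the $|X|^2$ and $z_X^2$ terms; you instead pull the relation directly from Corollary~\ref{munu}(i) after setting $\mu=0$ via part~(ii), and then polarize. Your route is more economical and in fact renders Lemma~\ref{constant} unnecessary for this corollary, whereas the paper's route has the minor conceptual advantage of exhibiting the relation as a pointwise identity (a constant function) rather than only as an integrated one.
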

\begin{proof}
Recall that since $z_Y$ is a Killing potential, $\int_M z_Y \vol = 0$. Multiplying the expression in Lemma~\ref{constant} with $z_Y$ and integrating, we thus obtain
\[\int_M\left(|X|^2z_Y+\frac{1}{2n_+^2}z_X^2z_Y+f_X^+z_Y+f_X^-z_Y\right)\vol=0.\]
Also note that the terms $\int_M|X|^2z_Y\vol$ and $\int_Mz_X^2z_Y\vol$ vanish as a consequence of Lemma~\ref{cubic1} (i) and Corollary~\ref{munu} (ii), since the latter is just the polarized form $\mu(X,X,Y)$. It follows that
\[\int_Mf^+_Xz_Y\vol+\int_Mf^-_Xz_Y\vol=0\]
With Remark~\ref{polar}, we have $\int_Mf^\pm z_Y\vol=\nu_\pm(X,X,Y)$ and the assertion follows.
\end{proof}

\begin{bem}
 The above corollary amounts to the fact that the polynomials $\nu_\pm$ only differ by a sign when $n_+=n_-$. The geometric reason is the existence of an isometric antiholomorphic involution between the Grassmannians
 \[\frac{\SU(n)}{\rmS(\U(n_+)\times\U(n_-))}\longrightarrow\frac{\SU(n)}{\rmS(\U(n_-)\times\U(n_+))}\]
 that maps any $n_+$-plane in $\C^n$ to its orthogonal complement. For $n_+=n_-$ this involution interchanges the isomorphic bundles $\bbE_\pm$ and thus the polynomials $\nu_\pm$ up to a sign change due to the orientation reversal.
\end{bem}

We finish now the proof of Lemma~\ref{nu}. Let $X$ be Killing vector field such that $\nu_\pm(X,X,Y)=0$ for all Killing vectors fields $Y$. By Corollary~\ref{nukillingpot}, this is equivalent to $\Ltwoinprod{f_X}{z_Y}=0$, hence $f_X=0$, since $f_X$ and $z_Y$ are both Killing potentials. But $f_X=0$ is equivalent to $ |(dX)_+|^2 =  |(dX)_-|^2$ everywhere. We may conclude similarly to the proof of \cite[Prop.~6.16]{NS23}: of course, there are Killing vector fields with $|(dX)_+|^2 \neq |(dX)_-|^2$ at some point. For example, the fundamental vector field $\tilde X$ generated by some nonzero matrix $X\in\su(n_+)\subset\su(n)$ has $((d\tilde X)_o)_+\neq0$, but $((d\tilde X)_o)_-=0$ at the identity coset.


This shows that the cubic forms $\nu_\pm$ cannot vanish identically.

\begin{bem}
An alternative, algebraic proof of Lemma~\ref{nu} for general $n_+,n_-$ goes as follows. Consider the (non-invariant) polynomial $\nu_\pm'$ on $\isom(M,g)$ given by evaluating the integrand of $\nu_\pm$ at the identity coset, i.e.
\[\nu_\pm'(X):=|((dX)_o)_\pm|^2z_{X}(o),\qquad X\in\isom(M,g)\]
The inner product on $\bbE_\pm$ pulled back to $\su(n_\pm)\subset\k$ is some multiple of the trace form on $\su(n_\pm)$, while the Killing potential $z_X(o)$ is a constant multiple of $\langle\xi,X\rangle$, cf.~\cite[\S2.3]{HMW}, where we may choose the generator $\xi$ of $\su(n)^K$ to be
\[\xi:=\i\diag(n_-,\overset{n_+\text{ times}}{\ldots},n_-,-n_+,\overset{n_-\text{ times}}{\ldots},-n_+).\]
Thus, under the identification $\isom(M,g)\cong\g=\su(n)$, the polynomial $\nu_\pm'$ corresponds to a nonzero multiple of the cubic form $P_1\in\Sym^3\g^\ast$ given by
\begin{align*}
 P_1(X)&:=\tr(X_{\su(n_\pm)}^2)\tr(\xi X)\\
 &=\i n\left(\tr(X_{\u(n_\pm)}^2)-\frac{2n_\pm-1}{n_\pm^2}(\tr X_{\u(n_\pm)})^2\right)\tr X_{\u(n_\pm)},\qquad X\in\su(n).
\end{align*}
The key idea is now that integrating $\nu_\pm'$ over $M$ to the invariant polynomial $\nu_\pm$ amounts to projecting $P_1$ to the $G$-invariant part $(\Sym^3\g^\ast)^G$, which is by Proposition~\ref{invcubic} spanned by
\[P_0(X):=\i\tr(X^3),\qquad X\in\su(n).\]
In order to prove the nonvanishing of $\nu_\pm$ it thus suffices to show that $P_0$ and $P_1$ are not orthogonal with respect to the invariant inner product on $\Sym^3\g^\ast$ given by
\[\langle a_1a_2a_3,b_1b_2b_3\rangle=\sum_{\sigma\in\mathfrak{S}_3}\prod_{i=1}^3\langle a_i,b_{\sigma(i)}\rangle ,\qquad a_i,b_i\in\g^\ast.\]
To do that it is convenient to complex-linearly extend $P_0$ and $P_1$ to cubic forms on $\sl(n,\C)$, and to express them using the standard basis $(x_{ab})_{a,b=1}^n$ of $\gl(n,\C)^\ast$. We remark that restricted to $\sl(n,\C)$, these linear forms satisfy
\[\langle x_{ab},x_{cd}\rangle_\sl=\delta_{ac}\delta_{bd}-\frac{1}{n}\delta_{ab}\delta_{cd}.\]
The rest is a lenghty but elementary calculation.
\end{bem}

\section{The second order obstruction to integrability}
\label{sec:obstruction}


Recall from Corollary~\ref{killingparam} that the map $X\mapsto e_XJ$ from Killing vector fields to infinitesimal Einstein deformations constructed in \S\ref{sec:ied} is an isomorphism, and in particular $\varepsilon(g)=\varepsilon^+(g)$. This enables us to apply Proposition~\ref{intkaehler}. Plugging the parametrization above into the second order obstruction polynomial \eqref{psikaehler} and noting that $E=\frac{1}{2}$ by Proposition~\ref{einsteinconst}, we obtain the expression
\begin{equation}
\Psi(X)=6\Ltwoinprod{\omega\wedge de_X}{e_X\wedge de_X}-4\Ltwoinprod{e_X\wedge e_X}{e_X\wedge\omega}.
\label{psi}
\end{equation}
Using the preceding results, the goal of this section is to express the polynomial $\Psi$ on $\isom(M,g)$ in terms of the polynomials $\mu,\nu_\pm$ defined in \S\ref{sec:killingpol} and finally to show that under the idenfitication $\g\cong\isom(M,g)$, $\Psi$ corresponds to a nonzero multiple of the invariant cubic form $P_0$ that spans $(\Sym^3\g^\ast)^G$ (Proposition~\ref{invcubic}). In order to do that, we are going to analyze the two terms in \eqref{psi} separately.

\subsection{The first term}
\label{sec:firstterm}

Using Theorem~\ref{deX} (i), the integrand in the first term of \eqref{psi} is given by
\begin{align*}
\langle\omega\wedge de_X,e_X\wedge de_X\rangle=\langle\omega\wedge(X\intprod\Omega),e_X\wedge(X\intprod\Omega)\rangle
\end{align*}
Setting $h:=e_XJ$, an elementary calculation using that $e_X$ is primitive by construction and $\Omega$ is primitive by Lemma~\ref{Oprim} shows that
\[h_\ast(X\intprod\Omega)=\omega\intprod(e_X\wedge(X\intprod\Omega))\]
and thus the integrand my be rewritten as
\[\langle\omega\wedge de_X,e_X\wedge de_X\rangle=\langle h_\ast(X\intprod\Omega),X\intprod\Omega\rangle.\]
Substituting in the definitions of $e_X$ and $\Omega$ and expanding the sum, we are faced with terms of the following type.

\begin{lem}
\label{ugly}
For any $v\in TM$ and $h\in\Sym^2TM$ such that $hJ\in\bbE_\pm$, we have
\begin{enumerate}[\upshape(i)]
 \item $\displaystyle\langle h_\ast(v\intprod\Omega_\pm),v\intprod\Omega_\pm\rangle=4\frac{n_\pm^3n_\mp-n_\pm^2-5n_\pm n_\mp-3}{n_+^2n_-^2}h(v,v)$,
 \item $\displaystyle\langle h_\ast(v\intprod\Omega_\mp),v\intprod\Omega_\mp\rangle=4\frac{n_\pm^3n_\mp+3n_\pm^2-n_\pm n_\mp-3}{n_+^2n_-^2}h(v,v)$,
 \item $\displaystyle\langle h_\ast(v\intprod\Omega_\pm),v\intprod\Omega_\mp\rangle=\langle h_\ast(v\intprod\Omega_\mp),v\intprod\Omega_\pm\rangle=-4\frac{n_+^2n_-^2-n_\pm^2-3n_\mp^2+3}{n_+^2n_-^2}h(v,v)$.
\end{enumerate}
\end{lem}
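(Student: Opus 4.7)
The plan is a direct fibrewise expansion in the frames $(\omega_a^\pm)$ of $\bbE_\pm$. Starting from $v\intprod\Omega_\pm=2\sum_a(v\intprod\omega_a^\pm)\wedge\omega_a^\pm$ and the fact that $h_\ast$ acts as a derivation with $h_\ast\omega_a^\pm=\{h,\omega_a^\pm\}$ (since $h$ is symmetric, cf.~\S\ref{sec:commanticomm}), one writes
\[h_\ast(v\intprod\Omega_\pm)=2\sum_a\bigl[h\omega_a^\pm v\wedge\omega_a^\pm+\omega_a^\pm v\wedge\{h,\omega_a^\pm\}\bigr],\]
where vectors and $1$-forms are tacitly identified via the metric. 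Pairing this with $v\intprod\Omega_{\pm'}=2\sum_b\omega_b^{\pm'}v\wedge\omega_b^{\pm'}$ and applying the standard adjoint identity
\[\langle X\wedge\alpha,Y\wedge\beta\rangle=\langle X,Y\rangle\langle\alpha,\beta\rangle-\langle Y\intprod\alpha,X\intprod\beta\rangle\]
(for vectors $X,Y$ and $2$-forms $\alpha,\beta$) reduces each of (i)--(iii) to a finite sum of traces of products of $h$, $v$, and the frame elements.

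Each of these sums can then be evaluated with the tools already developed. Sums of the form $\sum_a\omega_a^\pm A\omega_a^\pm$ are precisely $C_\pm(A)$, and Lemma~\ref{Cendo} supplies their values on each of the invariant subbundles $\bbE_\pm$, $\bbE_\mp$, $\bbF$ and $\R\omega$. Sums of the type $\alpha\intprod\Omega_\pm$ and $\sum_i e_i\wedge(e_i\wedge v)_\pm$ are handled by Lemmas~\ref{inOp} and \ref{XinOp}. The positions of the anticommutators are pinned down by Corollaries~\ref{anticomm1} and \ref{anticomm2}: since $hJ\in\bbE_\pm$, one has $\{h,\omega_a^\pm\}\in\bbE_\pm\oplus\R\omega$ while $\{h,\omega_a^\mp\}\in\bbF$, so many off-diagonal inner products vanish outright and the remaining ones can be expressed again in terms of $C_\pm$ applied to $h$-composed endomorphisms.

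The main obstacle is purely combinatorial: roughly a dozen sums of the above type have to be simplified and combined, and the resulting rational functions of $n_+$ and $n_-$ assembled into the coefficients appearing in (i)--(iii). As an independent consistency check --- and an alternative route that bypasses most of the calculation --- one observes that the map $(v,hJ)\mapsto\langle h_\ast(v\intprod\Omega_{\pm}),v\intprod\Omega_{\pm'}\rangle$ defines an element of $\Hom_K(\bbE_\pm,\Sym^2\m^\ast)$. Decomposing $\Sym^2\m^\ast$ into $K$-irreducibles using $\m\cong\C^{n_+}\otimes(\C^{n_-})^\ast$ and applying Schur's lemma shows that this Hom space is one-dimensional, spanned precisely by $hJ\mapsto(v\mapsto h(v,v))$; hence each identity is determined by its value at any single convenient pair, which one may take for instance with $v$ a highest weight vector of $\m$ and $hJ$ the corresponding root element of $\bbE_\pm$, providing a fast numerical verification of the three coefficients.
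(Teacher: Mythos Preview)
Your main approach is essentially the one in the paper: expand $h_\ast(v\intprod\Omega_\pm)$ in the local frames $(\omega_a^\pm)$, pair with $v\intprod\Omega_{\pm'}$, and reduce all occurring sums to instances of $C_\pm$ via Lemma~\ref{Cendo}, Lemma~\ref{inOp}, and the anticommutator Corollaries~\ref{anticomm1}--\ref{anticomm2}. Your two-term expansion of $h_\ast(v\intprod\Omega_\pm)$ is in fact the paper's three-term expression $v\intprod h_\ast\Omega_\pm-h(v)\intprod\Omega_\pm$ after the obvious cancellation, and from there the paper proceeds exactly as you outline, unwinding $\sum_a\omega_a^\pm C_{\pm'}(\{h,\omega_a^\pm\})$ using equivariance of $C_{\pm'}$ and the Casimir values of Corollary~\ref{casvalues}.

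Your closing Schur's lemma observation is a genuine addition not present in the paper and is correct: the decomposition $\Sym^2\m^\ast\cong\Sym^+\m^\ast\oplus\Sym^-\m^\ast$ together with $\Sym^+\m^\ast\cong\Lambda^{1,1}\m$ shows that $\su(n_\pm)$ occurs with multiplicity one in $\Sym^2\m^\ast$, so each of the three quantities is automatically a scalar multiple of $h(v,v)$, and a single evaluation fixes the coefficient. This bypasses most of the combinatorics at the cost of one explicit numerical check per case; the paper instead carries out the full frame calculation, which has the advantage of being self-contained and of exhibiting how the individual terms conspire.
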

\begin{proof}
 Suppose that $hJ\in\bbE_+\oplus\bbE_-$. First, we note that since $h_\ast$ is a derivation,
 \begin{align*}
 h_\ast(v\intprod\Omega_\pm)&=v\intprod h_\ast\Omega_\pm-h(v)\intprod\Omega_\pm,\\
 &=2\sum_a\left((v\intprod\{h,\omega_a^\pm\})\wedge\omega_a^\pm+(v\intprod\omega_a^\pm)\wedge\{h,\omega_a^\pm\}-(h(v)\intprod\omega_a^\pm)\wedge\omega_a^\pm\right).
 \end{align*}
 It follows that
 \begin{equation}
  \langle h_\ast(v\intprod\Omega_\pm),v\intprod\Omega_\pm\rangle=-2\sum_a\left\langle(\omega_a^\pm\intprod\Omega_\pm)h\omega_a^\pm v+\omega_a^\pm(\{h,\omega_a^\pm\}\intprod\Omega_\pm)v,v\right\rangle.
  \label{firststep}
 \end{equation}
 By virtue of  Lemma~\ref{Cendo} and Lemma~\ref{inOp}, we have
 \[\omega_a^\pm\intprod\Omega_\pm=\frac{2(n_+n_-+1)}{n_+n_-}\omega_a^\pm,\qquad\{h,\omega_a^\pm\}\intprod\Omega_\pm=2\{h,\omega_a^\pm\}_\pm+2C_\pm(\{h,\omega_a^\pm\})\]
 and we note that Corollary~\ref{anticomm1} and Corollary~\ref{anticomm2} imply
 \[\{h,\omega_a^\pm\}_\pm=\begin{cases}
                           \{h,\omega_a^\pm\}-\frac{2}{n_+n_-}\langle hJ,\omega_a^\pm\rangle\omega,&h\in\bbE_\pm J,\\
                           0,&h\in\bbE_\mp J.
                          \end{cases}
 \]
 We substitute this into \eqref{firststep} and continue analyzing the occurring terms. First,
 \begin{align*}
  \sum_a(\omega_a^\pm\intprod\Omega_\pm)h\omega_a^\pm&=\frac{2(n_+n_-+1)}{n_+n_-}C_\pm(h).
 \end{align*}
 Second, using the equivariance of $C_\pm$ from Lemma~\ref{Cendo} together with Lemma~\ref{casvalues},
 \begin{align*}
  \sum_a\omega_a^\pm C_\pm(\{h,\omega_a^\pm\})&=\sum_{a,b}\omega_a^\pm\omega_b^\pm(h\omega_a^\pm+\omega_a^\pm h)\omega_b^\pm\\
  &=\sum_{a,b}(\omega_a^\pm\omega_b^\pm[h,\omega_a^\pm]\omega_b^\pm+2\omega_a^\pm\omega_b^\pm\omega_a^\pm h\omega_b^\pm)\\
  &=\sum_a(\omega_a^\pm C_\pm([h,\omega_a^\pm])+2C_\pm(\omega_a^\pm)h\omega_a^\pm)\\
  &=\sum_a(\omega_a^\pm[C_\pm(h),\omega_a^\pm]+\tfrac{2}{n_+n_-}\omega_a^\pm h \omega_a^\pm)\\
  &=\sum_a(\omega_a^\pm C_\pm(h)\omega_a^\pm-(\omega_a^\pm)^2C_\pm(h))+\tfrac{2}{n_+n_-}C_\pm(h)\\
  &=C_\pm^2(h)+\Cas^{\su(n_\pm),\langle\cdot,\cdot\rangle_{\Lambda^2}}_\m C_\pm(h)+\tfrac{2}{n_+n_-}C_\pm(h)\\
  &=C_\pm^2(h)+\tfrac{n_\pm^2+1}{n_+n_-}C_\pm(h),
 \end{align*}
 while (in the case of $h\in\bbE_\pm J$)
 \begin{align*}
  \sum_a\omega_a^\pm\{h,\omega_a^\pm\}&=\sum_a(\omega_a^\pm h \omega_a^\pm+(\omega_a^\pm)^2h)\\
  &=C_\pm(h)-\Cas^{\su(n_\pm),\langle\cdot,\cdot\rangle_{\Lambda^2}}_\m h=C_\pm(h)-\frac{n_\pm^2-1}{n_+n_-}h,\\
  \sum_a\langle hJ,\omega_a^\pm\rangle\omega_a^\pm J&=-h.
 \end{align*}
 Putting this together and applying Lemma~\ref{Cendo}, we obtain
 \begin{align*}
  \langle h_\ast(v\intprod\Omega_\pm),v\intprod\Omega_\pm\rangle&=\begin{cases}
                                                                   4\dfrac{n_\pm^3n_\mp-n_\pm^2-5n_\pm n_\mp-3}{n_+^2n_-^2}h(v,v),&h\in\bbE_\pm J,\\
                                                                   4\dfrac{n_\pm^3n_\mp+3n_\pm^2-n_\pm n_\mp-3}{n_+^2n_-^2}h(v,v),&h\in\bbE_\mp J.\\
                                                                  \end{cases}
 \end{align*}
 This proves (i) and (ii). In order to show (iii), we first note that it suffices to analyze $\langle h_\ast(v\intprod\Omega_\pm),v\intprod\Omega_\mp\rangle$ with $h\in\bbE_\pm J$ since $h_\ast$ is self-adjoint. We rerun the calculation above with an $\Omega_\mp$ on the right hand side of the inner product. Again combining Lemma~\ref{Cendo} with Lemma~\ref{inOp} and Corollary~\ref{anticomm2}, we find
 \[\omega_a^\pm\intprod\Omega_\mp=-\frac{2(n_\mp^2-1)}{n_+n_-}\omega_a^\pm,\qquad\{h,\omega_a^\pm\}\intprod\Omega_\mp=2C_\mp(\{h,\omega_a^\pm\}).\]
 As above, we unfold
 \begin{align*}
  \sum_a\omega_a^\pm C_\mp(\{h,\omega_a^\pm\})&=\sum_{a,b}\omega_a^\pm\omega_b^\mp(h\omega_a^\pm+\omega_a^\pm h)\omega_b^\mp\\
  &=\sum_a\omega_a^\pm C_\mp([h,\omega_a^\pm])+2\sum_bC_\pm(\omega_b^\mp)h\omega_b^\mp\\
  &=C_\pm(C_\mp(h))+\Cas^{\su(n_\pm),\langle\cdot,\cdot\rangle_{\Lambda^2}}_\m C_\mp(h)-2\tfrac{n_\pm^2-1}{n_+n_-}C_\mp(h)\\
  &=C_\pm(C_\mp(h))-\tfrac{n_\pm^2-1}{n_+n_-}C_\mp(h)
 \end{align*}
 and combining everything we obtain
 \[\langle h_\ast(v\intprod\Omega_\pm),v\intprod\Omega_\mp\rangle=-4\frac{n_+^2n_-^2-n_\pm^2-3n_\mp^2+3}{n_+^2n_-^2}h(v,v).\]
\end{proof}


Assembling the preceding identities, we now obtain an expression for the first term of the obstruction polynomial \eqref{psi}.

\begin{kor}
\label{firstterm}
The integrand in the first term of \eqref{psi} is
\[\langle\omega\wedge de_X,e_X\wedge de_X\rangle=k_+(dX)_+(JX,X)-k_-(dX)_-(JX,X),\]
where the constants $k_\pm$ are given by
\[k_\pm=\frac{(n_\mp^2-1)^2(n_\pm^3n_\mp+n_\pm^2-5n_+n_-+3)}{n_\pm^2n_\mp^3}.\]
\end{kor}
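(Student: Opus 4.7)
The plan is to substitute the explicit formulas for $e_X$ and $\Omega$ directly into the rewritten integrand $\langle h_\ast(X\intprod\Omega),X\intprod\Omega\rangle$, evaluate each of the resulting terms using Lemma~\ref{ugly}, and collect the coefficients of $(dX)_+(JX,X)$ and $(dX)_-(JX,X)$.

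To set up, I would write $h_\pm := (dX)_\pm J$. Since $(dX)_\pm\in\bbE_\pm$ is skew-symmetric and commutes with $J$, each $h_\pm$ is a symmetric endomorphism in $\bbE_\pm J$, and consequently the derivation $(h_\pm)_\ast$ is self-adjoint on $\Lambda^2 TM$. By Definition~\ref{parameid} and Definition~\ref{defO},
\[
h = e_X J = \frac{n_-^2-1}{n_-}h_+ - \frac{n_+^2-1}{n_+}h_-, \qquad X\intprod\Omega = \frac{n_-^2-1}{2n}(X\intprod\Omega_+) - \frac{n_+^2-1}{2n}(X\intprod\Omega_-).
\]
Bilinearity and self-adjointness expand the integrand into a linear combination of the six scalar quantities $\langle (h_\epsilon)_\ast(X\intprod\Omega_j),X\intprod\Omega_k\rangle$ for $\epsilon\in\{+,-\}$ and unordered pairs $\{j,k\}\subseteq\{+,-\}$ (the mixed-sign term appearing with a factor of $2$). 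Lemma~\ref{ugly} evaluates each of these as an explicit rational multiple of $h_\epsilon(X,X) = (dX)_\epsilon(JX,X)$.

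Collecting the three terms with $\epsilon=+$ gives the coefficient of $(dX)_+(JX,X)$,
\[
\frac{n_-^2-1}{n_-}\left[\frac{(n_-^2-1)^2}{4n^2}A_+ - \frac{(n_-^2-1)(n_+^2-1)}{2n^2}C_+ + \frac{(n_+^2-1)^2}{4n^2}B_+\right],
\]
where $A_+,B_+,C_+$ denote the rational functions of $n_+,n_-$ appearing in Lemma~\ref{ugly}\,(i),(ii),(iii) with the upper sign; the minus sign in the middle term comes from the opposing signs in $X\intprod\Omega$. The coefficient of $(dX)_-(JX,X)$ is computed identically from the terms with $\epsilon=-$ and carries an overall minus sign from the $-\frac{n_+^2-1}{n_+}h_-$ in $h$; it is also obtained from the above expression by the evident symmetry $n_+\leftrightarrow n_-$, which identifies it as $-k_-$.

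The main obstacle is purely algebraic: after placing the bracketed expression above over the common denominator $4n^2n_+^2n_-^2n_-$ and expanding, one must verify that the numerator factors as $(n_-^2-1)^2(n_+^3n_- + n_+^2 - 5n_+n_- + 3)$. This factorization is not transparent a priori and is the step where the symmetric combinations $\frac{n_\mp^2-1}{n_\mp}$ and $\frac{n_\mp^2-1}{2n}$ chosen in Definitions~\ref{parameid} and~\ref{defO} conspire with the constants $A_\pm,B_\pm,C_\pm$ of Lemma~\ref{ugly} to produce the compact closed form claimed for $k_\pm$. The verification is mechanical once the signs $\gamma_+>0>\gamma_-$ are tracked carefully.
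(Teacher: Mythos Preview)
Your approach is correct and essentially the same as the paper's: both substitute the definitions of $e_X$ and $\Omega$, apply Lemma~\ref{ugly} to each of the resulting terms, and collect coefficients. The paper organizes the bookkeeping slightly differently by first computing the intermediate quantity $\langle\omega\wedge de_X,(dX)_\pm\wedge de_X\rangle$ and only then inserting the coefficients from Definition~\ref{parameid}, whereas you expand everything at once; the algebra is identical. Two minor slips: the symbols $\gamma_\pm$ in your last sentence are never defined, and when you place everything over the common denominator $4n^2n_+^2n_-^3$ the numerator will carry an extra factor of $n^2$ (coming from the cancellation with the $(2n)^{-2}$ in $\Omega$) before it reduces to the stated $k_+$.
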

\begin{proof}
Continuing the discussion at the beginning of \S\ref{sec:firstterm}, we may combine the definition of $\Omega$ with Lemma~\ref{ugly} and find
\[\langle\omega\wedge de_X,(dX)_\pm\wedge de_X\rangle=\frac{(n_\mp^2-1)(n_\pm^3n_\mp+n_\pm^2-5n_+n_-+3)}{n_+^2n_-^2}(dX)_\pm(JX,X).\]
Combining this with the definition of $e_X$ yields the assertion.
\end{proof}

Using the identities in Corollary~\ref{munu} and Lemma~\ref{cubic2}, it is possible to express the integral of the above term in terms of the cubic forms $\mu$ and $\nu_+$. We record the following:

\begin{kor}
\label{firstterm2}
The first term of \eqref{psi} is given by
\begin{align*}
 \Ltwoinprod{\omega\wedge de_X}{e_X\wedge de_X}=\begin{dcases}
    \frac{(n_+^2-1)(n_-^2-1)}{n_+^3n_-^3(n_+-n_-)}\ell_1\mu(X),&n_+\neq n_-,\\
    -2\frac{(n_+^2-1)^3(n_+^2-3)}{n_+^5}\nu_+(X),&n_+=n_-,
   \end{dcases}
\end{align*}
with the constant $\ell_1$ given by
\[\ell_1:=n_+^3n_-^3-3n_+^3n_--3n_+n_-^3+n_+^2n_-^2+n_+^2+n_-^2+5n_+n_--3.\]
\end{kor}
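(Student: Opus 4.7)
The plan is to take Corollary~\ref{firstterm} and integrate both sides, then feed the resulting integrals into Lemma~\ref{cubic2}(i) to turn each of $\int_M(dX)_\pm(JX,X)\vol$ into a linear combination of $\nu_\pm(X)$ and $\mu(X)$. Concretely, I would write
\[
\Ltwoinprod{\omega\wedge de_X}{e_X\wedge de_X}
= k_+\!\!\int_M(dX)_+(JX,X)\vol - k_-\!\!\int_M(dX)_-(JX,X)\vol,
\]
and substitute
\[
\int_M(dX)_\pm(JX,X)\vol = -\nu_\pm(X) + \frac{n_\pm^2-1}{2n_\pm n}\mu(X).
\]

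Next I would split into the two cases. When $n_+\neq n_-$, Corollary~\ref{munu}(ii) lets me rewrite $\nu_\pm$ as an explicit multiple of $\mu$; a short simplification yields
\[
\int_M(dX)_\pm(JX,X)\vol = \pm\frac{n_\pm^2-1}{2n_\pm(n_+-n_-)}\mu(X),
\]
so the whole expression becomes
\[
\frac{1}{2(n_+-n_-)}\left(k_+\frac{n_+^2-1}{n_+} + k_-\frac{n_-^2-1}{n_-}\right)\mu(X).
\]
Factoring out $(n_+^2-1)(n_-^2-1)/(n_+^3n_-^3)$ from the coefficient and inserting the definitions of $k_\pm$ from Corollary~\ref{firstterm}, the bracket reduces (after expansion) to $2\ell_1$, giving precisely the stated multiple of $\mu(X)$. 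When $n_+=n_-$, one observes that $k_+=k_-$ by the symmetry of the formula, that $\mu$ vanishes identically (Corollary~\ref{munu}(ii)), and that the two integrals $\int_M(dX)_\pm(JX,X)\vol$ reduce to $-\nu_\pm(X)$; from $\nu_-=-\nu_+$ (Corollary~\ref{nukillingpot}) we obtain $-2k_+\nu_+(X)$, and a direct substitution of $n=2n_+$ into the formula for $k_+$ yields $k_+ = (n_+^2-1)^3(n_+^2-3)/n_+^5$, matching the stated answer.

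The only real obstacle is the bookkeeping: one must verify that the polynomial
\[
(n_-^2-1)(n_+^3n_-+n_+^2-5n_+n_-+3) + (n_+^2-1)(n_+n_-^3+n_-^2-5n_+n_-+3)
\]
expands to $2\ell_1$, and that the common factor $(n_+-n_-)$ in the denominator is in fact present (which it is, because both summands in the previous display vanish when $n_+=n_-$, reflecting the consistency of the two cases). Both checks are straightforward polynomial expansions and do not require any new geometric input beyond the identities already established in Sections~\ref{sec:killingpol} and~\ref{sec:cubicint}.
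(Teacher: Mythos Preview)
Your proposal is correct and follows exactly the route the paper indicates (it merely says ``using the identities in Corollary~\ref{munu} and Lemma~\ref{cubic2}'' and states the result without details). One small remark: your parenthetical claim that ``both summands in the previous display vanish when $n_+=n_-$'' is not accurate---each summand equals $(n_+^2-1)^2(n_+^2-3)$ there---but this side comment plays no role in the argument, since the case $n_+\neq n_-$ is assumed and the case $n_+=n_-$ is handled separately (and for the latter you could equally well cite Corollary~\ref{munu}(i) together with $\mu=0$ instead of Corollary~\ref{nukillingpot}).
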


\subsection{The second term}

The second term in \eqref{psi} does not depend on the derivatives of $e_X$. Thanks to the algebraic preparation in \S\ref{sec:commanticomm}, it is therefore much easier to handle than the first one.

\begin{lem}
\label{tripleprod}
For $\alpha,\beta,\gamma\in\bbE_+\oplus\bbE_-$, we have
\[\langle\alpha\wedge\beta,\gamma\wedge\omega\rangle=\langle\{\alpha,\beta\}J,\gamma\rangle,\]
and this vanishes unless either $\alpha,\beta,\gamma\in\bbE_+$ or $\alpha,\beta,\gamma\in\bbE_-$.
\end{lem}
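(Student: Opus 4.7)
My plan is to first establish the identity $\langle\alpha\wedge\beta,\gamma\wedge\omega\rangle=\langle\{\alpha,\beta\}J,\gamma\rangle$ by a short interior-product computation, and then to read off the vanishing statement from the commutator/anticommutator relations in \S\ref{sec:commanticomm}.

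For the identity, I would use the adjoint relation $\langle\alpha\wedge\beta,\gamma\wedge\omega\rangle=\langle\beta,\alpha\intprod(\gamma\wedge\omega)\rangle$ and expand $\alpha\intprod(\gamma\wedge\omega)$ with Lemma~\ref{wedgein}. Since $\alpha,\beta,\gamma\in\bbE_+\oplus\bbE_-$ are primitive, the terms $\langle\alpha,\omega\rangle\gamma$ and $\langle\beta,\omega\rangle\cdot(\ldots)$ drop out. The remaining contribution simplifies because $\alpha,\gamma\in\Lambda^{1,1}$ commute with $J=\omega$, so that $\gamma\alpha\omega+\omega\alpha\gamma=(\gamma\alpha+\alpha\gamma)J=\{\alpha,\gamma\}J$. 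This yields $\langle\alpha\wedge\beta,\gamma\wedge\omega\rangle=\langle\beta,\{\alpha,\gamma\}J\rangle$. To put the identity into the form stated, I would verify (using the trace form $\langle\cdot,\cdot\rangle_{\Lambda^2}=-\tfrac12\tr(\cdot\,\cdot)$, cyclicity of the trace, and $J\xi=\xi J$ for $\xi\in\{\alpha,\beta,\gamma\}$) that $\langle\beta,\{\alpha,\gamma\}J\rangle=\langle\{\alpha,\beta\}J,\gamma\rangle$; equivalently, the trilinear form $(\alpha,\beta,\gamma)\mapsto\langle\alpha\wedge\beta,\gamma\wedge\omega\rangle$ is totally symmetric, which is already evident from the symmetry of $\alpha\wedge\beta$ in $\alpha,\beta$ combined with the computed identity.

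For the vanishing assertion, I would analyze where $\{\alpha,\beta\}J$ lives in the splitting $\Lambda^{1,1}TM=\R\omega\oplus\bbE_+\oplus\bbE_-\oplus\bbF$, using Corollary~\ref{anticomm1} and Corollary~\ref{anticomm2}. Writing $\{\alpha,\beta\}J=\{\alpha,\beta J\}$ (since $J$ commutes with $\alpha$), one has: if $\alpha,\beta\in\bbE_\pm$, then $\{\alpha,\beta J\}\in\bbE_\pm\oplus\R\omega$ by Corollary~\ref{anticomm1}; if $\alpha\in\bbE_\pm$ and $\beta\in\bbE_\mp$, then $\{\alpha,\beta J\}\in\bbF$ by Corollary~\ref{anticomm2}. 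Pairing with $\gamma\in\bbE_+\oplus\bbE_-$ and using the orthogonality of the four summands, $\langle\{\alpha,\beta\}J,\gamma\rangle$ is nonzero only when $\{\alpha,\beta\}J$ has a component in the same $\bbE_\pm$ that contains $\gamma$. Combining this with the total symmetry in $\alpha,\beta,\gamma$ noted above, the expression vanishes unless all three of $\alpha,\beta,\gamma$ lie in $\bbE_+$ or all three lie in $\bbE_-$.

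The proof is largely bookkeeping; the only subtlety worth double-checking is the trace/cyclicity manipulation that establishes the symmetry between $\langle\beta,\{\alpha,\gamma\}J\rangle$ and $\langle\{\alpha,\beta\}J,\gamma\rangle$, but this is straightforward given that $\alpha,\beta,\gamma$ all commute with $J$.
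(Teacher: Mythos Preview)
Your proof is correct and uses essentially the same ingredients as the paper: Lemma~\ref{wedgein} for the interior-product identity, primitivity and $J$-commutation to simplify, and Corollaries~\ref{anticomm1}--\ref{anticomm2} for the vanishing. The only difference is that the paper contracts $\omega$ directly, computing $\omega\intprod(\alpha\wedge\beta)=\alpha J\beta+\beta J\alpha=\{\alpha,\beta\}J$, which lands on the stated identity in one step and avoids your extra trace-cyclicity argument to pass from $\langle\beta,\{\alpha,\gamma\}J\rangle$ to $\langle\{\alpha,\beta\}J,\gamma\rangle$.
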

\begin{proof}
We begin with noting that
\[\langle\alpha\wedge\beta,\gamma\wedge\omega\rangle=\langle\omega\intprod(\alpha\wedge\beta),\gamma\rangle\]
and since $\alpha,\beta\perp\omega$ and $\alpha,\beta$ commute with $J$, Lemma~\ref{wedgein} implies that
\[\omega\intprod(\alpha\wedge\beta)=\alpha J\beta+\beta J\alpha=\{\alpha,\beta\}J.\]
The vanishing statements now follow from Corollary~\ref{anticomm1} resp.~\ref{anticomm2}.
\end{proof}

Substituting the definition of $e_X$ into $\langle e_X\wedge e_X,e_X\wedge\omega\rangle$, expanding the sum and applying the above lemma immediately yields the following.

\begin{kor}
\label{secondterm}
The integrand in the second term of the obstruction polynomial is
\[\langle e_X\wedge e_X,e_X\wedge\omega\rangle=\frac{2(n_-^2-1)^3}{n_-^3}\langle(dX)_+^2J,(dX)_+\rangle-\frac{2(n_+^2-1)^3}{n_+^3}\langle(dX)_-^2J,(dX)_-\rangle.\]
\end{kor}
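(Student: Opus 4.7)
The plan is to expand the expression trilinearly using Definition~\ref{parameid} and then invoke the vanishing statement in Lemma~\ref{tripleprod} to collapse the sum. Writing $e_X = a_+ (dX)_+ + a_- (dX)_-$ with $a_+ = \frac{n_-^2-1}{n_-}$ and $a_- = -\frac{n_+^2-1}{n_+}$, the trilinearity of $\langle \cdot\wedge\cdot,\cdot\wedge\omega\rangle$ in the three slots produces a sum of eight terms indexed by triples of signs $(\epsilon_1,\epsilon_2,\epsilon_3)\in\{+,-\}^3$, each of the form $a_{\epsilon_1}a_{\epsilon_2}a_{\epsilon_3}\langle(dX)_{\epsilon_1}\wedge(dX)_{\epsilon_2},(dX)_{\epsilon_3}\wedge\omega\rangle$.

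Since $(dX)_\pm$ is a section of $\bbE_\pm$, Lemma~\ref{tripleprod} forces all six mixed-sign terms to vanish, leaving only the two pure contributions with all indices equal to $+$ or all equal to $-$. For these, the anticommutator identity in the same lemma reads
\[\langle(dX)_\pm\wedge(dX)_\pm,(dX)_\pm\wedge\omega\rangle=\langle\{(dX)_\pm,(dX)_\pm\}J,(dX)_\pm\rangle=2\langle(dX)_\pm^2J,(dX)_\pm\rangle.\]
Multiplying by the cubed coefficients $a_\pm^3=\pm\frac{(n_\mp^2-1)^3}{n_\mp^3}$ and adding the two surviving contributions yields exactly the stated formula, with the minus sign in front of the second summand coming from the sign of $a_-$.

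I do not expect any real obstacle here: the argument is essentially algebraic bookkeeping once the selection rule of Lemma~\ref{tripleprod} eliminates six of the eight terms, in marked contrast to the intricate derivative-laden computation needed for the first term (Corollary~\ref{firstterm}). The only thing to double-check is the sign accounting in $a_-^3$, which is what produces the opposite signs of the two summands in the final identity.
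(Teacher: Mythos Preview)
Your argument is correct and follows precisely the approach the paper itself takes: substitute the definition of $e_X$, expand trilinearly, use Lemma~\ref{tripleprod} to kill the six mixed terms and rewrite the two surviving ones via the anticommutator identity. The paper's own proof is a single sentence to this effect, and your write-up simply fills in the bookkeeping.
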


As before, we reduce the integral of this term to multiples of $\mu$ and $\nu_+$ by means of Corollary~\ref{munu} and Lemma~\ref{cubic2}.

\begin{kor}
\label{secondterm2}
The second term of \eqref{psi} is given by
\begin{align*}
 \Ltwoinprod{e(X)\wedge e(X)}{e(X)\wedge\omega}=\begin{dcases}
    \frac{(n_+^2-1)(n_-^2-1)}{n_+^3n_-^3(n_+-n_-)}\ell_2\mu(X),&n_+\neq n_-,\\
    -2\frac{(n_+^2-1)^3(n_+^2-4)}{n_+^5}\nu_+(X),&n_+=n_-,
   \end{dcases}
\end{align*}
with the constant $\ell_2$ given by
\[\ell_2:=n_+^3n_-^3-4n_+^3n_--4n_+n_-^3+2n_+^2n_-^2+n_+^2+n_-^2+7n_+n_--4.\]
\end{kor}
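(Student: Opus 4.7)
The strategy is to combine the pointwise identity from Corollary~\ref{secondterm} with the cubic integral identities already established, and then reduce the two separate $\nu_\pm$ contributions to either a single multiple of $\mu$ (in the case $n_+\neq n_-$) or of $\nu_+$ (in the case $n_+=n_-$), using Corollary~\ref{munu} respectively Corollary~\ref{nukillingpot}.

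Concretely, I would first integrate the expression
\[\langle e_X\wedge e_X,\,e_X\wedge\omega\rangle=\frac{2(n_-^2-1)^3}{n_-^3}\langle(dX)_+^2J,(dX)_+\rangle-\frac{2(n_+^2-1)^3}{n_+^3}\langle(dX)_-^2J,(dX)_-\rangle\]
from Corollary~\ref{secondterm} termwise over $M$, substituting in the two values
\[\int_M\langle(dX)_\pm^2J,(dX)_\pm\rangle\vol=\frac{-n_\pm^2n_\mp+n_\pm+3n_\mp}{n_+n_-n}\,\nu_\pm(X)+\frac{(n_\pm^2-1)(n_\pm^2-2)}{2n_\pm^2n^2}\,\mu(X)\]
from Lemma~\ref{cubic2} (ii). This produces an expression of the shape $A_+\nu_+(X)+A_-\nu_-(X)+B\,\mu(X)$ with explicit rational coefficients $A_\pm,B$ in $n_+,n_-$.

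In the case $n_+\neq n_-$, I would use Corollary~\ref{munu} (ii) to eliminate $\nu_\pm$ in favor of $\mu$ via $\nu_\pm=\tfrac{n_\mp(n_\pm^2-1)}{n_\pm n(n_\mp-n_\pm)}\mu$, and then collect everything into a single fraction. The main (purely computational) obstacle is verifying that the resulting numerator telescopes precisely to $(n_+^2-1)(n_-^2-1)\,\ell_2$ with $\ell_2=n_+^3n_-^3-4n_+^3n_--4n_+n_-^3+2n_+^2n_-^2+n_+^2+n_-^2+7n_+n_--4$; this is a matter of polynomial bookkeeping, best carried out by grouping the $\nu_+$- and $\nu_-$-contributions symmetrically and then adding the $B\mu$ term.

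In the case $n_+=n_-$, Corollary~\ref{munu} (ii) gives $\mu\equiv 0$, and Corollary~\ref{nukillingpot} gives $\nu_-=-\nu_+$, so the expression collapses to $(A_+-A_-)\nu_+(X)$. Setting $n_+=n_-$ in the coefficient $\tfrac{-n_\pm^2n_\mp+n_\pm+3n_\mp}{n_+n_-n}$ yields $\tfrac{-n_+^2+4}{2n_+^2}$ for both $\pm$, so $A_+-A_-$ equals $\tfrac{2(n_+^2-1)^3}{n_+^3}\cdot\bigl(-\tfrac{n_+^2-4}{n_+^2}\bigr)\cdot 2$ times the factor $\tfrac12$ coming from combining $\nu_+$ and $-\nu_-$; simplifying produces exactly $-2\tfrac{(n_+^2-1)^3(n_+^2-4)}{n_+^5}\nu_+(X)$, as claimed. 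One sanity check is that this equal-rank formula must coincide with the $n_-\to n_+$ limit of the $n_+\neq n_-$ formula after using $\mu=(n_+-n_-)\cdot(\text{regular factor})\cdot\nu_+$ from Corollary~\ref{munu} (ii); this provides a useful consistency test on the constant $\ell_2$ without redoing the full polynomial expansion.
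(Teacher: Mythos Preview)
Your proposal is correct and follows essentially the same route as the paper: integrate the pointwise identity of Corollary~\ref{secondterm} via Lemma~\ref{cubic2}~(ii), then reduce to $\mu$ (resp.~$\nu_+$) using Corollary~\ref{munu}. The only cosmetic difference is that for $n_+=n_-$ you invoke Corollary~\ref{nukillingpot} to obtain $\nu_-=-\nu_+$, whereas the paper gets this directly from Corollary~\ref{munu}~(i) combined with $\mu=0$; either route is fine.
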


\subsection{The full obstruction polynomial}

Using the expressions in Corollary~\ref{firstterm2} and Corollary~\ref{secondterm2} for the individual terms of \eqref{psi}, it is now a straightforward matter of calculation to reduce $\Psi$ itself to a multiple of $\mu$ (if $n_+\neq n_-$) or $\nu_+$ (if $n_+=n_-=\frac{n}{2}$), respectively. We obtain the following result.

\begin{satz}
\label{psiexpression}
The obstruction polynomial $\Psi$ may be expressed as
\[\Psi=\begin{dcases}
\frac{2(n_+^2-1)^2(n_-^2-1)^2(n_+n_--1)}{n_+^3n_-^3(n_+-n_-)}\mu&\text{if }n_+\neq n_-,\\
-\frac{(n^2-4)^4}{2n^5}\nu_+&\text{if }n_+=n_-.
\end{dcases}\]
\end{satz}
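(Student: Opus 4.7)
The proof is essentially a bookkeeping exercise that combines the two corollaries of the previous subsections with the expression \eqref{psi} for $\Psi$. Since Corollary~\ref{firstterm2} and Corollary~\ref{secondterm2} already reduce each of the two $L^2$-integrals appearing in \eqref{psi} to a scalar multiple of $\mu$ (in the case $n_+\neq n_-$) or of $\nu_+$ (in the case $n_+=n_-$), the plan is simply to substitute and simplify.

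In the case $n_+\neq n_-$, the substitution yields
\[
\Psi = \frac{(n_+^2-1)(n_-^2-1)}{n_+^3 n_-^3 (n_+-n_-)}\bigl(6\ell_1 - 4\ell_2\bigr)\,\mu,
\]
so the main step is to verify the polynomial identity
\[
6\ell_1 - 4\ell_2 = 2(n_+^2-1)(n_-^2-1)(n_+n_- - 1).
\]
I would expand both sides explicitly in the two variables $n_+,n_-$: on the left one gets $2n_+^3n_-^3 - 2n_+^3 n_- - 2n_+ n_-^3 - 2n_+^2n_-^2 + 2n_+^2 + 2n_-^2 + 2n_+n_- - 2$, and on the right one computes $(n_+^2 n_-^2 - n_+^2 - n_-^2 + 1)(n_+n_- - 1)$ and multiplies by $2$, obtaining the same expression term by term. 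This is the only nontrivial algebraic check, and it is routine.

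In the case $n_+ = n_-$, writing $m:=n_+=n_-=n/2$, the substitution gives
\[
\Psi = \frac{(m^2-1)^3}{m^5}\Bigl[-12(m^2-3) + 8(m^2-4)\Bigr]\nu_+ = -\frac{4(m^2-1)^4}{m^5}\nu_+.
\]
It then remains to rewrite this in terms of $n$, using $m^2-1 = (n^2-4)/4$ and $m^5 = n^5/32$, which after cancellation gives exactly $-\frac{(n^2-4)^4}{2n^5}\nu_+$, as claimed.

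The only real obstacle is making sure no sign or coefficient error slips in when collating the many intermediate lemmas feeding into $\ell_1$ and $\ell_2$ (the eigenvalues of $\curvop$, the constants in Lemma~\ref{ugly}, Corollary~\ref{firstterm}, and the polarized identities of Lemma~\ref{cubic2} and Corollary~\ref{munu}). I would double-check the coefficients by specializing to small values such as $n_+=2,\,n_-=3$ (where one can cross-check with the $2$-plane Grassmannian result \cite{NS23}) and by verifying that the $n_+\neq n_-$ formula, specialized formally to $n_+=n_-$ via Corollary~\ref{munu}(ii), is consistent with the $n_+=n_-$ formula up to the fact that $\mu$ vanishes in that limit while $\nu_+$ survives.
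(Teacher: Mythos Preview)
Your proposal is correct and follows exactly the approach of the paper, which simply states that substituting Corollaries~\ref{firstterm2} and \ref{secondterm2} into \eqref{psi} and simplifying is ``a straightforward matter of calculation.'' Your explicit verification of the identity $6\ell_1-4\ell_2=2(n_+^2-1)(n_-^2-1)(n_+n_--1)$ and of the $n_+=n_-$ reduction in fact supplies more detail than the paper itself.
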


Notice that for $n_-=2$ this agrees with the result\footnote{This refers to the updated version of \cite{NS23}, containing the correct formula \eqref{psikaehler}, which is in preparation at the time of writing.} in \cite[Thm.~6.24]{NS23} (up to a scale factor of $\left(\frac{2}{3}\right)^3$ stemming from a different parametrization, and the notation $m=2n_+$).

We recall from \S\ref{sec:killingpol} that the cubic form $\nu_+$ is nonzero, and so is $\mu$ provided $n_+\neq n_-$. Thus we conclude as follows.

\begin{kor}
\label{psinonzero}
The obstruction polynomial $\Psi$ is nonzero for $n_+,n_-\geq2$.
\end{kor}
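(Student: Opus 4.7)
The plan is to read off the conclusion directly from Theorem~\ref{psiexpression} by verifying separately, in the two cases $n_+\neq n_-$ and $n_+=n_-$, both that the scalar coefficient appearing there is nonzero and that the relevant cubic form ($\mu$ or $\nu_+$) does not vanish identically.

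For $n_+\neq n_-$, Theorem~\ref{psiexpression} asserts that $\Psi$ is a scalar multiple of $\mu$, with coefficient
\[\frac{2(n_+^2-1)^2(n_-^2-1)^2(n_+n_--1)}{n_+^3n_-^3(n_+-n_-)}.\]
Since $n_+,n_-\geq 2$, each factor $n_\pm^2-1$ is at least $3$ and $n_+n_--1\geq 3$, while $n_+-n_-\neq 0$ by assumption; thus the coefficient is nonzero. The nonvanishing of $\mu$ in this regime was already noted in the remark following the definition of $\mu$ and $\nu_\pm$ in \S\ref{sec:killingpol}, where the result of \cite{HMW} is cited to guarantee $\mu\not\equiv 0$ whenever $n_+\neq n_-$. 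Combining these two observations gives $\Psi\not\equiv 0$ in this case.

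For $n_+=n_-$, write $n=2n_+\geq 4$. Then the coefficient $-\frac{(n^2-4)^4}{2n^5}$ is manifestly nonzero, since $n^2-4\geq 12$. The nonvanishing of $\nu_+$ in this case is precisely the content of Lemma~\ref{nu}, whose proof was carried out in \S\ref{sec:nu} via Corollary~\ref{nukillingpot} and an explicit Killing field generated by a nontrivial element of $\su(n_+)\subset\su(n)$. Together these yield $\Psi\not\equiv 0$.

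Since both cases are exhaustive, the corollary follows. There is essentially no obstacle: all the analytic work is absorbed in Theorem~\ref{psiexpression} and Lemma~\ref{nu}, and the only remaining task is the elementary verification that the closed-form rational prefactors in the two cases are nonzero for $n_+,n_-\geq 2$, which is immediate by inspection of the factorizations.
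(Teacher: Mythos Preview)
Your proposal is correct and follows essentially the same approach as the paper: the corollary is deduced directly from Theorem~\ref{psiexpression} together with the nonvanishing of $\mu$ for $n_+\neq n_-$ (cited from \cite{HMW}) and of $\nu_+$ (Lemma~\ref{nu}), and the elementary observation that the rational prefactors do not vanish for $n_+,n_-\geq 2$.
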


Theorem~\ref{main} is now proved as follows. Since $\Psi$ is an element of $(\Sym^3\g^\ast)^G$, which is spanned by $P_0$ (Proposition~\ref{invcubic}), it must thus be a nonzero multiple of $P_0$. As a consequence, the set of infinitesimal Einstein deformations integrable to second order
corresponds under the parametrization $X\mapsto e_XJ$ to the variety $\quadric\subset\g$ described in Proposition~\ref{variety}. If $n$ is odd, this variety is just the origin -- thus no infinitesimal Einstein deformation is integrable, and $g$ is rigid.

\section*{Acknowledgments}

The first named author acknowledges support by the Procope project no.~\textbf{48959TL} and by the BRIDGES project funded by ANR grant no.~\textbf{ANR-21-CE40-0017}. The second named author was supported by the Special Priority Program \textbf{SPP 2026 ``Geo\-me\-try at Infinity''} funded by the DFG.

We would like to express our thanks to Stuart J.~Hall and Klaus Kröncke for helpful exchanges, and to Paul-Andi Nagy for his suggestions and for inspiring this work.

\clearpage

\end{document}